\newcommand{\Wip}{\mathrm{A}_+^1}
\newcommand{\vanish}[1]{\relax}
\newcommand{\N}{\mathbb{N}}
\newcommand{\R}{\mathbb{R}}
\newcommand{\C}{\mathbb{C}}
\newcommand{\ud}{\mathrm{d}}
\newcommand{\Sum}[2][\relax]{%
 \ifx#1\relax \sideset{}{_{#2}}\sum
 \else \sideset{}{^{#1}_{#2}}\sum
 \fi}
\DeclareMathOperator{\Sect}{Sect}
\DeclareMathOperator{\dom}{dom}
\DeclareMathOperator{\ran}{ran}
\newcommand{\cls}[1]{\overline{#1}}
\newcommand{\norm}[2][\relax]{%
   \ifx#1\relax \ensuremath{\left\Vert#2\right\Vert}
   \else \ensuremath{\left\Vert#2\right\Vert_{#1}}
   \fi}
\newcommand{\sprod}[2]{\ensuremath{%
  \setbox0=\hbox{\ensuremath{#2}}
  \dimen@\ht0
  \advance\dimen@ by \dp0
  \left(\left.#1\rule[-\dp0]{0pt}{\dimen@}\,\right|#2\hspace{1pt}\right)}}
\newcounter{aufzi}
\newcounter{aufzii}
\newcounter{aufziii}
 \newtheorem{thm}{Theorem}[section]
 \newtheorem{cor}[thm]{Corollary}
 \newtheorem{lemma}[thm]{Lemma}
 \newtheorem{prop}[thm]{Proposition}
 \theoremstyle{definition}
 \newtheorem{defn}[thm]{Definition}
 \theoremstyle{remark}
 \newtheorem{rem}[thm]{Remark}
\newtheorem{example}[thm]{Example}
\newtheorem{remark}[thm]{Remark}
\numberwithin{equation}{section}
\begin{document}
\title[On subordination of holomorphic semigroups]{On subordination of holomorphic semigroups}

\author{Alexander Gomilko}
\address{Faculty of Mathematics and Computer Science\\
Nicolas Copernicus University\\
ul. Chopina 12/18\\
87-100 Toru\'n, Poland }

\email{gomilko@mat.umk.pl}

\author{Yuri Tomilov}
\address{Institute of Mathematics\\
Polish Academy of Sciences\\
\'Sniadeckich 8\\
00-956 Warszawa, Poland\\
and Faculty of Mathematics and Computer Science\\
Nicolas Copernicus University\\
ul. Chopina 12/18\\
87-100 Toru\'n, Poland
 }

\email{tomilov@mat.umk.pl}

\thanks{This work was  partially supported by the NCN grant
 DEC-2011/03/B/ST1/00407 and by the EU grant  ``AOS'', FP7-PEOPLE-2012-IRSES, No 318910.}

\subjclass{Primary 47A60, 47D03; Secondary 46N30, 26A48}

\keywords{holomorphic $C_0$-semigroup, Bernstein functions,
subordination, functional calculus}

\date{\today}

\begin{abstract}
We prove that  for any Bernstein function $\psi$ the operator
$-\psi(A)$ generates a  holomorphic
$C_0$-semigroup $(e^{-t\psi(A)})_{t \ge 0}$ on a Banach space,
whenever $-A$ does. This answers a question posed by Kishimoto and
Robinson. Moreover, giving a positive answer to a question by
Berg, Boyadzhiev and de Laubenfels,
 we show that $(e^{-t\psi(A)})_{t \ge 0}$ is holomorphic  in the  holomorphy sector of $(e^{-tA})_{t \ge 0},$
and if $(e^{-tA})_{t \ge 0}$ is sectorially bounded in this sector
then $(e^{-t\psi(A)})_{t \ge 0}$ has the same property. We also
obtain new sufficient conditions on $\psi$ in order that, for
every Banach space $X$, the semigroup $(e^{-t\psi(A)})_{t\ge 0}$
on $X$ is holomorphic whenever $(e^{-tA})_{t\ge 0}$ is a bounded
$C_0$-semigroup on $X$. These conditions improve and generalize
well-known results by Carasso-Kato and Fujita.
\end{abstract}

\maketitle
\section{Introduction}

The present paper concerns operator-theoretic and
function-theoretic properties of Bernstein functions and solves
several notable problems which have been left open for some time.

Bernstein functions play a prominent role in probability theory
and operator theory. One of their characterizations, also
important for our purposes, says that a function
$\psi:[0,\infty)\to (0,\infty)$ is Bernstein if and only if there
exists a vaguely continuous semigroup of subprobability measures
$(\mu_t)_{t \ge 0}$ on $[0,\infty)$
  such that
\begin{equation}\label{SubordIntro}
e^{-t\psi(z)}= \int_0^\infty e^{-z s}\,\mu_t(ds), \qquad z\ge 0,
\end{equation}
for all $t \ge 0.$

Let now  $(e^{-tA})_{t \ge 0}$  be a $C_0$-semigroup on a Banach
space $X$ with generator $-A.$ The relation \eqref{SubordIntro}
suggests a way  to define a new bounded $C_0$-semigroup
$(e^{-tB})_{t \ge 0}$ on $X$ in terms of  $(e^{-tA})_{t \ge 0}$
and a Bernstein function $\psi$   as
\begin{equation}\label{subordvec}
e^{-tB}=\int_{0}^{\infty}e^{-sA}\,\mu_t(ds),
\end{equation}
where $(\mu_t)_{t \ge 0}$ is a semigroup of measures given by
\eqref{SubordIntro}. Following \eqref{SubordIntro}, it is natural
to define $\psi(A):=B.$ As it will be revealed in Subsection
\ref{HPfun} below, such a definition of $\psi(A)$ goes far beyond
 formal notation and it respects some rules for operator
functions called functional calculus.

The semigroup $(e^{-t\psi(A)})_{t \ge 0}$ is  subordinated to the
semigroup $(e^{-tA})_{t \ge 0}$ via a subordinator $(\mu_t)_{t \ge
0}.$ The basics of subordination theory was set up by Bochner
\cite{Bo} and Phillips \cite{Phil52}. This approach to
constructing semigroups is motivated by probabilistic
applications, e.g. by the study of L\'evy processes,  but it has
also significant value for PDEs as well.
 As a textbook
example one may mention a classical result of Yosida expressing
$(e^{-tA^\alpha})_{t \ge 0}, \alpha \in (0,1),$ in terms of
$(e^{-tA})_{t \ge 0}$ as in \eqref{subordvec}. The essential
feature of this example is that $C_0$-semigroups
$(e^{-tA^\alpha})_{t \ge 0}$ turn out to be necessarily
holomorphic. This fact stimulated further research on relations
 between functional calculi and Bernstein functions.
Some of them are described below.

An easy consequence of \eqref{subordvec} is that  for a fixed
Bernstein function $\psi$ the mapping
\begin{equation}\label{map}
\mathcal M: -A \mapsto -\psi (A)
\end{equation}
preserves the class of generators of bounded $C_0$-semigroups, and
it is natural to ask whether there are any other important classes
of semigroup generators stable under $\mathcal M.$ In particular,
whether $\mathcal M$ preserves the class of holomorphic
$C_0$-semigroups. The question was originally asked by Kishimoto
and Robinson in \cite[p. 63, Remark]{Rob}. It appeared to be quite
difficult and there have been very few  general results in this
direction  so far.

First, one should probably recall an old and related result due to
Hirsch \cite{HirschInt} saying that $\mathcal M$ maps sectorial
operators into sectorial operators if $\psi$ is a {\it complete}
Bernstein function. (In this case, the definition of $\psi(A)$
relies on certain integral representations involving resolvents,
see Subsection \ref{funHi}). Note that Hirsch's argument does not
give any control over the angles of sectoriality.

A partial answer to the Kishimoto-Robinson question was obtained
in \cite{Laub} where the question was formulated in another form:
whether $\mathcal M$ preserves the class of sectorially bounded
holomorphic $C_0$-semigroups ? It was proved in \cite[Theorem
7.2]{Laub} that for any Bernstein function $\psi$ the operator
$-\psi(A)$ generates a sectorially bounded holomorphic $C_0$-semigroup of
angle $\pi/2$, whenever $-A$ does. Moreover, if $-A$  generates a
sectorially bounded holomorphic $C_0$-semigroup of angle greater than $\pi/4$
then $-\psi(A)$ is the generator of a sectorially bounded
holomorphic $C_0$-semigroup as well, \cite[Proposition 7.4]{Laub}.
However, in the latter case, the relations between the sectors  of
holomorphy of the two semigroups was not made precise in
\cite{Laub}.

An affirmative answer to the Kishimoto-Robinson question for
uniformly convex Banach spaces $X$ was obtained
 in \cite{Mir1}-\cite{Mir3} using Kato-Pazy's characterization of holomorphic $C_0$-semigroups on uniformly convex spaces.
 In fact, a positive answer to the question  in its full generality was also claimed in \cite{Mir2}.
 However, there seem to be an error in the
arguments there (see Remark \ref{gapMir} for more on that), and
moreover the permanence of sectors  and thus the sectorial
boundedness of semigroups has not been addressed  in
\cite{Mir1}-\cite{Mir3}.

Another  class  of  problems related to $\mathcal M$ concerns
 Bernstein functions $\psi$ yielding semigroups $(e^{-t\psi(A)})_{t \ge 0}$ with  better properties
 than the initial semigroup $(e^{-tA})_{t \ge 0},$
  as in Yosida's example with $\psi(z)=z^\alpha,\alpha \in (0,1).$ In particular, it is of value
to know when Bernstein functions transform generators of bounded
$C_0$-semigroups into generators of bounded holomorphic
$C_0$-semigroups. (Here the boundedness of semigroup is assumed
only on the real half-line.) This property of Bernstein functions
will further be referred to as the \emph{improving} property. The
first results on the improving property are due to Carasso and
Kato, \cite{Carasso}. In particular,
 \cite[Theorem 4]{Carasso}
gives a criterion for the improving property of $\psi$ in terms of
the semigroup $(\mu_t)_{t \ge 0}$ corresponding to $\psi$ and also
a necessary condition for that property in terms of $\psi$ itself.
Note that while the characterization for the improving property in
terms of $(\mu_t)_{t \ge 0}$ exists, it can hardly be applied
directly since it is, in general, highly nontrivial to construct
$(\mu_t)_{t \ge 0}$ corresponding to $\psi.$  Thus it is desirable
to have direct characterizations or  conditions for the improving
property of $\psi.$

Certain sufficient conditions for the improving property of $\psi$
were obtained in   \cite{F1}, \cite{Saddi}, \cite{Mir2} and
\cite{Mir4}. Nontrivial  applications of the improving conditions
from \cite{Carasso} and \cite{F1} can be found in \cite{F2},
\cite{C}
and  \cite{Kov}. We note also \cite{Du} where
similar results were obtained in a discrete setting.

Our approach to the two problems on $\mathcal M$ mentioned above
relies on certain extensions of the theory of Bernstein functions
and its applications to operator norm estimates  by means of
functional calculi. Observe that the problems are comparatively
simple if $\psi$ is a \emph{complete} Bernstein function,
\cite{Laub}. Thus it is natural to try to use this partial answer
in a more general setting of  Bernstein functions. Our main idea
relies on comparing a fixed Bernstein function $\psi$ to a
complete Bernstein function $\varphi$ associated to $\psi$ in a
unique way. It appears that the functions $\psi$ and $\varphi$ are
intimately related and the behavior of $\psi$ and its transforms
match in a natural sense the behavior of $\varphi$ and the
corresponding transforms. So our aim is to show that for
appropriate $\lambda$ the ``resolvent'' functions
$(\psi+\lambda)^{-1}$ and $(\varphi+\lambda)^{-1}$ differ by a
summand with good integrability (and other analytic) properties
and then to recast this fact in terms of functional calculi. The
latter step is not however direct and to perform it correctly and
transparently we have to use an interplay between several
well-known calculi. Apart from answering the questions from
\cite{Rob} and \cite{Laub}, another advantage of our approach is
that we have a good control over fine properties of $\psi(A),$
thus deriving the property of permanence of angles under the map
$\mathcal M.$

Our  functional calculus approach leads, in particular,  to the
following statement which is one of the main results in this paper.

\begin{thm}\label{TangleInt}
Let $-A$ be the generator of a bounded holomorphic $C_0$-semigroup
of angle  $\theta \in (0,\pi/2]$ on a Banach space $X.$ Then for
every Bernstein function $\psi$ the  operator $-\psi(A)$ generates
a bounded holomorphic $C_0$-semigroup of angle $\theta$ on $X$ as
well. Moreover, if $-A$ generates a sectorially bounded holomorphic
$C_0$-semigroup of angle $\theta,$ then the same is true for
$-\psi(A).$
\end{thm}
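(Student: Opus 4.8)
The plan is to translate the assertion into resolvent estimates for $\psi(A)$ and then to obtain those estimates by comparing $\psi$ with an associated complete Bernstein function, for which the corresponding estimates are available. Recall that $-A$ generates a bounded holomorphic $C_0$-semigroup of angle $\theta$ precisely when $A$ is sectorial of angle $\omega:=\pi/2-\theta$, that is, $\sigma(A)\subseteq\overline{\Sigma_\omega}$ and $\sup\{\|z(z+A)^{-1}\|: |\arg z|<\pi-\omega'\}<\infty$ for every $\omega'\in(\omega,\pi)$; the semigroup is moreover sectorially bounded of angle $\theta$ exactly when this bound persists at the critical angle $\omega$ itself. Thus both assertions reduce to showing that $\psi(A)$ is sectorial of the \emph{same} angle $\omega$, with the resolvent bound holding for each $\omega'>\omega$ in the first case and at $\omega$ in the second. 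Concretely, for $\lambda$ in a suitable sector I must bound the ``resolvents'' $(\lambda+\psi(A))^{-1}$ uniformly, and the difficulty is that a general sectorial $A$ need not admit a bounded $H^\infty$-calculus, so one cannot simply estimate $(\lambda+\psi(A))^{-1}$ through the sup-norm of the scalar function $z\mapsto(\lambda+\psi(z))^{-1}$.

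To circumvent this I would invoke the complete Bernstein function $\varphi$ associated with $\psi$. For complete Bernstein functions the operator $\varphi(A)$ is governed by a resolvent (Hirsch-type) representation, and the sectoriality of $\varphi(A)$ of angle $\omega$—together with the uniform resolvent bound, in both the bounded and the sectorially bounded regime—follows from the already understood complete case \cite{Laub}. The heart of the matter is therefore the scalar analysis of the difference $(\lambda+\psi(z))^{-1}-(\lambda+\varphi(z))^{-1}$: I would show that, for $\lambda$ ranging over the relevant sector, this difference, as a function of $z$, enjoys good decay and integrability—lying in a Hille-Phillips/Dunford-Riesz admissible class with a representing measure whose total variation is controlled \emph{uniformly} in $\lambda$. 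This is where the tight interrelation between $\psi$ and $\varphi$, and the matching of their transforms emphasized above, is used.

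Granting such a scalar estimate, the transfer to operators proceeds by feeding the difference into the functional calculus of $A$. The delicate point is to reconcile the two descriptions of $\psi(A)$—the one supplied by the subordination formula \eqref{subordvec} (the Hille-Phillips calculus attached to the semigroup $(e^{-sA})_{s\ge0}$) and the one arising from the sectorial calculus of $A$—and to check their compatibility on the functions at hand, so that the identity $(\lambda+\psi(A))^{-1}=(\lambda+\varphi(A))^{-1}+R_\lambda$ holds with $\|R_\lambda\|$ bounded uniformly in $\lambda$. Combining this with the complete-case bound on $(\lambda+\varphi(A))^{-1}$ yields the required resolvent estimate for $\psi(A)$, hence its sectoriality of angle $\omega$ and the holomorphy of $(e^{-t\psi(A)})_{t\ge0}$ of angle $\theta$; carrying the estimate up to the critical angle $\omega$ gives the permanence of sectorial boundedness.

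I expect the main obstacle to be exactly this scalar-to-operator transfer: first, constructing $\varphi$ and establishing that $(\lambda+\psi)^{-1}-(\lambda+\varphi)^{-1}$ has the right integrability with constants uniform as $\lambda$ traverses the sector and as $\omega'\downarrow\omega$; and second, verifying the compatibility of the subordination calculus with the sectorial calculus so that the scalar bound genuinely controls $\|R_\lambda\|$. The angle-preservation then emerges automatically, since the comparison produces the estimate with the very same sectoriality angle $\omega$ inherited from $A$ through $\varphi$.
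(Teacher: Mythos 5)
Your core strategy is exactly the paper's: introduce the complete Bernstein function $\varphi$ associated with $\psi$, prove a scalar estimate showing that $r(\lambda;z)=(z+\psi(\lambda))^{-1}-(z+\varphi(\lambda))^{-1}$ is Dunford--Riesz admissible with a bound of order $1/|z|$ uniformly as $z$ ranges over a large sector (this is Theorem \ref{BTR}), transfer it to the operator identity $(z+\psi(A))^{-1}=(z+\varphi(A))^{-1}+r(A;z)$ via the compatibility of the Hille--Phillips, Hirsch and holomorphic calculi (Propositions \ref{Sovp1}, \ref{Sovp2}, \ref{SC1}), and inherit the sectoriality angle of $\varphi(A)$ from the complete case (Theorem \ref{LB}). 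The points you flag as delicate --- uniformity of the scalar bound in $\lambda$ and the calculus compatibility, including the approximation $A_\epsilon=A+\epsilon$ needed when $A$ does not have dense range --- are precisely where the paper spends its effort. One small imprecision: what you need is $\|\lambda R_\lambda\|$ bounded, not $\|R_\lambda\|$; the paper's estimate \eqref{SMS} is indeed of this form.

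There is, however, a genuine gap in your opening reduction. You assert that $-A$ generates a \emph{bounded} holomorphic semigroup of angle $\theta$ iff $A\in\Sect(\pi/2-\theta)$ with resolvent bounds for every $\omega'>\omega$, and that \emph{sectorial} boundedness corresponds to the bound persisting at the critical angle. This dictionary is shifted by one notch: the resolvent bounds for every $\omega'>\omega$ are equivalent to the \emph{sectorially bounded} case ($-A\in\mathcal{BH}(\theta)$), while a merely bounded holomorphic semigroup of angle $\theta$ (the first assertion of the theorem) does \emph{not} correspond to any sectoriality statement about $A$ --- already $A=\ui$ gives a bounded semigroup with entire extension that is unbounded on every sector. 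Consequently your plan, as written, proves only the second assertion. The first assertion requires an additional argument: the paper shifts to $A+d$, which \emph{is} in $\mathcal{BH}(\theta')$ for $\theta'<\theta$, applies the sectorially bounded result there, and then returns to $A$ via the factorization $e^{-t\psi(A)}=e^{-t\psi(A+d)}e^{tB_d}$ with $B_d=[\psi(\cdot+d)-\psi(\cdot)](A)$ bounded (Proposition \ref{limit} and Lemma \ref{sdvig}). Without some such device the non-sectorially-bounded case is not reached by the resolvent comparison at all.
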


The functional calculus ideas allow one also to \emph{characterize}  the
improving property of $\psi$ if $\psi$ is a complete Bernstein
function, i.e. if, in addition, $\psi$ extends to the upper
half-plane and maps it into itself. The characterization given in Corollary \ref{IlCBF12} below is a consequence of the following interesting result (see Theorem \ref{IlCBF11}).
\begin{thm}\label{CBFIntro}
Let $\psi$ be  a complete Bernstein function and let $\gamma \in
(0,\pi/2)$ be fixed. The the following assertions are equivalent.
\begin{itemize}

\item [(i)] the function
$ \psi$ maps the closed  right half-plane  into the sector
 $\overline{\Sigma}_\gamma:=\{\lambda\in \mathbb C: |\arg (\lambda)|\le \gamma\}.$

\item [(ii)]  For each (complex) Banach space $X$ and each generator $-A$ of a bounded
$C_0$-semigroup on $X,$ the operator $-\psi(A)$ generates  a
sectorially bounded holomorphic $C_0$-semigroup on $X$ of angle
$\pi/2-\gamma.$

\end{itemize}
\end{thm}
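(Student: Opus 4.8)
The plan is to treat the two implications by quite different means: $(ii)\Rightarrow(i)$ by testing against the simplest possible operators, and $(i)\Rightarrow(ii)$ by the functional calculus machinery, with the burden falling on a single uniform norm estimate.

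For $(ii)\Rightarrow(i)$ I would specialise (ii) to one-dimensional spaces. Fix $z_0\in\overline{\mathbb{C}_+}$ and take $X=\mathbb{C}$, $A=z_0$, so that $-A$ generates the scalar $C_0$-semigroup $(e^{-tz_0})_{t\ge0}$, which is bounded precisely because $\re z_0\ge0$. Since subordination commutes with the scalar calculus, $\psi(A)=\psi(z_0)$ and $e^{-s\psi(A)}=e^{-s\psi(z_0)}$. By (ii) the map $s\mapsto e^{-s\psi(z_0)}$ is a sectorially bounded holomorphic semigroup of angle $\pi/2-\gamma$; for a scalar $c=\psi(z_0)$ this holds if and only if $e^{-sc}$ stays bounded on every subsector $\overline{\Sigma}_\delta$, $\delta<\pi/2-\gamma$, i.e. if and only if $\re(sc)\ge0$ for all $s\in\Sigma_{\pi/2-\gamma}$, which is in turn equivalent to $|\arg c|\le\gamma$. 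Thus $\psi(z_0)\in\overline{\Sigma}_\gamma$ for every $z_0\in\overline{\mathbb{C}_+}$ (the imaginary axis being included, since $e^{-ity}$ is bounded), and this is exactly (i).

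For $(i)\Rightarrow(ii)$ the starting point is a scalar geometric observation. Because $-A$ generates a bounded $C_0$-semigroup, $A$ is of half-plane type: $\sigma(A)\subseteq\overline{\mathbb{C}_+}$ and $\|(\lambda+A)^{-1}\|\le M/\re\lambda$ for $\re\lambda>0$. Under (i), for $w\in\overline{\mathbb{C}_+}$ one has $|\arg\psi(w)|\le\gamma$, so for $z\in\overline{\Sigma}_\delta$ with $\delta<\pi/2-\gamma$ the product satisfies $|\arg(z\psi(w))|\le\delta+\gamma<\pi/2$; hence $\re(z\psi(w))\ge0$ and $|e^{-z\psi(w)}|\le1$ throughout the right half-plane. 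The function $f_z:=e^{-z\psi}$ is therefore bounded and holomorphic on $\mathbb{C}_+$, and the natural candidate for the subordinated semigroup is $e^{-z\psi(A)}:=f_z(A)$, defined through the functional calculus for $A$. The algebraic semigroup law $f_{z_1}f_{z_2}=f_{z_1+z_2}$, the holomorphy of $z\mapsto f_z$, strong continuity as $z\to0$ and the identification of the generator as $-\psi(A)$ are then formal consequences of multiplicativity and continuity of the calculus, together with the subordination identity already available for real times.

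The real obstacle is the quantitative step: passing from the scalar bound $\|f_z\|_\infty\le1$ to a uniform operator bound $\sup_{z\in\overline{\Sigma}_\delta}\|f_z(A)\|<\infty$, since a generator of a merely bounded $C_0$-semigroup need not admit a bounded $H^\infty(\mathbb{C}_+)$-calculus. Here the completeness of the Bernstein function must be used. For real $z=t>0$ this is automatic: $e^{-t\psi}$ is completely monotone, so $f_t=\mathcal{L}\mu_t^\psi$ with $\mu_t^\psi\ge0$ of mass $e^{-t\psi(0^+)}\le1$, and the Hille--Phillips estimate gives $\|f_t(A)\|\le M$. For complex $z$ in the sector I would establish that $f_z$ is still a Laplace--Stieltjes transform $\mathcal{L}\nu_z$ of a (now complex) measure on $[0,\infty)$ whose total variation is bounded uniformly on each $\overline{\Sigma}_\delta$ --- equivalently, I would prove the resolvent estimate $\|\lambda(\lambda-\psi(A))^{-1}\|\le C(\gamma,\gamma')$ for all $\lambda\notin\overline{\Sigma}_{\gamma'}$, $\gamma'\in(\gamma,\pi)$, which says precisely that $\psi(A)$ is sectorial of angle $\gamma$. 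The tool is the Pick/Stieltjes structure of $\psi$: for suitable $\lambda$ the function $(\lambda+\psi)^{-1}$ is a Stieltjes function, hence an LST of a positive measure, and comparison of $\psi$ with its associated complete Bernstein data should let one control the variation of $\nu_z$ (resp. the resolvent) uniformly. I expect this variation/resolvent bound --- rather than any of the soft functional-calculus formalities --- to be the decisive and most delicate point. Once $\psi(A)$ is known to be sectorial of angle $\gamma<\pi/2$, the classical fact that such operators generate bounded holomorphic semigroups of angle $\pi/2-\gamma$, with uniform bounds on proper subsectors, yields exactly the sectorially bounded holomorphic semigroup of angle $\pi/2-\gamma$ required in (ii); that the angle cannot be larger is already witnessed by the scalar examples of the converse direction.
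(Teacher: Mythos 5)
Your direction (ii)$\Rightarrow$(i) is correct and is in fact more self-contained than the paper's treatment: the paper simply invokes \cite[Theorem 4]{Carasso} and its proof, whereas your scalar test with $X=\mathbb C$, $A=z_0\in\overline{\C}_+$ is a clean elementary substitute (the identification $\psi(A)=\psi(z_0)$ for the scalar Hille--Phillips calculus and the computation that $s\mapsto e^{-s c}$ is bounded on every $\Sigma_\delta$, $\delta<\pi/2-\gamma$, iff $|\arg c|\le\gamma$ are both routine).

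The implication (i)$\Rightarrow$(ii), however, has a genuine gap exactly where you say you ``expect'' the decisive point to be. You correctly reduce the problem to the sectoriality estimate $\|\lambda(\lambda-\psi(A))^{-1}\|\le C(\gamma,\gamma')$ for $\lambda\notin\overline{\Sigma}_{\gamma'}$, $\gamma'>\gamma$, and you correctly observe that the scalar bound $\|e^{-z\psi}\|_{H^\infty(\C_+)}\le 1$ cannot be transferred to the operator level because $A$ need not admit a bounded $H^\infty$-calculus. But the tool you propose --- that $(\lambda+\psi)^{-1}$ is a Stieltjes function for ``suitable'' $\lambda$ --- only works directly for $\lambda>0$ (where $\lambda+\psi\in\mathcal{CBF}$ and its reciprocal is Stieltjes, recovering sectoriality of angle $\le\pi/2$, which one already has for free from the Hirsch calculus). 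The whole content of the theorem is the improvement of the angle from $\pi/2$ to $\gamma$, i.e.\ the extension of the resolvent bound to complex $\lambda$ filling out $\C\setminus(-\overline{\Sigma}_{\gamma'})$, and for that no argument is given: ``comparison of $\psi$ with its associated complete Bernstein data should let one control the variation'' is a hope, not a proof. The paper closes this gap by a two-step mechanism you do not reproduce: first, Proposition \ref{psi1} uses the Stieltjes representation $(a,0,\sigma)$ of $\psi$ to show that the hypothesis $\psi(\overline{\C}_+)\subset\overline{\Sigma}_\gamma$ self-improves to $\psi(\overline{\Sigma}_{\theta})\subset\overline{\Sigma}_{\tilde\theta}$ for sectors $\Sigma_\theta$ strictly larger than the half-plane, with $\tilde\theta\to\gamma$ as $\theta\to\pi/2$; second, Theorem \ref{IlCBF} converts this sector-shrinking property into the statement that $\lambda\mapsto[\psi(\lambda^{\theta/\pi})]^{\pi/\tilde\theta}$ is again a complete Bernstein function, so that the Hirsch calculus (which preserves sectoriality angles) combined with the rescaling by fractional powers $A\mapsto A^{q}$ yields $\psi(A)\in\Sect(\tilde\theta\cdot\frac{\pi/2}{\theta})$, and letting $\theta\downarrow\pi/2$ gives $\Sect(\gamma)$. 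Without this (or an actual proof of your proposed uniform variation bound for the measures representing $e^{-z\psi}$, $z\in\Sigma_\delta$), the implication is not established.
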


Moreover, we are able to strengthen essentially the results by
Fujita from \cite{F1} removing in particular several assumptions
made in \cite{F1}.
\begin{thm}\label{BFIntro1}
Let $\psi$ be  Bernstein function. Suppose there exist $\theta\in
(\pi/2,\pi)$ and $r>0$ such that $\psi$ admits a continuous extension 
to $\overline{\Sigma}_\theta$ which is holomorphic in $\Sigma_\theta,$ and
\begin{equation}\label{CKrIntro}
0\le \arg (\psi(\lambda)) \le \pi/2 \qquad \mbox{if}\qquad 0 \le
\arg (\lambda) \le \theta \quad \text{and} \quad |\lambda|\ge r.
\end{equation}
If $-A$ is the generator of a  bounded $C_0$-semigroup on $X$,
then the (bounded) $C_0$-semigroup $(e^{-t\psi(A)})_{t\ge 0}$ is
holomorphic in $\Sigma_{\theta_0}$ with
$\theta_0=\frac{\pi}{2}(1-\pi/(2\theta)).$
\end{thm}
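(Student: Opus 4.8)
The plan is to build the complexified family $(e^{-z\psi(A)})_{z\in\Sigma_{\theta_0}}$ directly from the subordination formula \eqref{subordvec}, by showing that for each $z$ in the sector the function $\lambda\mapsto e^{-z\psi(\lambda)}$ is the Laplace transform of a bounded (complex) measure $\mu_z$ on $[0,\infty)$, with total variation bounded uniformly on compact subsectors of $\Sigma_{\theta_0}$. Once this is available, the Hille--Phillips calculus for the bounded semigroup $(e^{-tA})_{t\ge0}$ lets us set $e^{-z\psi(A)}:=\int_0^\infty e^{-sA}\,d\mu_z(s)$, a bounded operator with $\|e^{-z\psi(A)}\|\le M\,\|\mu_z\|$ where $M=\sup_{t\ge0}\|e^{-tA}\|$. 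The convolution identity $\mu_z*\mu_{z'}=\mu_{z+z'}$, holomorphy of $z\mapsto\mu_z$, and the uniform bound will then deliver the semigroup law, analyticity, and local boundedness on subsectors, while strong continuity as $z\to0$ follows by a density argument; for real $z=t>0$ this reproduces the subordinated semigroup, so $-\psi(A)$ is indeed its generator. As a preliminary reduction, since $\psi$ is real on $(0,\infty)$, Schwarz reflection $\psi(\bar\lambda)=\overline{\psi(\lambda)}$ extends \eqref{CKrIntro} to the lower half of the sector, giving $-\pi/2\le\arg\psi(\lambda)\le0$ for $-\theta\le\arg\lambda\le0$ and $|\lambda|\ge r$.

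The conceptual core is a Phragm\'en--Lindel\"of estimate that sharpens \eqref{CKrIntro} inside the right half-plane. Put $u(\lambda):=\arg\psi(\lambda)$, a harmonic function on the part of $\Sigma_\theta$ where $\psi\ne0$; then $u\equiv0$ on $(0,\infty)$, and by \eqref{CKrIntro} we have $0\le u\le\pi/2$ on the ray $\arg\lambda=\theta$ for $|\lambda|\ge r$. Comparing $u$ with the harmonic majorant $\lambda\mapsto\frac{\pi}{2\theta}\arg\lambda$, which is $0$ on $(0,\infty)$ and $\pi/2$ on $\arg\lambda=\theta$, the maximum principle on the sector $\{0<\arg\lambda<\theta\}$ should yield
\[
\arg\psi(\lambda)\le\frac{\pi}{2\theta}\,\arg\lambda,\qquad 0\le\arg\lambda\le\theta,\quad|\lambda|\ge r.
\]
At $\arg\lambda=\pi/2$ this reads $\arg\psi(\lambda)\le\frac{\pi^2}{4\theta}$, so together with the reflected bound one gets
\[
|\arg\psi(\lambda)|\le\frac{\pi}{2}-\theta_0\qquad\text{for }\ \re\lambda\ge0,\ |\lambda|\ge r,
\]
since $\frac{\pi^2}{4\theta}=\frac{\pi}{2}-\theta_0$; this is precisely where the value $\theta_0=\frac{\pi}{2}(1-\pi/(2\theta))$ comes from. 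To license the maximum principle on a sector of opening $\theta<\pi$ one must control the growth of $\log\psi$: because $\psi$ is Bernstein it grows at most linearly, whence $\log|\psi(\lambda)|=O(\log|\lambda|)$, far below the critical rate $\exp(c|\lambda|^{\pi/\theta})$, and one also checks that $\psi\ne0$ on $\overline{\HP}$ for large $|\lambda|$ so that a single-valued branch of $u$ exists. This growth-and-zeros bookkeeping, together with the sharpness of the resulting angle, is the main obstacle.

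With the angle bound in hand the construction closes. For $z=|z|e^{\ui\phi}$ with $|\phi|\le\theta_0-\varepsilon$ and $\lambda\in\overline{\HP}$, $|\lambda|\ge r$, we get $|\arg(z\psi(\lambda))|\le\frac{\pi}{2}-\varepsilon$, hence $\re(z\psi(\lambda))\ge\sin(\varepsilon)\,|z|\,|\psi(\lambda)|$; since $\psi$ is unbounded (as forced by \eqref{CKrIntro}) and by the angle control just obtained, $|\psi(\lambda)|\to\infty$ as $|\lambda|\to\infty$ in $\overline{\HP}$, so $g_z(\lambda):=e^{-z\psi(\lambda)}$ lies in $H^\infty(\HP)$ and decays at infinity. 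One then recovers $\mu_z$ by inverting the Laplace transform (a Bromwich-type integral on a line $\re\lambda=c>0$, shifted toward the imaginary axis) and estimates its total variation through this decay together with bounds on $g_z'$, obtaining $\|\mu_z\|\le C_\varepsilon$ uniformly over the truncated subsector and holomorphically in $z$. The region $|\lambda|\le r$, where \eqref{CKrIntro} need not hold and $\psi$ may be badly behaved near the origin, is harmless: there $\psi$ is bounded, so it perturbs $\mu_z$ only by a uniformly controlled term. Assembling these pieces as in the first paragraph produces the holomorphic $C_0$-semigroup of angle $\theta_0$. Besides the Phragm\'en--Lindel\"of step, the technically delicate point is the \emph{uniform} Laplace representability of $g_z$ across the subsector, which is exactly the kind of estimate the functional-calculus machinery developed in this paper is meant to supply.
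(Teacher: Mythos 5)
Your route is genuinely different from the paper's (which shifts to $\psi(\cdot+d)$, observes that $\lambda\mapsto[\psi((\lambda^{\theta/\pi}))]^{2}$ is then a Pick function and hence a complete Bernstein function, and transports the angle through the Hirsch/holomorphic functional calculi via Theorem \ref{IlCBF}, Corollary \ref{IlCBFcor} and Lemma \ref{sdvig}), but as written it has two substantive gaps. First, the Phragm\'en--Lindel\"of step is not licensed by the hypotheses: \eqref{CKrIntro} controls $\arg\psi$ on the ray $\arg\lambda=\theta$ only for $|\lambda|\ge r$, and on the region $\{\pi/2<\arg\lambda<\theta,\ |\lambda|<r\}$ the Bernstein property gives you nothing --- $\psi$ may vanish there, so a single-valued bounded branch of $\arg\psi$ need not exist on the full sector, and even if it does, the comparison region has the extra boundary arc $\{|\lambda|=r\}$ on which you have no usable bound. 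The cure is exactly the paper's first move: replace $\psi$ by $\psi_d=\psi(\cdot+d)$ for large $d$, which satisfies the sector-mapping condition with no radius restriction, run the angle argument for $\psi_d(A)=\psi(A+d)$, and undo the shift at the operator level (Lemma \ref{sdvig}, using that $\psi(A+d)-\psi(A)$ is bounded). Without some such reduction your harmonic-majorant comparison does not go through.

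Second, and more seriously, the step from the angle bound $|\arg\psi(\lambda)|\le\pi/2-\theta_0$ to the uniform representation $e^{-z\psi(\cdot)}=\widehat{\mu_z}$ with $\|\mu_z\|_{M_b}\le C_\varepsilon$ on subsectors is asserted, not proved, and it is essentially the entire content of the theorem. Note that $|\psi(\lambda)|$ may grow only logarithmically (e.g.\ $\psi=\log(1+\cdot)$), so $|e^{-z\psi(\lambda)}|$ decays only like $|\lambda|^{-c|z|}$; the Bromwich inversion then does not converge absolutely for small $|z|$, and the total-variation bound must be uniform as $z\to0$ in the subsector --- precisely the regime needed to get local boundedness and strong continuity at the origin, hence a holomorphic $C_0$-semigroup. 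A bound for $|z|$ bounded away from $0$ does not suffice, and the semigroup law propagates bounds in the wrong direction for this purpose. This is exactly the difficulty that the Carasso--Kato characterization $\mathcal I=\mathcal T_1$ isolates (there one bounds $\|\mu_t'\|=O(t^{-1})$ for real $t$ and sums a Taylor series), and the present theorem is designed to \emph{avoid} having to produce such measure estimates: the paper instead gets a sectorial resolvent bound for $\psi(A+d)$ directly from Theorem \ref{LB} applied to the associated complete Bernstein function. To complete your argument you would need to supply the inversion estimates (with the integrations by parts, the bounds on $\lambda\psi'$, $\lambda^2\psi''$ from \eqref{Jac2} extended to sectors, and the splitting near $\lambda=0$) in a form uniform down to $z=0$; until then the proof is a plan rather than a proof.
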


Finally, let us describe the structure of our paper. The paper is
organized is as follows. Section  \ref{Secberfunct} contains basic
information on Bernstein functions together with new notions and
properties which are related to Bernstein functions and
 are  crucial for the sequel. In Section  \ref{Fun}, we
review functional calculi theory needed for the proofs of our main
results. Several estimates for resolvents  of operators given by
complete Bernstein functions of semigroup generators are contained
in Section \ref{frac}. They  are probably of some independent
interest. Section \ref{Main1} is devoted to the proof of one of
our central results, Theorem \ref{TangleInt}. In Section
\ref{Main2}, we study the improving properties of Bernstein
functions and complement and strengthen the corresponding
statements by Carasso-Kato and Fujita.  Finally, in Appendix, we comment on alternative ways
to prove Theorem \ref{TangleInt}.

\section{Notations and generalities}

For a closed linear operator $A$ on a complex Banach space $X$ we
denote by $\dom(A),$ $\ran(A),$ $\rho(A)$ and $\sigma(A)$ the {\em
domain}, the {\em range}, the {\em resolvent set} and the {\em
spectrum} of $A$, respectively, and let  $\cls{\ran}(A)$ stand for
the norm-closure of the range of $A$. For closable $A$ we denote
its closure by $\overline{A}.$
 The space of bounded linear operators
on $X$ is denoted by $\mathcal L(X)$.

The Laplace transform  $\widehat \mu$ of a Laplace transformable
measure  $\mu$ will be defined as usual as
 $$ \widehat \mu(z)
:= \int_0^{\infty} e^{-sz} \, \mu(d{s})
$$
for appropriate $z.$ The same notation and definition will be
clearly apply for Laplace transformable functions as well.

 For linear operators $A$ and $B$ on $X$, as usual, we consider  the sum $A+B$ and product  $AB$ with domains given by
\begin{eqnarray*}
\dom(A+B) &=& \dom(A) \cap \dom(B), \\
\dom(AB) &=& \{x \in \dom(B): Bx \in\dom(A)\}.
\end{eqnarray*}

We will write the Lebesgue integral $\int_{(0,\infty)}$ as
$\int_{0+}^{\infty}$.
The symbol $*$ will denote convolution of measures
(or functions).

Let also
\[
\mathbb C_{+}=\{\lambda \in \C:\,{\rm Re}\,\lambda>0\},\qquad
\mathbb R_+=[0,\infty),
\]
and for $\beta\in (0,\pi]$ and $R>0$, we denote
\[
\Sigma_{\beta}:=\{z\in \C:\,|\arg \lambda|<\beta\},\qquad
\Sigma_0=(0,\infty),
\]
and
\[
\Sigma_{{\beta}}(R):=\{z\in \Sigma_\beta:\, |z|<R\}, \qquad
\Sigma_{\beta}^{+}:=\{\lambda\in \C:\, 0<\arg \lambda<\beta\}.
\]
Finally, we let
\[
H^{+}:=\{\lambda\in \C: {\rm Im}\,\lambda>0 \}.
\]

\section{Bernstein functions}\label{Secberfunct}

This section will lay a function-theoretical background for our
functional calculi considerations in the subsequent sections. In
particular, we will prove a number of new properties of Bernstein
functions and revisit some of known ones crucial for the sequel.

We start with recalling one of possible definitions of a Bernstein
function.
\begin{defn}
A smooth function $\psi: [0,\infty)\mapsto [0,\infty)$ is called
\emph{Bernstein} if its derivative $\psi'$ is \emph{completely
monotone}, i.e.
\begin{equation}
\psi'(\lambda)=\int_{0}^{\infty}e^{-\lambda s}\, \nu (ds), \qquad
\lambda >0,
\end{equation}
for a  Laplace transformable positive Radon measure $\nu$ on
$[0,\infty).$
\end{defn}
The class of Bernstein functions will be denoted by
$\mathcal{BF}.$

By \cite[Theorem 3.2]{SchilSonVon2010}, $\psi$ is Bernstein if and
only if there exist  $a, b\geq 0$ and a positive Radon measure
$\mu$ on $(0,\infty)$ satisfying
\begin{equation*}
\int_{0+}^\infty\frac{s}{1+s}\,\mu(d{s})<\infty \label{mu}
\end{equation*}
such that
\begin{equation}\label{defBF}
\psi(z)=a+bz+\int_{0+}^\infty (1-e^{-zs})\mu(d{s}), \qquad z>0.
\end{equation}
The formula \eqref{defBF} is called the L\'evy-Hintchine
representation of $\psi.$ The triple $(a,b,\mu)$ is defined
uniquely and is called L\'evy triple of $\psi.$ We will then often
write $\psi \sim (a,b,\mu)$ meaning the L\'evy-Hintchine
representation of $\psi.$ Every Bernstein function extends
analytically to $\mathbb C_+$ and continuously to
$\overline{\mathbb C}_+$
In the following  Bernstein functions will be identified with
their continuous extensions to $\overline{\mathbb C}_+.$

The standard examples of Bernstein functions include
\begin{equation}\label{exber}
1-e^{-z}, \,\, \log (1+z), \,\, \text{and}\,\, z^{\alpha}, \,
\alpha \in [0,1].
\end{equation}

There is a profound theory of Bernstein functions with many
implications in functional analysis and probability theory. Being
unable to give any reasonable account of them, we refer to a recent book
\cite{SchilSonVon2010}.

Geometric  properties of Bernstein functions will be  of
particular importance for us, in particular, the fact
 that a Bernstein function $\psi$ preserve sectors $\Sigma_\omega$ in
 a sense that $\psi(\overline{\Sigma}_{\omega})\subset \overline{\Sigma}_\omega,$
 see   \cite[Proposition 3.6]{SchilSonVon2010} and \cite[Corollary 3.3]{Sonia}.
For a later use, we state this
result as a proposition below and provide it with a simple proof using an idea from
\cite{Sonia}.

\begin{prop}\label{PrB}
Let  $F\in \mathcal{BF}$. Then
 $F$ preserves angular sectors, i.e.
\begin{equation}\label{prsec}
F(\overline{\Sigma}_\omega)\subset
\overline{\Sigma}_\omega,\qquad \omega\in (0,\pi/2).
\end{equation}
\end{prop}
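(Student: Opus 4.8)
The plan is to verify membership in $\overline{\Sigma}_\omega$ through the two supporting half-planes of the sector. Writing $\overline{\Sigma}_\omega=\{\zeta:\im(\ue^{-\ui\omega}\zeta)\le 0\}\cap\{\zeta:\im(\ue^{\ui\omega}\zeta)\ge 0\}$, I would first observe that the two conditions are interchanged by complex conjugation: since $F$ is real on $(0,\infty)$ and holomorphic on $\mathbb{C}_+$, Schwarz reflection gives $F(\overline{z})=\overline{F(z)}$, so it suffices to prove the single inequality $\im(\ue^{-\ui\omega}F(z))\le 0$ for every $z\in\overline{\Sigma}_\omega$; applying it to $\overline{z}\in\overline{\Sigma}_\omega$ then yields the companion inequality automatically, and the intersection of the resulting two angular conditions is exactly $\overline{\Sigma}_\omega$ (here $\omega\le\pi/2$ is what makes the intersection collapse to $|\arg|\le\omega$).

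Next I would insert the L\'evy--Khintchine representation \eqref{defBF} (valid on $\overline{\mathbb{C}}_+$ after analytic continuation, with the interchange of $\im$ and the integral justified by dominated convergence using $|1-\ue^{-zs}|\le\min\{|z|s,2\}$ together with the integrability of $s/(1+s)$ against $\mu$). This splits $\im(\ue^{-\ui\omega}F(z))$ into three contributions coming from the triple $(a,b,\mu)$. The constant term gives $\im(\ue^{-\ui\omega}a)=-a\sin\omega\le 0$; the linear term gives $b\,\im(\ue^{-\ui\omega}z)\le 0$, because $z\in\overline{\Sigma}_\omega$ forces $\arg(\ue^{-\ui\omega}z)\in[-2\omega,0]\subset(-\pi,0]$ and $b\ge 0$. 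Thus everything reduces to showing that the integrand is nonpositive, i.e. that each generator satisfies $\im(\ue^{-\ui\omega}(1-\ue^{-zs}))\le 0$ for $s>0$; since $zs\in\overline{\Sigma}_\omega$, this is precisely the assertion $1-\ue^{-\zeta}\in\overline{\Sigma}_\omega$ for every $\zeta\in\overline{\Sigma}_\omega$, i.e. the proposition for the single Bernstein function $1-\ue^{-\zeta}$.

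Writing $\zeta=x+\ui y\in\overline{\Sigma}_\omega$ (so $x\ge 0$ and $|y|\le x\tan\omega$), a direct computation gives $\im(\ue^{-\ui\omega}(1-\ue^{-\zeta}))=-\sin\omega+\ue^{-x}\sin(\omega+y)$, so the whole argument comes down to the elementary inequality $\ue^{-x}\sin(\omega+y)\le\sin\omega$. This is the heart of the matter and the only real obstacle, but it yields to crude estimates: $\sin(\omega+y)=\sin\omega\cos y+\cos\omega\sin y\le\sin\omega+|y|\cos\omega$ (using $\cos y\le 1$ and $\sin y\le|y|$), while $\ue^{x}\ge 1+x$ and the sector constraint $|y|\le x\tan\omega$ give $|y|\cos\omega\le x\sin\omega$; combining, $\sin(\omega+y)\le(1+x)\sin\omega\le\ue^{x}\sin\omega$, which is the desired bound. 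It is exactly here that $\omega<\pi/2$ enters, through $\cos\omega>0$ and the finiteness of $\tan\omega$, and indeed the conclusion fails at $\omega=\pi/2$, where the constraint on $y$ becomes vacuous. Having established the pointwise inequality for all generators, the three estimates combine to give $\im(\ue^{-\ui\omega}F(z))\le 0$ on all of $\overline{\Sigma}_\omega$, and the conjugation symmetry noted at the outset closes the proof.
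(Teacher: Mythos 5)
Your proof is correct, but it takes a genuinely different route from the paper's. Both arguments reduce, via the L\'evy--Khintchine representation and the fact that $\overline{\Sigma}_\omega$ is a closed convex cone containing $a\ge 0$ and $bz$, to the single generator $\zeta\mapsto 1-\ue^{-\zeta}$. The paper handles that generator by quoting the integral formula
\begin{equation*}
1-\ue^{-z}=\frac{2z}{\pi}\int_0^\infty\frac{(1-\cos u)\,\diff u}{u^2+z^2},
\end{equation*}
and observing that each kernel $z\mapsto\bigl(z+u^2/z\bigr)^{-1}$ preserves sectors (sum and inversion both preserve $\overline{\Sigma}_\omega$ for $\omega<\pi/2$), so the positive superposition does too. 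You instead verify membership in the two supporting half-planes of the sector directly: the reflection symmetry $F(\overline z)=\overline{F(z)}$ correctly reduces matters to the single inequality $\im\bigl(\ue^{-\ui\omega}(1-\ue^{-\zeta})\bigr)\le 0$, your identity $\im\bigl(\ue^{-\ui\omega}(1-\ue^{-\zeta})\bigr)=-\sin\omega+\ue^{-x}\sin(\omega+y)$ checks out, and the chain $\sin(\omega+y)\le\sin\omega+|y|\cos\omega\le(1+x)\sin\omega\le\ue^{x}\sin\omega$ is valid under the sector constraint $|y|\le x\tan\omega$ (note $\sin y\le|\sin y|\le |y|$ covers all signs of $y$). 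What each approach buys: the paper's is shorter once the integral representation of $1-\ue^{-z}$ is granted, and makes the structural reason (closed convex cone stable under inversion) transparent; yours is entirely self-contained and elementary, at the price of a small trigonometric computation, and it also makes visible exactly where $\omega<\pi/2$ is needed (finiteness of $\tan\omega$ and the collapse of the two half-plane conditions to the sector).
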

\begin{proof}
Note that
\[
1-e^{-z}=\frac{2 z}{\pi} \int_0^\infty \frac{(1-\cos u)\,du}{u^2+z^2},\qquad
{\rm Re}\,z>0.
\]
Since for every  $u>0$ the function
\[
g_u(z):=\frac{z}{u^2+z^2}
=\left(z+\frac{u^2}{z}\right)^{-1},\qquad {\rm Re}\,z>0,
\]
preserves the angular sectors, we infer that the function
\[
z\mapsto 1-e^{-z},\quad z\in \overline{\C}_{+}
\]
preserves the angular sectors too.
Then, using the Levy-Khinchine representation \eqref{defBF} of $\psi$, we obtain (\ref{prsec}).
\end{proof}

The preservation of sectors property has further consequences
important for the proofs of our main assertions. One of them is
mentioned below together with another ``geometric'' property.
\begin{prop}\label{P1}
Let $\psi\in \mathcal{BF}.$
\begin{itemize}
\item [(i)] For all $\gamma>0$, $\beta\in (0,\pi/2)$ such
that  $\gamma+\beta<\pi$,
\begin{equation}\label{FEH}
|z+\psi(\lambda)|\ge \cos ((\gamma+\beta)/2)\,
(|z|+|\psi(\lambda)|),\quad z\in \overline{\Sigma}_\gamma,\quad  \lambda\in \overline{\Sigma}_\beta.
\end{equation}
\item [(ii)] one has
\begin{equation}\label{Re}
{\rm Re}\,\psi(\lambda)\ge \psi({\rm Re}\lambda),\quad \lambda\in \C_{+}.
\end{equation}
\end{itemize}
\end{prop}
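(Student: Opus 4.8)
The plan is to treat the two assertions separately, reducing (i) to a purely geometric estimate via Proposition \ref{PrB}, and deriving (ii) directly from the L\'evy--Khintchine representation \eqref{defBF}.

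For part (i), the crucial input is Proposition \ref{PrB}: since $\beta\in(0,\pi/2)$, we have $\psi(\overline{\Sigma}_\beta)\subset\overline{\Sigma}_\beta$, so $w:=\psi(\lambda)$ satisfies $|\arg w|\le\beta$ whenever $\lambda\in\overline{\Sigma}_\beta$. Thus the claim becomes the following elementary fact: for $z=|z|e^{\ui\alpha}$ with $|\alpha|\le\gamma$ and $w=|w|e^{\ui\delta}$ with $|\delta|\le\beta$, one has $|z+w|\ge\cos((\gamma+\beta)/2)\,(|z|+|w|)$. I would prove this by projecting $z+w$ onto the bisector direction $e^{\ui\eta}$ with $\eta=(\alpha+\delta)/2$. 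Since $\alpha-\eta$ and $\delta-\eta$ both equal $\pm(\alpha-\delta)/2$, this gives
\[
|z+w|\ge{\rm Re}\big(e^{-\ui\eta}(z+w)\big)=(|z|+|w|)\cos\big((\alpha-\delta)/2\big).
\]
Because $|\alpha-\delta|\le\gamma+\beta<\pi$ and $\cos$ is decreasing on $[0,\pi/2]$, we get $\cos((\alpha-\delta)/2)\ge\cos((\gamma+\beta)/2)$, and the hypothesis $\gamma+\beta<\pi$ guarantees this factor is strictly positive, yielding \eqref{FEH}. An equivalent algebraic route starts from $|z+w|^2=|z|^2+|w|^2+2|z||w|\cos(\alpha-\delta)$ and, using the identity $\cos^2(\tfrac{\gamma+\beta}{2})=\tfrac{1+\cos(\gamma+\beta)}{2}$, reduces the desired inequality to $\tfrac{1-\cos(\gamma+\beta)}{2}(|z|-|w|)^2\ge0$.

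For part (ii), I would write $\lambda=x+\ui y$ with $x={\rm Re}\,\lambda>0$ and substitute \eqref{defBF} directly. The terms $a$ and $b\,{\rm Re}\,\lambda=bx$ coincide with the corresponding terms of $\psi(x)$, while ${\rm Re}(1-e^{-\lambda s})=1-e^{-xs}\cos(ys)$. Taking real parts therefore gives
\[
{\rm Re}\,\psi(\lambda)-\psi(x)=\int_{0+}^{\infty}e^{-xs}\big(1-\cos(ys)\big)\,\mu(ds),
\]
and the integrand is nonnegative since $e^{-xs}>0$, $1-\cos(ys)\ge0$, and $\mu\ge0$, which is exactly \eqref{Re}.

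I do not expect a genuine obstacle here; the argument is elementary once Proposition \ref{PrB} is available. The only points needing care are the correct invocation of that proposition (legitimate precisely because $\beta<\pi/2$) and the justification that splitting off $\psi(x)$ from the integral in (ii) is allowed, which is ensured by the integrability of $\mu$ built into the L\'evy--Khintchine representation \eqref{defBF}.
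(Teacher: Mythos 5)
Your proof is correct and follows essentially the same route as the paper: part (i) reduces via Proposition \ref{PrB} to the elementary estimate $|z+w|\ge\cos((\gamma+\beta)/2)(|z|+|w|)$ for $z,w$ in the respective sectors (your bisector projection is just a repackaging of the paper's computation with $|1+se^{\ui\beta}|^2$), and part (ii) is exactly the paper's pointwise inequality ${\rm Re}(1-e^{-\lambda s})\ge 1-e^{-s\,{\rm Re}\lambda}$ integrated against the L\'evy measure. No gaps.
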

\begin{proof}
Note first that if $\beta\in (-\pi,\pi)$ and $s>0$ then
\begin{equation}\label{number}
|1+se^{i\beta}|^2=1+s^2+2s\cos\beta \ge\cos^2\beta/2 (1+s)^2.
\end{equation}
Let now $ \gamma>0,\quad \beta>0,\quad \gamma+\beta<\pi, $ and
\[
z=r e^{i\gamma_0}\in \overline{\Sigma}_\gamma,\quad \lambda= \rho
e^{i\beta_0}\in \overline {\Sigma}_\beta, \quad |\gamma_0|\le
\gamma,\quad |\beta_0|\le \beta.
\]
Then, using (\ref{number}), we obtain
\[
|z+\lambda|=r|1+r^{-1}\rho e^{i(\beta_0-\gamma_0)}|\ge
\cos((\beta_0-\gamma_0)/2)\,(|z|+|\lambda|).
\]
From this, since
\[
|\beta_0-\gamma_0|\le \beta+\gamma \in (0,\pi), \quad \text{ and}
\quad \cos((\beta_0-\gamma_0)/2)\ge \cos((\beta+\gamma)/2),
\]
it follows that
\begin{equation}\label{number1}
|z+\lambda|\ge \cos((\beta+\gamma)/2)\,(|z|+|\lambda|),\quad z\in
\overline{\Sigma}_\gamma,\quad \lambda\in \overline{\Sigma}_\beta.
\end{equation}
Now  (i) is a direct consequence of Proposition \ref{PrB} and (\ref{number1}). To prove (ii) it suffices to note that
\[
{\rm Re}\,(1-e^{-\lambda})\ge 1-e^{-{\rm Re}\,\lambda},\quad
\lambda\in \C_{+},
\]
and use the L\'evy-Hintchine representation for $\psi.$
\end{proof}

Recall that every Bernstein function $\psi$ satisfies
\begin{equation}\label{Jac1}
\psi(\lambda)\le \psi(s\lambda)\le s\psi(\lambda),\qquad
\lambda>0,\quad s\ge 1,
\end{equation}
and
\begin{equation}\label{Jac2}
\lambda^k \psi^{(k)}(\lambda)\le k!\psi(\lambda),\qquad
\lambda>0,\quad k\in \N,
\end{equation}
see \cite[p. 205]{Jacob}.

The next lemma provides growth estimates for holomorphic extensions of Bernstein functions to the right half-plane.
\begin{lemma}\label{psi}
Let $\psi\in \mathcal{BF}$.
\begin{itemize}
\item [(i)]
For all  $\beta\in(0,\pi/2)$ and
$t>0,$
\begin{equation}\label{R1}
|\psi(te^{\pm i\beta})|\ge \psi(t\cos\beta).
\end{equation}
\item [(ii)] There exist $c_\psi >0$ such that
\begin{eqnarray}
|\psi(z)|&\le& c_\psi |z|,\quad z\in \overline{\C}_{+},\quad |z|\ge 1.
\end{eqnarray}
\item [(iii)] For all  $\beta\in(0,\pi/2)$
\begin{eqnarray}\label{3.13}
 |\psi(z)|&\ge& |z|\psi'(1)\cos\beta, \quad z\in \overline{\Sigma}_\beta,\quad |z|\le 1.
\end{eqnarray}
\end{itemize}
\end{lemma}

\begin{proof}
Let $\psi \sim (a, b,\mu)$. Then
\begin{eqnarray*}
|\psi(te^{\pm i\beta})|&\ge& {\rm Re}\,\psi(te^{i\beta})\\
&=& a+bt\cos\beta+
\int_{0+}^\infty {\rm Re}(1-e^{-ste^{i\beta}})\,\mu(ds)
\\
&\ge& a+bt\cos\beta+ \int_{0+}^\infty
(1-e^{-st\cos\beta})\,\mu(ds)= \psi(t\cos\beta),
\end{eqnarray*}
that is (\ref{R1}) holds.

To prove (ii), we
note that (\ref{defBF}) yields
\[
|\psi(z)|\le a+b|z|+|z|\int_{0+}^1
s\,\mu(\ud{s})+2\int_1^\infty \,\mu(\ud{s}),\qquad z\in
\overline{\C}_{+},
\]
and then
\begin{equation}\label{PP2}
|\psi(z)|\le c_\psi |z|,\qquad z\in \overline{\C}_{+},
\quad |z|\ge 1.
\end{equation}

Furthermore, by (i) and (\ref{Jac1}), we have for any $\beta\in (0,\pi/2):$
\begin{equation*}\label{poch0}
|\psi(z)|\ge \cos\beta \psi(|z|), \qquad z\in \overline{\Sigma}_\beta,\quad
|z|\le 1.
\end{equation*}
Since by (\ref{Jac2}),
\begin{equation*}\label{poch}
\psi(z)\ge z\psi'(z)\ge \psi'(1)z,\qquad z\in (0,1],
\end{equation*}
 we obtain \eqref{3.13}.
\end{proof}

One more notion related to Bernstein functions will also be needed
in the sequel. Let us recall (see e.g. \cite[Definition 5.24]
{SchilSonVon2010}) that a function $f: (0,\infty)\mapsto
(0,\infty)$ is said to be potential, if $f=1/\psi$, where $\psi\in
\mathcal{BF}$. The set of all potentials will be denoted by
$\mathcal{P}.$ Note that $\mathcal{P}$ consists precisely of
 completely monotone functions $f$ satisfying $1/f\in \mathcal{BF}$.

It is often convenient to restrict one's attention to  a rich
subclass of Bernstein functions formed by complete Bernstein
functions. It  has a rich structure which makes it especially
useful in applications.  A Bernstein function $\psi$ is said to be
{\it a complete Bernstein function} if the measure $\mu$ in its
L\'evy-Hintchine representation (\ref{defBF}) has a completely
monotone density $m$ with respect to Lebesgue measure.
The set of all complete Bernstein functions will be denoted by
$\mathcal{CBF}$ .

The class of complete Bernstein functions allows a number of
characterizations. The ones relevant for our purposes are
summarized in the following statement, see e.g. \cite[Theorem 6.2]{SchilSonVon2010}).

\begin{thm}\label{Shill}
Let  $\psi$ be a non-negative function on $(0,\infty)$. Then the
following conditions are equivalent.
\begin{itemize}

\item [(i)] \,$\psi\in \mathcal{CBF}$,

\item [(ii)] \, There exists a Bernstein function $\varphi$ such that
\begin{equation}\label{Ac1}
\psi(\lambda)=\lambda^2 \widehat \varphi(\lambda),\quad \lambda>0.
\end{equation}

\item [(iii)]
$\psi$ admits a holomorphic extension to $H^+$ such that
\[
{\rm Im}\,(\psi(\lambda))\ge 0\quad \mbox {for all}\quad \lambda \in
H^+,
\]
and such that the limit
\[
\psi(0+)=\lim_{\lambda\to 0+}\,\psi(\lambda)
\]
exists.

\item [(iv)]\,$\psi$ admits a holomorphic extension  to $\C\setminus (-\infty,0]$
which is given by
\begin{equation}\label{Cbf}
\psi(\lambda)=a+b\lambda+\int_{0+}^\infty
\frac{\lambda\,\sigma(ds)}{\lambda+s},
\end{equation}
where $a,b\ge 0$  and $\sigma$ is a positive Radon measure on
$(0,\infty)$ such that
\begin{equation}\label{mmu}
\int_{0+}^\infty \frac{\sigma(ds)}{1+s}<\infty.
\end{equation}
The triple $(a,b,\sigma)$ is defined uniquely and it is called the
Stieltjes representation of $\psi.$
\end{itemize}
\end{thm}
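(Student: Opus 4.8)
The plan is to establish all four equivalences by treating the Stieltjes representation (iv) as a hub and connecting it separately to (i), to (ii), and to (iii). The two implications linking (iv) with (i) and with (ii) are purely computational, obtained from mutually inverse integral identities; the only genuinely analytic step is the passage (iii) $\Rightarrow$ (iv).

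For (i) $\Rightarrow$ (iv) I would take $\psi\sim(a,b,\mu)$ with $\mu(\ud s)=m(s)\,\ud s$, where $m$ is completely monotone, so that $m(s)=\int_{0+}^\infty e^{-st}\,\rho(\ud t)$ for a positive measure $\rho$. Substituting into \eqref{defBF}, Fubini together with the elementary identity $\int_0^\infty(1-e^{-\lambda s})e^{-st}\,\ud s=\lambda/(t(\lambda+t))$ converts the L\'evy integral into $\int_{0+}^\infty\frac{\lambda}{\lambda+t}\,\sigma(\ud t)$ with $\sigma(\ud t)=t^{-1}\rho(\ud t)$, and the admissibility condition on $\mu$ turns into \eqref{mmu}. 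Conversely, for (iv) $\Rightarrow$ (i) I would integrate by parts to get $\frac{\lambda}{\lambda+s}=s\int_0^\infty(1-e^{-\lambda u})e^{-su}\,\ud u$; Fubini then rewrites \eqref{Cbf} in the form \eqref{defBF} with L\'evy density $m(u)=\int_{0+}^\infty s\,e^{-su}\,\sigma(\ud s)$, which is completely monotone by construction. The equivalence (ii) $\Leftrightarrow$ (iv) is of the same flavour: for $\varphi\sim(\tilde a,\tilde b,\tilde\mu)$ a term-by-term Laplace transform of \eqref{defBF} gives $\lambda^2\widehat\varphi(\lambda)=\tilde b+\tilde a\lambda+\int_{0+}^\infty\frac{\lambda s}{\lambda+s}\,\tilde\mu(\ud s)$, which is precisely \eqref{Cbf} under the dictionary $a=\tilde b$, $b=\tilde a$, $\sigma(\ud s)=s\,\tilde\mu(\ud s)$; here the integrability \eqref{mmu} is exactly the condition guaranteeing that $\varphi$ is Bernstein.

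The implication (iv) $\Rightarrow$ (iii) is then a direct verification: the integrand in \eqref{Cbf} is holomorphic on $\C\setminus(-\infty,0]$ and, by \eqref{mmu}, the integral converges locally uniformly there, so $\psi$ has the required holomorphic extension; moreover ${\rm Im}\,\frac{\lambda}{\lambda+s}=s\,{\rm Im}\,\lambda/|\lambda+s|^2\ge0$ and ${\rm Im}(b\lambda)\ge0$ force ${\rm Im}\,\psi\ge0$ on $H^+$, while $\psi(0+)=a$.

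The hard part, and the step I expect to be the main obstacle, is (iii) $\Rightarrow$ (iv). Here $\psi$ is holomorphic on $H^+$ with nonnegative imaginary part, i.e.\ a Nevanlinna--Pick function, and the strategy is to invoke the classical Herglotz/Nevanlinna representation $\psi(\lambda)=\alpha+\beta\lambda+\int_{\R}\big(\tfrac{1}{t-\lambda}-\tfrac{t}{1+t^2}\big)\,\nu(\ud t)$ with $\beta\ge0$, $\nu\ge0$ and $\int_\R(1+t^2)^{-1}\,\nu(\ud t)<\infty$. Because $\psi$ is real on $(0,\infty)$, the Schwarz reflection principle extends it holomorphically across the positive axis, so its boundary values there are real; the Stieltjes--Perron inversion formula then localizes the measure, forcing $\supp\nu\subset(-\infty,0]$. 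After the substitution $t=-s$ and the algebraic rearrangement $-(\lambda+s)^{-1}=s^{-1}\big(\tfrac{\lambda}{\lambda+s}-1\big)$, the representation collapses to \eqref{Cbf} with $b=\beta$ and $\sigma(\ud s)=s^{-1}\tilde\nu(\ud s)$, the $\lambda$-independent remainders being absorbed into $a$. Beyond merely citing the representation theorem, the genuinely delicate points are the localization $\supp\nu\subset(-\infty,0]$ via reflection and Stieltjes--Perron inversion, and the use of the assumed existence of $\psi(0+)$ to secure the integrability \eqref{mmu} of $\sigma$ near the origin.
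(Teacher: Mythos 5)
The paper does not actually prove Theorem \ref{Shill}: it is imported wholesale from the literature (``see e.g.\ [Theorem 6.2]'' of Schilling--Song--Vondra\v{c}ek), so there is no in-paper argument to compare yours against. What you propose is, in substance, the standard proof of that cited theorem, and it is correct. The hub-and-spoke structure with (iv) as the hub, the two mutually inverse Laplace-transform dictionaries ($\sigma(\ud t)=t^{-1}\rho(\ud t)$ linking the completely monotone L\'evy density to the Stieltjes measure, and $a=\tilde b$, $b=\tilde a$, $\sigma(\ud s)=s\,\tilde\mu(\ud s)$ linking (ii) to (iv)), and the Nevanlinna--Herglotz route for (iii) $\Rightarrow$ (iv) are exactly the ingredients of the textbook argument. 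The two delicate points you flag do both close as you anticipate: after Schwarz reflection across $(0,\infty)$ one has ${\rm Im}\,\psi(t+i\varepsilon)\to 0$ locally uniformly on compact subsets of $(0,\infty)$, so Stieltjes--Perron inversion kills the Nevanlinna measure there; and letting $\lambda\downarrow 0$ in the Herglotz formula, monotone convergence turns the finiteness of $\psi(0+)$ into $\tilde\nu(\{0\})=0$ together with $\int_{0+}^\infty s^{-1}(1+s^2)^{-1}\,\tilde\nu(\ud s)<\infty$, which, combined with the a priori bound $\int_{0+}^\infty(1+s^2)^{-1}\,\tilde\nu(\ud s)<\infty$, is equivalent to \eqref{mmu} for $\sigma(\ud s)=s^{-1}\,\tilde\nu(\ud s)$; the leftover constant is then $a=\psi(0+)\ge 0$. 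Two cosmetic remarks: the identity $\frac{\lambda}{\lambda+s}=s\int_0^\infty(1-e^{-\lambda u})e^{-su}\,\ud u$ is a direct computation rather than an integration by parts, and in (i) $\Rightarrow$ (iv) the Bernstein representation of the density $m$ a priori allows an atom of $\rho$ at $0$, which must be ruled out using the L\'evy integrability condition on $\mu$ before writing $m(s)=\int_{0+}^\infty e^{-st}\,\rho(\ud t)$.
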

Using the above result it is easy to see that the first function
in \eqref{exber} is not complete Bernstein, while the other
Bernstein functions there are clearly complete.

The next statement sharpens Proposition \ref{PrB} in a specific
situation when complete Bernstein function has its range in a sector
smaller than the right half-plane.
\begin{prop}\label{psi1}
Let $\psi\in \mathcal{CBF}$ and suppose that
\begin{equation}\label{Pi}
\psi(\overline{\C}_+)\subset \overline{\Sigma}_\gamma
\end{equation}
for some $\gamma\in (0,\pi/2).$ Let $\theta_0 \in (\pi/2, \pi)$ be
defined by
\begin{equation}\label{CCcot}
|\cos\theta_0|=\frac{\cot \gamma}{1+\cot \gamma}.
\end{equation}
Then for every $\theta\in (\pi/2,\theta_0)$ one has
\begin{equation}\label{Pi1}
\psi(\overline{\Sigma}_\theta)\subset
\overline{\Sigma}_{\tilde{\theta}},
\end{equation}
where
\begin{equation*} \cot
\tilde{\theta}=\frac{1+\cot\gamma}{\sin\theta}\left(\frac{\cot
\gamma}{1+\cot \gamma}-|\cos\theta|\right), \qquad
\tilde{\theta}\in  (0,\pi/2).
\end{equation*}
\end{prop}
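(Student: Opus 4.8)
The plan is to work from the Stieltjes representation \eqref{Cbf} of $\psi$, reduce the sector inclusion \eqref{Pi1} to a single scalar inequality between ${\rm Re}\,\psi$ and ${\rm Im}\,\psi$ along the boundary ray $\arg\lambda=\theta$, and then extract that inequality from the hypothesis \eqref{Pi} by a duality/averaging argument.

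\emph{Reductions.} Since $\psi$ is real on $(0,\infty)$ and holomorphic on $\C\setminus(-\infty,0]$, we have $\psi(\bar\lambda)=\overline{\psi(\lambda)}$, so it suffices to bound $\arg\psi$ from above by $\tilde\theta$ on the closed upper part of $\overline{\Sigma}_\theta$; the lower estimate and the inclusion \eqref{Pi1} then follow (no wrapping occurs, as $\arg\psi$ will stay in $[0,\theta]$ with $\theta<\pi$). Writing $\psi\sim(a,b,\sigma)$ as in \eqref{Cbf}, the hypothesis \eqref{Pi} forces $b=0$: by dominated convergence (using \eqref{mmu}) one has $\int_{0+}^\infty\frac{\sigma(ds)}{\lambda+s}\to0$ as $\lambda\to\infty$ in $\overline{\C}_+$, hence $\psi(\lambda)\sim b\lambda$, so $b>0$ would give $\arg\psi(it)\to\pi/2>\gamma$ as $t\to+\infty$, contradicting \eqref{Pi}. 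Thus $\psi(\lambda)=a+\int_{0+}^\infty\frac{\lambda}{\lambda+s}\,\sigma(ds)$.

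\emph{Locating the extremum on the ray.} As $\psi\in\mathcal{CBF}$ is zero-free on $\C\setminus(-\infty,0]$, the function $u:=\arg\psi$ is harmonic there; since $\psi(\lambda)/\lambda$ is a Stieltjes function its argument lies in $[-\arg\lambda,0]$, whence $0\le u(\lambda)\le\arg\lambda\le\theta$ on the upper part of $\Sigma_\theta$ (sharpening Proposition \ref{PrB}), so $u$ is bounded. A Phragm\'en--Lindel\"of argument on the sector $\{0<\arg\lambda<\theta\}$ (the vertices $0,\infty$ carry no mass because $u$ is bounded, and $u=0$ on $(0,\infty)$) shows that $\sup u$ is attained on the ray $\arg\lambda=\theta$. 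Hence it suffices to prove, for all $r>0$ and with $D_s:=r^2+2rs\cos\theta+s^2>0$,
\[
a + \int_{0+}^\infty\frac{r(r+s\cos\theta)}{D_s}\,\sigma(ds)\;\ge\;\cot\tilde\theta\int_{0+}^\infty\frac{rs\sin\theta}{D_s}\,\sigma(ds),
\]
which is exactly ${\rm Re}\,\psi(re^{i\theta})\ge\cot\tilde\theta\,{\rm Im}\,\psi(re^{i\theta})$ (this also delivers ${\rm Re}\,\psi>0$, i.e.\ $\tilde\theta<\pi/2$).

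\emph{Crux: transfer from the imaginary axis.} The only quantitative input is \eqref{Pi}, which on the imaginary axis reads
\[
a + \int_{0+}^\infty\frac{t^2}{s^2+t^2}\,\sigma(ds)\;\ge\;\cot\gamma\int_{0+}^\infty\frac{st}{s^2+t^2}\,\sigma(ds),\qquad t>0.
\]
Both this and the target are affine inequalities in the data $(a,\sigma)$ with the same constant-term coefficient, so the plan is to realize the target as a nonnegative superposition of the imaginary-axis inequalities: for each $r>0$ I would seek a probability measure $\eta_r$ on $(0,\infty)$ whose averaged $s$-kernel is dominated, uniformly in $s\ge0$, by the $s$-kernel of the target, and then integrate the imaginary-axis inequality against $\eta_r$. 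The reformulation $\cot\tilde\theta=\cot\gamma\,\cot(\theta/2)+\cot\theta$, together with the fact that at the resonance $r=s$ the kernel $\frac{re^{i\theta}}{re^{i\theta}+s}$ has argument exactly $\theta/2$ (since $1+e^{i\theta}=2\cos(\theta/2)e^{i\theta/2}$), strongly suggests that $\eta_r$ concentrates near the matching scale $t\asymp r$ and pins down the constant. Identifying this combining measure and verifying the resulting pointwise-in-$s$ kernel inequality — equivalently, solving the underlying extremal (linear-programming) problem that produces the coefficient $1+\cot\gamma$ in $\cot\tilde\theta$ — is the step I expect to be the main obstacle; the degenerate case $\theta=\theta_0$, where $\cot\tilde\theta=0$ by \eqref{CCcot}, offers a convenient consistency check.
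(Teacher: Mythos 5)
Your setup is sound as far as it goes: the reduction to $b=0$, the conjugation symmetry, the reduction to the single scalar inequality ${\rm Re}\,\psi(re^{i\theta})\ge\cot\tilde\theta\,{\rm Im}\,\psi(re^{i\theta})$ on the ray, and even the reformulation $\cot\tilde\theta=\cot\gamma\,\cot(\theta/2)+\cot\theta$ (which is literally the constant in the statement, rewritten) are all correct. But the proof has a genuine gap exactly where you say you expect "the main obstacle": the transfer of the hypothesis from the imaginary axis to the ray $\arg\lambda=\theta$ is never carried out. You propose to find, for each $r$, a combining measure $\eta_r$ and to solve an extremal problem producing the coefficient $1+\cot\gamma$; none of that is done, so the quantitative heart of the proposition is missing. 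A proof plan that defers its only nontrivial estimate is not a proof.

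The paper's argument shows that no superposition is needed: effectively $\eta_r=\delta_r$ works. One uses the hypothesis \eqref{Pi} only on the imaginary axis at the \emph{same} modulus $r$, namely
$a+\int_0^\infty \frac{r^2\,\sigma(dt)}{r^2+t^2}\ge \cot\gamma\int_0^\infty\frac{rt\,\sigma(dt)}{r^2+t^2}$,
together with the elementary pointwise kernel comparison, valid for $\theta\in[\pi/2,\pi)$ because $\cos\theta\le0$,
\[
\frac{1}{r^2+t^2}\;\le\;\frac{1}{r^2+t^2+2rt\cos\theta}\;\le\;\frac{1}{(1-|\cos\theta|)(r^2+t^2)} .
\]
Bounding $r+t\cos\theta\ge r-t|\cos\theta|$, splitting the real part into the two corresponding integrals, and applying these bounds in the two directions yields
${\rm Re}\,\psi(re^{i\theta})\ge\bigl(\cot\gamma-\tfrac{|\cos\theta|}{1-|\cos\theta|}\bigr)(1-|\cos\theta|)\int_0^\infty\frac{rt\,\sigma(dt)}{r^2+t^2+2rt\cos\theta}=\alpha(\theta)\,{\rm Im}\,\psi(re^{i\theta})$
with $\alpha(\theta)=\cot\tilde\theta$; the loss factor $1-|\cos\theta|$ is precisely the origin of the coefficient $1+\cot\gamma$, and the condition $\theta<\theta_0$ from \eqref{CCcot} is exactly what keeps $\alpha(\theta)>0$. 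Your Phragm\'en--Lindel\"of reduction to the boundary ray is valid but also unnecessary: the paper handles interior points $\arg\lambda=\theta'\in(0,\theta)$ directly, using the hypothesis for $\theta'\le\pi/2$ and the monotonicity of $\alpha$ on $[\pi/2,\theta_0]$ for $\theta'>\pi/2$. If you replace your averaging scheme by this pointwise comparison, the rest of your outline goes through.
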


\begin{proof}
By (\ref{Pi}) it follows that $\psi$ has the Stieltjes representation $(a,0,\sigma).$
Note that
\begin{eqnarray}\label{psiP}
\psi(re^{i\theta})&=&a+\int_0^\infty \frac{r(r+
t\cos\theta)\,\sigma(dt)}
{r^2+t^2+2rt\cos\theta} \\
&+&i\sin \theta\int_0^\infty \frac{r t\,\sigma(dt)}
{r^2+t^2+2rt\cos\theta},\quad r>0,\quad |\theta|<\pi, \notag
\end{eqnarray}
and
\[
{\rm Im}\,\psi(re^{i\theta})>0,\quad r>0,\quad \theta\in (0,\pi).
\]
Setting in \eqref{psiP}
the value $\theta=\pi/2$ and using \eqref{Pi}, we infer that
\begin{equation}\label{imply}
a+\int_0^\infty \frac{r^2\,\sigma(dt)} {r^2+t^2}\,\ge \cot \gamma \,
\int_0^\infty \frac{rt\,\sigma(dt)} {r^2+t^2}, \qquad r>0.
\end{equation}

Moreover, note that  for every $\theta\in [\pi/2,\pi),$ and  all
$r,t>0,$
\begin{equation}\label{inref1}
\frac{1}{r^2+t^2}\le \frac{1}{r^2+t^2+2rt\cos\theta} \le
\frac{1}{(1-|\cos\theta|)(r^2+t^2)}, \qquad r,t>0.
\end{equation}
Hence, if $\theta \in [\pi/2,\theta_0],$ where $\theta_0$ is given
by \eqref{CCcot}, then by  \eqref{psiP},
\eqref{inref1} and  \eqref{imply} we obtain
\begin{eqnarray*}
{\rm Re}\,\psi(re^{i\theta})&\ge& a+\int_0^\infty \frac{r^2-r
t|\cos\theta|} {r^2+t^2+2rt\cos\theta}\,\sigma(dt)
\\
&\ge& a+\int_0^\infty \frac{r^2\,\sigma(dt)}{r^2+t^2}-
\frac{|\cos\theta|}{1-|\cos\theta|} \int_0^\infty \frac{r
t\,\sigma(dt)}{r^2+t^2}
\\
&\ge& \left(\cot \gamma -\frac{|\cos\theta|}{1-|\cos\theta|}
\right) \int_0^\infty \frac{r t} {r^2+t^2}\,\sigma(dt)
\\
&\ge& \left(\cot \gamma
-\frac{|\cos\theta|}{1-|\cos\theta|}\right)(1-|\cos\theta|)
\int_0^\infty \frac{r t\,\sigma(dt)} {r^2+t^2+2rt\cos\theta}
\\
&=&\alpha(\theta)\,{\rm Im}\,\psi(re^{i\theta}),
\end{eqnarray*}
where
\[
\alpha (\theta)=\frac{(1-|\cos\theta|)}{\sin\theta}
\left(\cot\gamma-\frac{|\cos\theta|}{1-|\cos\theta|}\right)
=\frac{1+\cot\gamma}{\sin\theta}\left(\frac{\cot \gamma}{1+\cot
\gamma}-|\cos\theta|\right).
\]
Note that
\[
\alpha(\theta_0)=0,\quad \alpha(\pi/2)=\cot\gamma\quad \mbox{and}
\quad \alpha(\theta)>0 \quad \mbox{if}\quad \theta\in
[\pi/2,\theta_0).
\]
Moreover,
\[
\alpha'(\theta)=\frac{\cot\gamma|\cos\theta|-(1+\cot\gamma)}{\sin^2\theta}
\le\frac{1}{\sin^2\theta}<0, \quad \theta\in [\pi/2,\theta_0],
\]
hence $\alpha(\theta)$ is positive  and decreasing on
$[\pi/2,\theta_0]$. Therefore, for all  $\theta'\in
(\pi/2,\theta)$ and $\theta\in (\pi/2,\theta_0)$ we have
\[
{\rm Re}\,\psi(re^{i\theta'})\ge \alpha(\theta') {\rm Im}\,\psi(re^{i\theta'}) \ge
\alpha(\theta) {\rm Im}\,\psi(re^{i\theta'}).
\]
On the other hand, if $\theta'\in (0,\pi/2]$ then, by our
assumption,
\[
{\rm Re}\,\psi(re^{i\theta})\ge \cot \gamma \,{\rm Im}\,\psi(re^{i\theta'})
\ge \alpha(\theta) {\rm Im}\,\psi(re^{i\theta'}).
\]
Thus,
\[
\psi(\overline{\Sigma}_\theta^{(+)})\subset
\overline{\Sigma}_{\tilde{\theta}}^{(+)},
\]
and, in view of  $\psi(re^{-i\theta})=\overline{\psi(re^{i\theta})}$,  the assertion \eqref{Pi1} follows.
\end{proof}

A direct consequence of Theorem \ref{Shill}, (iii), is that
\begin{equation*}
\psi \in \mathcal{CBF}, \psi \not\equiv 0, \quad \Leftrightarrow
\quad \lambda/\psi(\lambda) \in \mathcal{CBF}\quad \Leftrightarrow
\quad  \lambda \psi(1/\lambda) \in \mathcal{CBF}.
\end{equation*}
Thus,  $0\not=h \in \mathcal{CBF}$ if and only if
$\varphi(\lambda)=\lambda h(1/\lambda) \in \mathcal{CBF}.$
In view of  the latter property and Theorem \ref{Shill}, (ii)  the
following
 definition is natural.

\begin{defn}\label{Im}
A function $\varphi\in \mathcal{CBF}$ is said to be associated
with $\psi\in \mathcal{BF}$ if
\begin{equation}\label{Ac3}
\varphi(\lambda)=\lambda^{-1}\widehat \psi (\lambda^{-1}),\quad
\lambda>0.
\end{equation}
\end{defn}
The notion of  associated complete Bernstein function will be of
primary importance in this paper, and we will first collect its
several properties in Lemma \ref{Ass1} below. To this aim, the
next auxiliary lemma will be useful.

\begin{lemma}\label{Rr}
Define
\begin{equation}\label{dd}
\Delta(\lambda):=\frac{1}{1+\lambda}-e^{-\lambda},\qquad
\lambda\in\C_{+}.
\end{equation}
Then
\begin{equation}\label{Rest}
|\Delta(\lambda)|\le \frac{4\,|\lambda|^2}{(1+{\rm
Re}\,\lambda)^3},\qquad \lambda\in \C_{+}.
\end{equation}
\end{lemma}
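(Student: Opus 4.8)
The plan is to avoid the naive triangle inequality $|\Delta(\lambda)|\le |1+\lambda|^{-1}+e^{-\re\lambda}$, since it only yields decay of order $(1+\re\lambda)^{-1}$, which is hopelessly weak against the $(1+\re\lambda)^{-3}$ in the claim (indeed, any bound keeping the factor $|\lambda|^2$ outside an integral carrying only $(1+\re\lambda)^{-1}$ decay would even blow up as $\re\lambda\to\infty$). Instead I would first peel off the factor responsible for the slow decay. Setting
\[
g(\lambda):=(1+\lambda)\Delta(\lambda)=1-(1+\lambda)e^{-\lambda},
\]
one has $\Delta(\lambda)=g(\lambda)/(1+\lambda)$ with $1+\lambda\neq 0$ on $\C_+$ and $|1+\lambda|\ge 1+\re\lambda$. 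Hence it suffices to prove $|g(\lambda)|\le 4|\lambda|^2/(1+\re\lambda)^2$.

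The key point is that $g$ vanishes to second order at the origin. Indeed $g(0)=0$, and a direct differentiation gives $g'(\lambda)=\lambda e^{-\lambda}$, so $g'(0)=0$ as well. Since $w\mapsto we^{-w}$ is entire, integrating along the segment $w=t\lambda$, $t\in[0,1]$, produces the representation
\[
g(\lambda)=\int_0^{\lambda}w\,e^{-w}\,dw=\lambda^2\int_0^1 t\,e^{-t\lambda}\,dt,
\]
which displays the double zero explicitly as the prefactor $\lambda^2$ and isolates the remaining decay in the integral.

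From here the estimate is routine. Using $|e^{-t\lambda}|=e^{-t\re\lambda}$ and $t\ge 0$ on $[0,1]$,
\[
|g(\lambda)|\le |\lambda|^2\int_0^1 t\,e^{-t\re\lambda}\,dt,
\]
and I would bound the real integral by $4/(1+x)^2$ with $x:=\re\lambda\ge 0$ via a two-case split: for $x\ge 1$ compare with $\int_0^{\infty}t\,e^{-tx}\,dt=x^{-2}$ and use $(1+x)^2\le 4x^2$; for $0\le x\le 1$ bound the integrand by $t$, giving $\int_0^1 t\,dt=\tfrac12\le 4(1+x)^{-2}$. Combining this with the factor $(1+\lambda)^{-1}$ yields $|\Delta(\lambda)|\le 4|\lambda|^2/(1+\re\lambda)^3$.

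The only genuine obstacle here is conceptual rather than computational: one must recognize that the correct power $(1+\re\lambda)^{-3}$ can only emerge after dividing out $(1+\lambda)^{-1}$ and then detecting the second-order vanishing of the numerator $g$. Once the representation $g(\lambda)=\lambda^2\int_0^1 t\,e^{-t\lambda}\,dt$ is in hand, the constant $4$ is comfortable — the same argument in fact delivers the sharper constant $2$.
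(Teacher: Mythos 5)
Your proof is correct, and it takes a genuinely different route from the paper's. The paper invokes a ready-made integral representation from an earlier work of the authors,
\[
\Delta(\lambda)=\lambda^2\int_0^\infty e^{-\lambda s}G(s)\,ds,\qquad
G(s)=\chi(1-s)\,(s-1+e^{-s})+\chi(s-1)\,e^{-s},
\]
i.e.\ it realizes $\Delta(\lambda)/\lambda^2$ directly as a Laplace transform of an explicit piecewise kernel on $(0,\infty)$, and then estimates the two pieces of the kernel separately (using $s-1+e^{-s}\le s^2/2$ and $e^{-s}\le 2/(1+s)^2$) to extract the full $(1+{\rm Re}\,\lambda)^{-3}$ decay from the integral in one shot. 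You instead peel off the factor $(1+\lambda)^{-1}$ first, which contributes one power of $(1+{\rm Re}\,\lambda)^{-1}$ for free, and then exploit the double zero of $g(\lambda)=1-(1+\lambda)e^{-\lambda}$ at the origin via the finite-interval representation $g(\lambda)=\lambda^2\int_0^1 t\,e^{-t\lambda}\,dt$; the remaining two powers come from the elementary bound $\int_0^1 t e^{-tx}\,dt\le 4(1+x)^{-2}$. Your computations check out: $g'(\lambda)=\lambda e^{-\lambda}$, the substitution $w=t\lambda$ is legitimate since the integrand is entire, $|1+\lambda|\ge 1+{\rm Re}\,\lambda$ on $\C_{+}$, and both cases of the two-case split are verified correctly. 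What your approach buys is self-containedness (no appeal to the external representation and its kernel) and a cleaner conceptual explanation of where the exponent $3$ comes from ($1$ from the rational factor, $2$ from the order of vanishing at $0$); what the paper's approach buys is a single uniform Laplace-transform formula for $\Delta/\lambda^2$ that fits the Laplace-transform machinery used throughout, and which the authors already had on hand. Your side remark that the same argument gives the constant $2$ is also correct, since $\sup_{x\ge 0}(1+x)^2\int_0^1 te^{-tx}\,dt<2$.
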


\begin{proof}
We use the integral representation from \cite[p. 3056, Eq. 4.21)]{GT}:
\begin{equation}\label{Delta}
\Delta(\lambda)=\lambda^2\int_0^\infty e^{-\lambda s} G(s)\,ds,
\qquad \lambda \in \C_+,
\end{equation}
where
\[
G(s)=\chi(1-s)(s-1+e^{-s})+\chi(s-1)e^{-s},\qquad s>0,
\]
and $\chi$ stands for the characteristic function of $(0,\infty).$
Since
\[
s-1+e^{-s}\le \frac{s^2}{2},\qquad e^{-s}\le
\frac{2}{(s+1)^2},\qquad s>0,
\]
 (\ref{Delta}) implies that
\begin{eqnarray*}
|\lambda|^{-2} |\Delta(\lambda)|&\le& \int_0^1 e^{-s {\rm Re}\,
\lambda}(s-1+e^{-s})\,ds+ \int_1^\infty e^{-s {\rm Re}\, \lambda}
e^{-s}\,ds
\\
&\le& \frac{e}{2} \int_0^1 e^{-s({\rm Re}\, \lambda+1)} s^2\,ds+
\frac{e^{-{\rm Re}\, \lambda-1}}{{\rm Re}\, \lambda+1}
\\
&\le &\frac{4}{({\rm Re}\, \lambda +1)^3}, \qquad  \lambda\in
\C_{+}.
\end{eqnarray*}
\end{proof}
\begin{lemma}\label{Ass1}
Let $\varphi\in \mathcal{CBF}$ be  associated with $\psi\in
\mathcal{BF}$ and let
\begin{equation}\label{FF}
\psi(\lambda)=a+b\lambda+\int_{0+}^\infty (1-e^{-\lambda s})\,\nu(ds),\quad \lambda>0.
\end{equation}
Then
\begin{itemize}
\item [a)]\,  $\varphi$ has the representation
\begin{equation}\label{ARep}
\varphi(\lambda)=a+b\lambda+\int_{0+}^\infty \frac{\lambda
s\,\nu(ds)}{1+\lambda s},\quad \lambda>0.
\end{equation}
\item [b)] \, the inequality
\begin{equation}\label{ARep1}
{\rm Re}\,\psi(\lambda)\ge \varphi({\rm Re}\,\lambda),\quad
\lambda\in \C_{+},
\end{equation}
holds.
\item [c)]\, the estimate
\begin{equation}\label{L12}
|\psi(\lambda)-\varphi(\lambda)|\le 2 |\lambda|^2 \varphi''({\rm
Re}\,\lambda),\quad \lambda\in \C_{+},
\end{equation}
\end{itemize}
holds.

$d)$\,  $\psi$ is bounded if and only if  $\varphi$ is  bounded,
and then for any $\beta\in (0,\pi/2),$
\[
\lim_{\lambda\to\infty,\,\lambda\in \Sigma_\beta}\,\psi(\lambda)=
\lim_{\lambda\to\infty,\,\lambda\in
\Sigma_\beta}\,\varphi(\lambda).
\]
\end{lemma}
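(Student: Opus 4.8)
The plan is to derive all four assertions from the explicit integral representation in a), so I would prove a) first. Inserting the L\'evy--Hintchine representation \eqref{FF} into $\widehat\psi(z)=\int_0^\infty e^{-sz}\psi(s)\,ds$ and integrating term by term (the interchange of the $s$-integral with the $\nu$-integral being justified by Tonelli, since all integrands are nonnegative), the elementary identities $\int_0^\infty e^{-sz}\,ds=1/z$, $\int_0^\infty s\,e^{-sz}\,ds=1/z^2$ and $\int_0^\infty e^{-sz}(1-e^{-s\tau})\,ds=\tau/[z(z+\tau)]$ yield
\[
\widehat\psi(z)=\frac{a}{z}+\frac{b}{z^2}+\int_{0+}^\infty\frac{\tau\,\nu(d\tau)}{z(z+\tau)}.
\]
There is no convergence problem, as $\psi$ grows at most linearly and ${\rm Re}(\lambda^{-1})=\lambda^{-1}>0$ for $\lambda>0$. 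Substituting $z=\lambda^{-1}$ and multiplying by $\lambda^{-1}$ then gives \eqref{ARep} after simplification.

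For b) and c) I would subtract the two representations \eqref{FF} and \eqref{ARep}, whereupon the $a+b\lambda$ parts cancel. For b), writing $\lambda=x+iy$ with $x={\rm Re}\,\lambda>0$, it suffices to prove the pointwise bound ${\rm Re}(1-e^{-\lambda s})=1-e^{-xs}\cos(ys)\ge xs/(1+xs)$; since $\cos(ys)\le 1$ this reduces to $1-e^{-u}\ge u/(1+u)$ with $u=xs$, i.e.\ to $e^{u}\ge 1+u$, and integrating against $\nu$ gives \eqref{ARep1}. For c), using $\tfrac{\lambda s}{1+\lambda s}=1-\tfrac{1}{1+\lambda s}$ I would recognise the remaining integrand as the function $\Delta$ of \eqref{dd},
\[
(1-e^{-\lambda s})-\frac{\lambda s}{1+\lambda s}=\frac{1}{1+\lambda s}-e^{-\lambda s}=\Delta(\lambda s),
\]
so that $\psi(\lambda)-\varphi(\lambda)=\int_{0+}^\infty\Delta(\lambda s)\,\nu(ds)$. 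Applying the estimate \eqref{Rest} of Lemma \ref{Rr} with argument $\lambda s$ (so ${\rm Re}(\lambda s)=s\,{\rm Re}\,\lambda$) and pulling out $|\lambda|^2$ bounds the difference by $4|\lambda|^2\int_{0+}^\infty s^2(1+s\,{\rm Re}\,\lambda)^{-3}\,\nu(ds)$. Differentiating \eqref{ARep} twice gives $\varphi''(x)=-2\int_{0+}^\infty s^2(1+xs)^{-3}\,\nu(ds)$, so this last integral equals $-\tfrac12\varphi''({\rm Re}\,\lambda)$ and \eqref{L12} follows (the right-hand side being understood as $2|\lambda|^2|\varphi''({\rm Re}\,\lambda)|$, as $\varphi''\le 0$).

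For d), monotone convergence in \eqref{FF} and \eqref{ARep} shows that each of $\psi,\varphi$ is bounded on $(0,\infty)$ precisely when $b=0$ and $\nu((0,\infty))<\infty$, which gives the stated equivalence. Assuming boundedness, for $\lambda\in\Sigma_\beta$ with $|\lambda|\to\infty$ one has ${\rm Re}\,\lambda\ge|\lambda|\cos\beta\to\infty$, so the integrands $1-e^{-\lambda s}$ and $\tfrac{\lambda s}{1+\lambda s}$ converge pointwise to $1$. The one point requiring care is the uniform domination needed to pass to the limit: $|1-e^{-\lambda s}|\le 2$ is immediate, while for $\tfrac{\lambda s}{1+\lambda s}$ I would invoke the sector estimate \eqref{number}, which gives $|1+\lambda s|\ge\cos(\beta/2)(1+|\lambda s|)$ and hence $|\tfrac{\lambda s}{1+\lambda s}|\le 1/\cos(\beta/2)$; both dominating constants are $\nu$-integrable since $\nu$ is finite. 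Dominated convergence then yields $\lim\psi(\lambda)=\lim\varphi(\lambda)=a+\nu((0,\infty))$ along $\Sigma_\beta$. The only genuine bookkeeping in the whole lemma is in c) --- spotting the function $\Delta$ and keeping track of the sign when identifying the integral with $\varphi''$; everything else is a direct consequence of the representation a).
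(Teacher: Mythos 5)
Your proof is correct and follows essentially the same route as the paper's: a) by direct computation of $\lambda^{-1}\widehat\psi(\lambda^{-1})$ from \eqref{Ac3}, b) via the pointwise inequality $1-e^{-u}\ge u/(1+u)$, c) via the function $\Delta$ of \eqref{dd} together with the bound \eqref{Rest}, and d) by finiteness of $\nu$ plus dominated convergence. The only (harmless) deviations are that in d) you compute both limits explicitly as $a+\nu((0,\infty))$ instead of showing $\psi(\lambda)-\varphi(\lambda)\to 0$ via the $\Delta$-representation as the paper does, and you correctly observe that the right-hand side of \eqref{L12} must be read as $2|\lambda|^2\,|\varphi''({\rm Re}\,\lambda)|$ since $\varphi''\le 0$ (a sign slip in the paper's own statement).
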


\begin{proof}
The assertion $a)$ follows directly from (\ref{FF}) and
(\ref{Ac3}).

To prove $b)$ we note that
\[
1-e^{-\tau}\ge \frac{\tau}{1+\tau},\quad \tau>0.
\]
Then, setting $u={\rm Re}\,\lambda>0,$ by \eqref{ARep1},
(\ref{FF}) and (\ref{ARep}), we obtain
\begin{eqnarray*}
{\rm Re}\,\psi(\lambda)\ge \psi(u)
&=& a+b u+\int_{0+}^\infty (1-e^{-u s})\,\nu(ds)\\
&\ge& a+b u+\int_{0+}^\infty \frac{u s\,\nu(ds)}{1+u s}\\
&=&\varphi(u),
\end{eqnarray*}
so that (\ref{ARep1}) holds.

Let us now prove   $c).$ Observe that
\begin{equation}\label{Delta1}
\psi(\lambda)-\varphi(\lambda)=\int_{0+}^\infty \Delta(\lambda s)\,\nu(ds),\quad \lambda\in \C_{+},
\end{equation}
where $\Delta$ is defined by \eqref{dd}, and by \eqref{ARep},
\[
\varphi''(\lambda)=-2\int_{0+}^\infty \frac{
s^2\,\nu(ds)}{(1+\lambda s)^3}.
\]
Then, using (\ref{Rest}), it follows that
\[
|\psi(\lambda)-\varphi(\lambda)|\le \int_{0+}^\infty
|\Delta(\lambda s)|\,\nu(ds) \le 4|\lambda|^2\int_{0+}^\infty
\frac{s^2\,\nu(ds)}{(1+us)^3}\le  2 |\lambda|^2 \varphi''(u),
\]
which is \eqref{L12}.

To prove the first statement in d), it suffices to note that
boundedness of either $\psi$ or $\varphi$ is equivalent to
boundedness of  a measure $\nu$ in \eqref{FF} and \eqref{ARep} by Fatou's theorem. Finally, since
$|\Delta(\lambda)|\le 2$, $\lambda\in \C_{+},$  and
\[
\lim_{\lambda\to\infty,\,\lambda\in
\Sigma_\beta}\,|\Delta(\lambda)|=0,
\]
for any $\beta\in (0,\pi/2),$ \eqref{Delta1} implies the second assertion in  $d)$ by the
bounded convergence theorem.
\end{proof}

One can also give a counterpart of \eqref{L12} with $\varphi''$
replaced by $\varphi'$ as the following corollary of Lemma
\ref{Ass1}, c) shows.

\begin{cor}\label{ACor1}
Let $\varphi\in \mathcal{CBF}$ is  associated with $\psi\in
\mathcal{BF}$. Then for every $\beta\in (0,\pi/2),$
\begin{equation}\label{L22}
|\psi(\lambda)-\varphi(\lambda)|\le \frac{4|\lambda|}{\cos\beta}\,
\varphi'({\rm Re}\,\lambda),\quad \lambda\in \overline{\Sigma}_{\beta}\setminus\{0\}.
\end{equation}
\end{cor}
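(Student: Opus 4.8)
The plan is to bootstrap the desired estimate \eqref{L22} directly from the bound \eqref{L12} of Lemma \ref{Ass1}, c). The point is that \eqref{L12} carries a factor $|\lambda|^2$ together with the \emph{second} derivative $\varphi''$, whereas \eqref{L22} carries only $|\lambda|$ together with the \emph{first} derivative $\varphi'$. So I would trade one power of $|\lambda|$ for the factor $1/\cos\beta$ by exploiting the restriction $\lambda\in\overline{\Sigma}_\beta$, and simultaneously absorb the extra power of $|\lambda|$ into a comparison of $\varphi''$ with $\varphi'$.

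Concretely, I would first differentiate the representation \eqref{ARep} to record, for $u>0$,
\[
\varphi'(u)=b+\int_{0+}^\infty\frac{s\,\nu(ds)}{(1+us)^2},\qquad
\varphi''(u)=-2\int_{0+}^\infty\frac{s^2\,\nu(ds)}{(1+us)^3},
\]
so that \eqref{L12} reads $|\psi(\lambda)-\varphi(\lambda)|\le 2|\lambda|^2|\varphi''({\rm Re}\,\lambda)|$. The key elementary inequality is the pointwise bound $\frac{us}{1+us}\le 1$ for all $s,u>0$, which upon factoring $\frac{us^2}{(1+us)^3}=\frac{us}{1+us}\cdot\frac{s}{(1+us)^2}$ gives
\[
u\,|\varphi''(u)|=2\int_{0+}^\infty\frac{us^2\,\nu(ds)}{(1+us)^3}
\le 2\int_{0+}^\infty\frac{s\,\nu(ds)}{(1+us)^2}\le 2\,\varphi'(u),\qquad u>0.
\]
This is the step that replaces $\varphi''$ by $\varphi'$ at the cost of the universal constant $2$.

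It then remains to invoke the sectorial geometry: for $\lambda\in\overline{\Sigma}_\beta\setminus\{0\}$ one has ${\rm Re}\,\lambda=|\lambda|\cos(\arg\lambda)\ge|\lambda|\cos\beta$, whence $|\lambda|\le {\rm Re}\,\lambda/\cos\beta$. Writing $u={\rm Re}\,\lambda$ and combining the two displayed estimates with \eqref{L12} yields
\[
|\psi(\lambda)-\varphi(\lambda)|\le 2|\lambda|\cdot|\lambda|\,|\varphi''(u)|
\le \frac{2|\lambda|}{\cos\beta}\,u\,|\varphi''(u)|
\le \frac{4|\lambda|}{\cos\beta}\,\varphi'(u),
\]
which is precisely \eqref{L22}.

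I do not expect a genuine obstacle here, since the argument is a short consequence of \eqref{L12}; the only substantive point is the comparison $u\,|\varphi''(u)|\le 2\varphi'(u)$, and I would make sure to derive it from the Stieltjes-type representation \eqref{ARep} as above rather than from the general Bernstein inequalities, as the integral computation gives the sharp constant transparently and avoids any sign ambiguity in \eqref{L12}.
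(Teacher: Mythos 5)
Your proof is correct and follows essentially the same route as the paper: both combine the bound \eqref{L12} with the derivative comparison $u\,|\varphi''(u)|\le 2\varphi'(u)$ and the sector inequality $|\lambda|\le {\rm Re}\,\lambda/\cos\beta$. The only difference is that you derive the derivative comparison explicitly from the representation \eqref{ARep} (thereby also resolving the sign implicit in \eqref{L12}), whereas the paper simply cites it as a standard property of complete Bernstein functions.
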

\begin{proof}
Note that  $\varphi\in \mathcal{CBF}$ implies $s\varphi''(s)\le 2
\varphi'(s)$, $s>0$. Using  (\ref{L12}) and observing that
\[
|\lambda|\le \frac{{\rm Re}\,\lambda}{\cos\beta},\quad \lambda\in
\Sigma_\beta,\quad \beta\in (0,\pi/2),
\]
we arrive at  (\ref{L22}).
\end{proof}

Now we are ready to prove the main result of this section
providing an estimate for the ``resolvents'' of a Bernstein
function and the complete Bernstein function associated to it.

\begin{thm}\label{BTR}
Let $\psi$ be a Bernstein function and let $\varphi$ be the complete
Bernstein function associated with $\psi$. Let  $\omega\in
(\pi/2,\pi)$ and $z\in \Sigma_\omega$ be fixed.
If
\begin{equation}\label{defr}
r(\lambda;z):=\frac{1}{z+\psi(\lambda)}-\frac{1}{z+\varphi(\lambda)},\qquad
\lambda\in \Sigma_{\pi-\omega},
\end{equation}
 then the function $r(\cdot;z)$ is holomorphic in $\Sigma_{\pi-\omega}$
 and for every $\beta \in (0,\pi-\omega):$
\begin{equation}\label{need}
\int_{\partial\Sigma_\beta}|r(\lambda;z)|\frac{|d\lambda|}{|\lambda|}\le
\frac{8}{\cos^2\beta\cos^2((\omega+\beta)/2)\,|z|},\qquad \quad
z\in \Sigma_\omega.
\end{equation}
\end{thm}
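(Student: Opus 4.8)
The plan is to first secure holomorphy by checking that neither denominator in \eqref{defr} can vanish, and then to reduce the boundary integral, ray by ray, to a one-dimensional integral that telescopes. Since $\omega>\pi/2$ we have $\pi-\omega<\pi/2$, so $\Sigma_{\pi-\omega}\subset\C_+$ and both $\psi$ and the associated $\varphi$ (which lies in $\mathcal{CBF}\subset\mathcal{BF}$) are holomorphic there. I would fix $\beta\in(0,\pi-\omega)$, note that then $\beta\in(0,\pi/2)$ and $\omega+\beta<\pi$, and invoke Proposition~\ref{P1}(i) with $\gamma=\omega$ to obtain, for $\lambda\in\overline{\Sigma}_\beta$,
$$|z+\psi(\lambda)|\ge\cos((\omega+\beta)/2)\,(|z|+|\psi(\lambda)|)\ge\cos((\omega+\beta)/2)\,|z|>0,$$
together with the identical bound for $|z+\varphi(\lambda)|$. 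Letting $\beta\uparrow\pi-\omega$, this shows both denominators are nowhere zero on $\Sigma_{\pi-\omega}$, so $r(\cdot;z)$ is holomorphic there.

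\textbf{Pointwise bound on $\partial\Sigma_\beta$.} Next I would rewrite $r(\lambda;z)=(\varphi(\lambda)-\psi(\lambda))/[(z+\psi(\lambda))(z+\varphi(\lambda))]$. On $\partial\Sigma_\beta$ one has $\lambda=te^{\pm i\beta}$, so $|\lambda|=t$ and ${\rm Re}\,\lambda=t\cos\beta$. The numerator is handled by Corollary~\ref{ACor1}, giving $|\psi(\lambda)-\varphi(\lambda)|\le(4t/\cos\beta)\,\varphi'(t\cos\beta)$. For the denominator I would use Lemma~\ref{Ass1}(b), which yields $|\psi(\lambda)|\ge{\rm Re}\,\psi(\lambda)\ge\varphi({\rm Re}\,\lambda)$, and Proposition~\ref{P1}(ii) applied to $\varphi$, which yields $|\varphi(\lambda)|\ge\varphi({\rm Re}\,\lambda)$; combined with the two lower bounds already obtained these give
$$|(z+\psi(\lambda))(z+\varphi(\lambda))|\ge\cos^2((\omega+\beta)/2)\,(|z|+\varphi(t\cos\beta))^2.$$

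\textbf{Reduction and telescoping.} Putting the two estimates together, on each ray
$$|r(te^{\pm i\beta};z)|\,\frac1t\le\frac{4\,\varphi'(t\cos\beta)}{\cos\beta\,\cos^2((\omega+\beta)/2)\,(|z|+\varphi(t\cos\beta))^2}.$$
Integrating over $t\in(0,\infty)$ and substituting $u=t\cos\beta$ contributes a further factor $1/\cos\beta$ and leaves the scalar integral $\int_0^\infty\varphi'(u)(|z|+\varphi(u))^{-2}\,du$. Because $\varphi$ is nondecreasing this telescopes to $(|z|+\varphi(0))^{-1}-(|z|+\varphi(+\infty))^{-1}\le|z|^{-1}$, so each of the two rays contributes at most $4\,[\cos^2\beta\,\cos^2((\omega+\beta)/2)\,|z|]^{-1}$; adding them gives exactly \eqref{need}. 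The one genuinely delicate point is to keep \emph{both} growth factors $|z|+\varphi({\rm Re}\,\lambda)$ in the denominator: weakening one of them merely to $|z|$ would reduce the final integral to $\int_0^\infty\varphi'/(|z|+\varphi)$, which diverges logarithmically when $\varphi$ is unbounded, so retaining the square is what makes the estimate converge and produce the claimed $|z|^{-1}$ decay.
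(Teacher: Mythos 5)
Your proof is correct and follows essentially the same route as the paper's: the same decomposition of $r$ as $(\varphi-\psi)/[(z+\psi)(z+\varphi)]$, the numerator bound from Corollary~\ref{ACor1}, the denominator bound $(|z|+\varphi(\mathrm{Re}\,\lambda))^2$ via Proposition~\ref{P1} and Lemma~\ref{Ass1}(b), and the telescoping integral $\int_0^\infty \varphi'(u)(|z|+\varphi(u))^{-2}\,du\le |z|^{-1}$. The only cosmetic difference is that you derive non-vanishing of the denominators from the quantitative estimate \eqref{FEH} rather than directly from the sector-preservation Proposition~\ref{PrB}, which amounts to the same thing.
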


\begin{proof}
Note first that $\pi-\omega\in (0,\pi/2)$.
Since by Proposition \ref{PrB}, the functions $\psi$ and $\varphi$ preserve sectors,
$z+\psi$ and $z+\varphi$ are not zero at each point from
$\Sigma_{\pi-\omega}.$   As   $\psi$ and $\varphi$ are holomorphic
in $\mathbb C_+,$ the holomorphicity of $r(\cdot, z)$ in
$\Sigma_{\pi-\omega}$ follows.

Let now
$\beta\in (0,\pi-\omega)$ and
$0\not=\lambda\in \overline{\Sigma}_\beta$, $z\in \Sigma_\omega.$
If
\[
K=\cos((\omega+\beta)/2),
\]
then by Proposition \ref{P1}, (i), we have
\begin{equation}\label{eqqq}
K^2|r(\lambda;z)| \le
\frac{|\varphi(\lambda)-\psi(\lambda)|}
{(|z|+|\psi(\lambda)|)(|z|+|\varphi(\lambda)|)}.
\end{equation}
Let us estimate the numerator and the denominator in the right
hand side of \eqref{eqqq} separately. By \eqref{L22},
\[
|\varphi(\lambda)-\psi(\lambda)|\le \frac{4 |\lambda|}{\cos\beta}\varphi'({\rm Re}\,\lambda),
\]
and, moreover,  (\ref{Re})  and \eqref{ARep1} yield
\begin{eqnarray*}
(|z|+|\psi(\lambda)|)(|z|+|\varphi(\lambda)|)
&\ge& (|z|+{\rm Re}\, \psi(\lambda))(|z|+{\rm Re}\, \varphi(\lambda))\\
&\ge&(|z|+\varphi({\rm Re}\, \lambda))^2),\qquad r>0.
\end{eqnarray*}
Thus, if $\lambda=te^{\pm i\beta}$, $t>0$, then
\begin{equation}\label{Cau}
K^2|r(\lambda;z)| \le \frac{4 t}{\cos\beta}
\frac{\varphi'(t\cos\beta)}{ (|z|+\varphi(t\cos\beta))^2}.
\end{equation}
Hence,
\begin{eqnarray*}
K^2\int_{\partial \Sigma_\beta}
|r(\lambda;z)|\,\frac{|d\lambda|}{|\lambda|} &\le&
\frac{8}{\cos\beta}\int_0^\infty
\frac{\varphi'(t\cos\beta)\,dt}{(|z|+\varphi(t\cos\beta))^2}\\
&\le& \frac{8}{\cos^2\beta\,|z|},
\end{eqnarray*}
and the estimate (\ref{need}) follows.
\end{proof}

\begin{cor}
If $r(\lambda;z)$ is defined as in Theorem \ref{BTR},  then
for all $z\in \Sigma_\omega$ and $\lambda \in \Sigma_{\beta}$:
\begin{equation}\label{funC}
r(\lambda;z)=\frac{1}{2\pi i} \int_{\partial\Sigma_\beta}\,
\frac{r(\mu;z)\, d\mu}{\mu-\lambda},
\end{equation}
and
\begin{equation}\label{SS2}
\int_{\partial \Sigma_\beta}\,\frac{\lambda^k
r(\lambda;z)}{(\lambda+1)^2}\,d\lambda = 0,\qquad k=0,1,
\end{equation}
where the contour $\partial \Sigma_\beta$ is oriented counterclockwise.
\end{cor}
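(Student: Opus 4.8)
The plan is to derive both \eqref{funC} and \eqref{SS2} from Cauchy's integral formula and Cauchy's theorem applied to the relevant holomorphic integrands on a truncated sector, and then to pass to the limit. First I would fix $\lambda\in\Sigma_\beta$ and $z\in\Sigma_\omega$, and for $0<\epsilon<|\lambda|<R$ consider the truncated sector $D_{\epsilon,R}:=\Sigma_\beta\cap\{\epsilon<|\mu|<R\}$, whose positively (counterclockwise) oriented boundary consists of two straight segments lying on $\partial\Sigma_\beta$ together with two circular arcs $C_\epsilon,C_R$ of radii $\epsilon$ and $R$. Since $\beta<\pi-\omega$, we have $\overline{\Sigma}_\beta\setminus\{0\}\subset\Sigma_{\pi-\omega}$, so by Theorem \ref{BTR} the function $r(\cdot;z)$ is holomorphic on a neighbourhood of $\overline{D_{\epsilon,R}}$; moreover $-1\notin\overline{\Sigma}_\beta$, so $\mu\mapsto\mu^k r(\mu;z)/(\mu+1)^2$ is holomorphic there as well. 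Cauchy's formula then gives $r(\lambda;z)=\frac{1}{2\pi i}\oint_{\partial D_{\epsilon,R}}\frac{r(\mu;z)}{\mu-\lambda}\,d\mu$, while Cauchy's theorem gives $\oint_{\partial D_{\epsilon,R}}\frac{\mu^k r(\mu;z)}{(\mu+1)^2}\,d\mu=0$ for $k=0,1$. Both claims reduce to showing that the contributions of $C_\epsilon$ and $C_R$ vanish as $\epsilon\to0$ and $R\to\infty$.

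For the inner arc $C_\epsilon$ the estimate is immediate. Since $\psi(0+)=\varphi(0+)=a$, the bound \eqref{L22} together with Proposition \ref{P1}(i) yields $|r(\mu;z)|\le C|\mu|$ for small $|\mu|$, with $C$ depending only on $z$ and $\beta$. The remaining factors $1/(\mu-\lambda)$ and $\mu^k/(\mu+1)^2$ are bounded near the origin, and the length of $C_\epsilon$ is $2\beta\epsilon$, so the contribution of $C_\epsilon$ is $O(\epsilon^2)$ in both integrals and tends to $0$.

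The crux is the outer arc $C_R$, where I would prove that $\sup_{|\mu|=R,\,|\arg\mu|\le\beta}|r(\mu;z)|\to0$ as $R\to\infty$. Bounding the denominator by Proposition \ref{P1}(i) and \eqref{ARep1}, and the numerator by \eqref{L22}, exactly as in the proof of Theorem \ref{BTR}, and then applying the case $k=1$ of \eqref{Jac2} to $\varphi$ (i.e. $s\varphi'(s)\le\varphi(s)$ with $s=R\cos\beta$), one obtains $|r(\mu;z)|\le \frac{C}{\cos^2\beta}\,\frac{\varphi(R\cos\beta)}{(|z|+\varphi(R\cos\beta))^2}$ uniformly on $C_R$. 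If $\varphi$ (equivalently $\psi$) is unbounded, then $\varphi(R\cos\beta)\to\infty$ and the right-hand side tends to $0$. If $\varphi$ is bounded, the estimate above no longer forces decay, and here I would instead invoke Lemma \ref{Ass1}(d): then the measure $\nu$ in \eqref{FF} is finite and, by dominated convergence in \eqref{Delta1} (using $|\Delta(\mu s)|\le 2$ and $|\Delta(w)|\le 4|w|^2/(1+|w|\cos\beta)^3$ for $w\in\overline{\Sigma}_\beta$), $\psi(\mu)-\varphi(\mu)\to0$ uniformly in $\arg\mu\in[-\beta,\beta]$, so that $|r(\mu;z)|\le |\psi(\mu)-\varphi(\mu)|/(\cos^2((\omega+\beta)/2)\,|z|^2)\to0$. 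In either case $\sup_{C_R}|r(\cdot;z)|\to0$; since the length of $C_R$ is $2\beta R$, $|\mu-\lambda|\ge R-|\lambda|$, and $|\mu^k/(\mu+1)^2|=O(R^{k-2})$ for $k\in\{0,1\}$, the contributions of $C_R$ to both integrals tend to $0$. Letting $\epsilon\to0$ and $R\to\infty$, and noting that $\partial\Sigma_\beta$ inherits the counterclockwise orientation of $\partial D_{\epsilon,R}$, we obtain \eqref{funC} and \eqref{SS2}.

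The main obstacle is precisely the decay of $r(\cdot;z)$ at infinity along $C_R$: the crude bound above fails to vanish when $\psi$ is bounded, so this case must be handled separately via the finiteness of $\nu$ and the coincidence of the radial limits of $\psi$ and $\varphi$ from Lemma \ref{Ass1}(d). Everything else is routine contour estimation, and the absolute convergence of the limiting integrals over $\partial\Sigma_\beta$ is guaranteed by \eqref{need}.
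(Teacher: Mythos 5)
Your proof is correct and follows essentially the same route as the paper: one first shows that $r(\cdot;z)=o(1)$ uniformly in $\overline{\Sigma}_\beta$ as $|\lambda|\to\infty$, splitting into the cases of unbounded and bounded $\psi$ exactly as the paper does (via \eqref{Cau}, \eqref{Re}, \eqref{Jac2} in the first case and \eqref{eqqq} with Lemma \ref{Ass1}, d) in the second), and then runs the standard truncated-sector Cauchy argument whose convergence is secured by \eqref{need}. The only quibble is your claim that $|r(\mu;z)|\le C|\mu|$ near the origin, which does not follow from \eqref{L22} when $\varphi'(0+)=\infty$; this is harmless, since the trivial bound $|r(\mu;z)|\le 2/(\cos((\omega+\beta)/2)\,|z|)$ already makes the inner-arc contribution $O(\epsilon)$.
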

\begin{proof}
If  $\psi$ is unbounded then $\varphi$ is unbounded as well by
Lemma \ref{Ass1}, d), so using (\ref{Cau}), \eqref{Re} and
\eqref{Jac2} we obtain that
\begin{equation}\label{L10}
|r(\lambda;z)|=\mbox{o}(1)\quad \mbox{uniformly in}\quad
\lambda\in \overline{\Sigma}_\beta,\quad \lambda\to\infty,
\end{equation}
for any $z\in \Sigma_\omega$. If  $\psi$ is bounded then
(\ref{L10}) follows directly from (\ref{eqqq}) and Lemma
\ref{Ass1}, d).

Now (\ref{L10}), (\ref{need}) and a standard argument based on
Cauchy's integral formula yield the representation \eqref{funC}.

Finally, \eqref{SS2} is a consequence of (\ref{L10}) and
(\ref{need}).
\end{proof}

Finally, we mention  a property of Bernstein functions which is at
the heart of the notion of subordination. To formulate it, recall
that a family of positive Radon measures $(\mu_t)_{t\ge 0}$ on
$[0,\infty)$ is called a vaguely continuous convolution semigroup
of subprobability measures if for all $t,s \ge 0,$
\begin{equation}\label{vague}
\mu_t([0,\infty))\le 1, \qquad \mu_{t+s}=\mu_t*\mu_s, \qquad
\text{and} \qquad \mbox{vague}-\lim_{t\to0+}\mu_t=\delta_0,
\end{equation}
where $\delta_0$ stands for the Dirac measure at zero.  The following classical result due to Bochner
can be found e.g. in \cite[Theorem 5.2]{SchilSonVon2010}.

\begin{thm}\label{Bochner}
The function $\psi:[0,\infty)\to (0,\infty)$ is Bernstein if and
only if there exists a vaguely continuous convolution semigroup
of subprobability measures $(\mu_t)_{t \ge 0}$ on $[0,\infty)$  such that
\begin{equation}\label{CMonG}
\widehat \mu_t(z)=\int_0^\infty e^{-z s}\,\mu_t(ds)=e^{-t\psi(z)},
\qquad z \ge 0,
\end{equation}
for all $t \ge 0.$
\end{thm}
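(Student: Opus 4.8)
The statement is the classical Bochner correspondence, and the plan is to route everything through two standard facts: the Hausdorff--Bernstein--Widder theorem (a function on $(0,\infty)$ is completely monotone precisely when it is the Laplace transform of a unique positive Radon measure on $[0,\infty)$), and the composition principle that $f\circ g$ is completely monotone whenever $f$ is completely monotone and $g\in\mathcal{BF}$. Applying the latter to $f(x)=e^{-x}$ and $g=t\psi$ shows that, for $\psi\in\mathcal{BF}$, the function $z\mapsto e^{-t\psi(z)}$ is completely monotone for every $t>0$; conversely, since a Laplace transform of a positive measure is automatically completely monotone, this property characterizes $\mathcal{BF}$. This equivalence is the engine of the whole proof.

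For the direction $\psi\in\mathcal{BF}\Rightarrow$ existence of $(\mu_t)$: fix $t>0$; since $e^{-t\psi}$ is completely monotone, Widder's theorem produces a unique positive Radon measure $\mu_t$ on $[0,\infty)$ with $\widehat{\mu_t}(z)=e^{-t\psi(z)}$ for $z\ge 0$. The subprobability bound is immediate from $\mu_t([0,\infty))=\widehat{\mu_t}(0)=e^{-t\psi(0+)}\le 1$, using $\psi\ge 0$. For the semigroup law I would combine the multiplicativity $e^{-(t+s)\psi}=e^{-t\psi}e^{-s\psi}$ with the convolution theorem $\widehat{\mu_t*\mu_s}=\widehat{\mu_t}\,\widehat{\mu_s}$; since $\mu_{t+s}$ and $\mu_t*\mu_s$ then share the same Laplace transform, the uniqueness clause in Widder's theorem forces $\mu_{t+s}=\mu_t*\mu_s$. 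Finally, vague continuity at $0$ follows because $\widehat{\mu_t}(z)=e^{-t\psi(z)}\to 1=\widehat{\delta_0}(z)$ pointwise as $t\to0+$, and the continuity theorem for Laplace transforms upgrades pointwise convergence of transforms of measures with uniformly bounded mass to vague convergence of the measures.

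For the converse, suppose $(\mu_t)_{t\ge0}$ is such a semigroup with $\widehat{\mu_t}=e^{-t\psi}$. For each fixed $t>0$ the function $e^{-t\psi}$ is a Laplace transform of a positive measure, hence completely monotone in $z$; moreover $e^{-t\psi(z)}=\widehat{\mu_t}(z)\le\mu_t([0,\infty))\le 1$ gives $\psi\ge 0$. By the characterizing equivalence above, $\psi\in\mathcal{BF}$. Alternatively, and more hands-on, each $g_t:=(1-\widehat{\mu_t})/t$ is itself Bernstein, since $g_t'(z)=t^{-1}\int_0^\infty s\,e^{-zs}\,\mu_t(ds)$ is completely monotone and $g_t\ge 0$; as $g_t\to\psi$ pointwise when $t\to0+$ and $\mathcal{BF}$ is closed under pointwise limits, $\psi\in\mathcal{BF}$.

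The main obstacle is the completely-monotone composition step, i.e.\ proving that $e^{-t\psi}$ is completely monotone for $\psi\in\mathcal{BF}$ without simply quoting it; the transparent route is the compound-Poisson expansion. Writing the L\'evy--Hintchine representation $\psi=a+b\lambda+\int_{0+}^\infty(1-e^{-\lambda s})\,\nu(ds)$ and factoring $e^{-t\psi}=e^{-ta}\,e^{-tb\lambda}\,\exp(-t\int_{0+}^\infty(1-e^{-\lambda s})\nu(ds))$, the first two factors are visibly completely monotone, while for finite $\nu$ the third expands as $e^{-t\|\nu\|}\sum_{n\ge0}(t^n/n!)\,\widehat{\nu^{*n}}$, exhibiting it as the Laplace transform of the positive measure $e^{-t\|\nu\|}\sum_{n\ge0}(t^n/n!)\,\nu^{*n}$; the general case follows by truncating $\nu$ and passing to the limit. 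This also yields an explicit formula for $\mu_t$. The remaining delicate point is the vague-continuity argument, where one must control the total masses uniformly and invoke the Laplace-transform continuity theorem rather than argue merely pointwise.
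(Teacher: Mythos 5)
The paper does not prove this statement at all: it records it as a classical result of Bochner and cites \cite[Theorem 5.2]{SchilSonVon2010}, so there is no in-paper argument to compare yours against. Your proof is correct and is essentially the standard one from that reference: complete monotonicity of $e^{-t\psi}$ (via the compound-Poisson expansion of the L\'evy--Hintchine representation, or via the composition theorem) plus Hausdorff--Bernstein--Widder gives the measures $\mu_t$; multiplicativity of $z\mapsto e^{-t\psi(z)}$ in $t$ plus uniqueness of Laplace transforms gives the convolution law; and the continuity theorem for Laplace transforms of uniformly bounded measures gives vague continuity at $0$. For the converse, your ``hands-on'' route via $g_t=(1-\widehat{\mu_t})/t\in\mathcal{BF}$ and closedness of $\mathcal{BF}$ under pointwise limits is exactly the right argument; note that your first paragraph's assertion that complete monotonicity of all $e^{-t\psi}$ \emph{characterizes} $\mathcal{BF}$ is itself the nontrivial half of the converse and should not be quoted as an ``engine'' --- but since the $g_t$ argument supersedes it, nothing is actually missing. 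The only points worth making explicit in a write-up are that $\mu_t$ is finite (its mass is $\lim_{z\to0+}e^{-t\psi(z)}=e^{-t\psi(0+)}\le 1$ by monotone convergence) and that the truncations $\nu|_{(\epsilon,\infty)}$ are finite measures by the integrability condition on $\nu$, so the limiting step for the compound-Poisson construction is legitimate.
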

Note that $(\mu_t)_{t \ge 0}$ above is defined uniquely.

\section{Preliminaries on functional calculi}\label{Fun}

The following discussion of functional calculi may seem to be rather long.
However, our arguments depend on all of those calculi essentially,
and we do not see any other way to make the arguments transparent
 than to introduce the calculi and to explore the relations
between them.

\subsection{Abstract functional calculus and its extensions}\label{abstrfun}
We start from very abstract considerations. However, such an
approach will allow to present several functional calculi below in
a unified manner. For its more comprehensive exposition we refer
to \cite[Section 1]{Haa2006}.

Let $M$ be a commutative algebra with unit $1,$  $N \subset M$  be
its subalgebra, and $X$ be a complex Banach space. Let $\Phi: N
\mapsto \mathcal L(X)$ be a homomorphism.  Then the triple $(M, N,
\Phi)$ is called a (primary) functional calculus over $X.$ If the
set ${\rm Reg}(N) := \{e \in N: \Phi(e) \, \text{is injective}\}$
is not empty, then each member of ${\rm Reg}(N)$ is called a
regulariser. If  $f \in  M$ and there is $e \in {\rm Reg}(N)$ such
that also $ef \in N,$ then $f$ is called regularisable and $e$ a
regulariser for $f.$ If ${\rm Reg}(1)\not=\emptyset$, then the functional calculus is called {\em proper}.
Clearly, if the functional calculus is proper then $\mathcal N:= \{f \in  M : f
\,\text{is regularisable}\}$
 is a subalgebra of $M$ containing $N,$
and for $f \in \mathcal N$  we then define
\begin{align}\label{abstrdef}
{\rm dom}\, (\Phi_e(f)):=&\{x \in X :
(ef)(A)x \in {\rm ran}\,\Phi(e) \}\\
\Phi_e(f) :=& \Phi(e)^{-1} \Phi(ef) \notag
\end{align}
where $e  \in {\rm Reg}(N)$ is a regulariser for $f.$ Then
$\Phi_e(f)$ is a well-defined closed linear operator on $X,$ and the
definition \eqref{abstrdef} is independent of the  regulariser
$e.$ The mapping $\Phi_e: \mathcal N \ni f \mapsto \Phi(f)$
defined in \eqref{abstrdef} extends $\Phi,$ and one usually writes
$f$ instead of $\Phi_e(f).$ The triple $(M, \mathcal N, \Phi_e)$
is called the extended functional calculus (meaning the extension
of $\Phi$ from $N$ to $\mathcal N.$) We will use the same
terminology if $\mathcal N$ is replaced by any of its subalgebras
containing $N.$

The extended functional calculus has a number of natural
(expected) properties of functional calculi, and we stress here
two of them which will be used regularly in the sequel.

\begin{prop}\label{funcal}
 Let $(M, \mathcal N, \Phi_e)$ be an extended  functional calculus over a
Banach space $X.$ Then the following assertions hold.
\begin{itemize}
\item [(i)] If $B \in \mathcal L(X)$ commutes with each $\Phi_e (e), e \in  {\rm Reg}(N),$ the it commutes with each
 $\Phi_e(f), f \in \mathcal N.$
\item [(ii)]Sum rule: given $f, g \in \mathcal N$ one has
$\Phi_e(f) + \Phi_e(g) \subset \Phi_e(f + g),$ with the equality
if $\Phi_e(g) \in \mathcal L (X);$
\item [(iii)] Product rule: given $f,g \in
\mathcal N$ one has $\Phi_e (f)\Phi_e (g)\subset \Phi_e (fg),$
with the equality if $\Phi_e (g) \in \mathcal L (X).$
\end{itemize}
\end{prop}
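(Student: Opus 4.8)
The plan is to reduce all three assertions to domain bookkeeping built on two structural facts about the primary calculus $(M,N,\Phi)$: that $\mathrm{Reg}(N)$ is closed under multiplication (a product of injective operators is injective), and that $\Phi_e(h)$ does not depend on the chosen regulariser (granted in the text). The first fact produces \emph{common} regularisers: if $f,g\in\mathcal N$ carry regularisers $e_1,e_2$, then $e:=e_1e_2\in\mathrm{Reg}(N)$ regularises $f$, $g$, $f+g$ and $fg$ simultaneously, since $ef=e_2(e_1f)\in N$, $eg=e_1(e_2g)\in N$, $e(f+g)=ef+eg\in N$, and $e(fg)=(e_1f)(e_2g)\in N$. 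Throughout I would use the defining relation in the form: $x\in\dom(\Phi_e(h))$ iff $\Phi(eh)x\in\ran\Phi(e)$, and then $\Phi(eh)x=\Phi(e)\Phi_e(h)x$.

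For (i), fix $f\in\mathcal N$ with regulariser $e$ and $x\in\dom(\Phi_e(f))$, so that $\Phi(ef)x=\Phi(e)\Phi_e(f)x$. Applying $B$ and moving it through $\Phi(ef)$ and $\Phi(e)$ gives $\Phi(ef)(Bx)=B\Phi(ef)x=B\Phi(e)\Phi_e(f)x=\Phi(e)\bigl(B\Phi_e(f)x\bigr)\in\ran\Phi(e)$, whence $Bx\in\dom(\Phi_e(f))$ and $\Phi_e(f)Bx=B\Phi_e(f)x$, i.e. $B\,\Phi_e(f)\subset\Phi_e(f)\,B$. Thus the claim hinges on $B$ commuting with $\Phi(e)$ and with $\Phi(ef)$, both elements of $\Phi(N)$. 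Since $ef\in N$ need not be injective, the substance of (i) is to upgrade the hypothesis — commutation with the \emph{injective} images $\Phi(e)$, $e\in\mathrm{Reg}(N)$ — to commutation with $\Phi(u)$ for \emph{every} $u\in N$; I would do this by representing $u$ through regularisers and using linearity of $\Phi$, the convenient bookkeeping being the identity $\Phi(e)\bigl(B\Phi(u)-\Phi(u)B\bigr)=B\Phi(eu)-\Phi(eu)B$ together with injectivity of $\Phi(e)$.

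For (ii), take a common regulariser $e$ for $f,g$ and $x\in\dom\Phi_e(f)\cap\dom\Phi_e(g)$. Adding $\Phi(ef)x=\Phi(e)\Phi_e(f)x$ and $\Phi(eg)x=\Phi(e)\Phi_e(g)x$ yields $\Phi(e(f+g))x=\Phi(e)\bigl(\Phi_e(f)x+\Phi_e(g)x\bigr)$, i.e. $\Phi_e(f)+\Phi_e(g)\subset\Phi_e(f+g)$. If $\Phi_e(g)\in\mathcal L(X)$, then $\Phi(eg)x=\Phi(e)\Phi_e(g)x$ holds for every $x$, so from $x\in\dom\Phi_e(f+g)$ one reads off $\Phi(ef)x=\Phi(e(f+g))x-\Phi(e)\Phi_e(g)x\in\ran\Phi(e)$, forcing $x\in\dom\Phi_e(f)$ and hence equality of domains. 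The product rule (iii) is the same idea, now invoking the homomorphism property and injectivity: with the common regulariser $e$ one has $\Phi(e)\Phi(efg)=\Phi(e^2fg)=\Phi(ef)\Phi(eg)$, so for $x\in\dom(\Phi_e(f)\Phi_e(g))$, writing $y=\Phi_e(g)x$ and $z=\Phi_e(f)y$,
\[
\Phi(e)\,\Phi(efg)x=\Phi(ef)\,\Phi(eg)x=\Phi(ef)\,\Phi(e)y=\Phi(e)\,\Phi(ef)y=\Phi(e^2)z,
\]
and cancelling one factor $\Phi(e)$ by injectivity gives $\Phi(efg)x=\Phi(e)z\in\ran\Phi(e)$, i.e. $x\in\dom\Phi_e(fg)$ and $\Phi_e(fg)x=z=\Phi_e(f)\Phi_e(g)x$. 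When $\Phi_e(g)\in\mathcal L(X)$, I would run the chain from $x\in\dom\Phi_e(fg)$ (so $\Phi(efg)x\in\ran\Phi(e)$, and $y=\Phi_e(g)x$ is defined for all $x$): then $\Phi(e)\Phi(ef)y=\Phi(ef)\Phi(eg)x=\Phi(e)\Phi(efg)x$, and injectivity of $\Phi(e)$ gives $\Phi(ef)y=\Phi(efg)x\in\ran\Phi(e)$, i.e. $y\in\dom\Phi_e(f)$ and $x\in\dom(\Phi_e(f)\Phi_e(g))$, so equality holds.

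The additions and substitutions are routine; the two places needing genuine care are the promotion of the commutation hypothesis from injective images to all of $\Phi(N)$ in (i), and the reliance on injectivity of $\Phi(e)$ to recover the reverse domain inclusions in (iii) and in the equality cases. This is exactly where boundedness of $\Phi_e(g)$ is indispensable: it makes $\Phi(eg)x=\Phi(e)\Phi_e(g)x$ available for every $x\in X$ rather than only on $\dom\Phi_e(g)$, so that the domain condition defining $\Phi_e(f+g)$ (resp.\ $\Phi_e(fg)$) can be transported back to $f$.
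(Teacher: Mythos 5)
The paper offers no proof of this proposition: it is stated as a known fact with a pointer to \cite[Section 1]{Haa2006}, so there is nothing to compare your argument against line by line. On its own merits, your treatment of (ii) and (iii) is correct and complete: the common-regulariser construction $e=e_1e_2$ works as you say (a product of injective operators is injective and $N$ is a subalgebra), the forward inclusions follow from the identity $\Phi(eh)x=\Phi(e)\Phi_e(h)x$ together with linearity, commutativity of $M$ and injectivity of $\Phi(e)$, and your use of the boundedness of $\Phi_e(g)$ to transport the domain condition of $\Phi_e(f+g)$ (resp.\ $\Phi_e(fg)$) back to $f$ in the equality cases is exactly the right mechanism.

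Part (i), however, contains a genuine gap at the very point you flag as needing care. Your identity $\Phi(e)\bigl(B\Phi(u)-\Phi(u)B\bigr)=B\Phi(eu)-\Phi(eu)B$ is correct, but it merely trades commutation with $\Phi(u)$ for commutation with $\Phi(eu)$, and $eu$ is again a general, typically non-injective, element of $N$, while the hypothesis only gives commutation with $\Phi(e)$ for $e\in\mathrm{Reg}(N)$. So ``representing $u$ through regularisers'' is the entire content of the step, and you do not show that it is possible. It is, and the missing argument is a Neumann-series perturbation: for $u\in N$, $e\in\mathrm{Reg}(N)$ and $0<|\lambda|<\|\Phi(u)\|^{-1}$ one has $e+\lambda eu\in N$ and $\Phi(e+\lambda eu)=\Phi(e)\bigl(I+\lambda\Phi(u)\bigr)$, which is injective because $I+\lambda\Phi(u)$ is invertible; hence $e+\lambda eu\in\mathrm{Reg}(N)$, so $B$ commutes with $\Phi(eu)=\lambda^{-1}\bigl(\Phi(e+\lambda eu)-\Phi(e)\bigr)$, and your identity together with injectivity of $\Phi(e)$ then yields $B\Phi(u)=\Phi(u)B$ for every $u\in N$. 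Applying this with $u=ef$ in your first paragraph closes (i). Without some such step the argument for (i) is circular, since commutation with the injective images alone does not obviously control the non-injective elements of $\Phi(N)$.
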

From Proposition \ref{funcal}, (iii) it follows that if $f$ is
regularizable and $e$ is a regularizer, then
\begin{equation}\label{domian}
{\rm ran}\,(\Phi_e(e))\subset {\rm dom}\,(\Phi_e(f)).
\end{equation}

\subsection{Sectorial operators and holomorphic functional
calculus}\label{funsec}

There are several ways to define a function of a sectorial
operator. Probably the most well-known approach to that task is
provided by the holomorphic functional calculus. This calculus
will be relevant for us, and we will set it up below omitting some
crucial details and referring to \cite[Sections 1-2]{Haa2006} for
more information.

\begin{defn}
A closed linear operator $A$ on $X$ is called sectorial of angle
$\omega\in [0,\pi)$ (in short: $A\in \mbox{Sect}(\omega)$) if
$\sigma(A)\subset \overline{\Sigma}_\omega$ and
\[
M(A,\omega_0):= \sup\{\|\lambda (\lambda-A)^{-1}\|: \lambda\not\in
\Sigma_{\omega_0}\}<\infty
\]
for all $\omega<\omega_0<\pi$.
The number
\[
\omega_A:=\min\,\{\omega:\,A\in\mbox{Sect}(\omega)\}
\]
is called the sectoriality angle of $A$.
\end{defn}
\begin{remark}
Note that we do not require $A$ to be densely defined.
\end{remark}
The set of sectorial operators on $X$ will be denoted by $\Sect$.

It is well-known that a linear operator $A$ on  $X$ is sectorial
if and only if $(-\infty,0) \subset \rho(A)$ and
\begin{equation}
 M(A):=\sup_{s>0}\,s\|(s+A)^{-1}\|<\infty, \label{stiloper}
\end{equation}
see e.g. \cite[Prop. 1.2.1]{Mart1}.
The notation $M(A)$ introduced above will be important in the sequel.

The holomorphic functional calculus for sectorial operators is
described in details e.g. in \cite{Haa2006} and in \cite{Weis},
and we give only a very short account of it here.

For $\omega \in (0,\pi]$, let
\[
H_0^\infty(\Sigma_\omega):= \left\{f\in \mathcal{O}(\Sigma_\omega)
: \text{$|f(\lambda)|\leq C\min(|\lambda|^s, |\lambda|^{-s})$ for
some $C,s>0$}\right\},
\]
where $\mathcal{O}(S_\omega)$ denotes the space of all holomorphic
functions on the sector $\Sigma_\omega$. Let
\begin{equation}\label{tau}
\tau(\lambda):=\frac{\lambda}{(1+\lambda)^2}
\end{equation}
and
\begin{eqnarray*}
\mathcal{B}(\Sigma_\omega) = \left\{f\in
\mathcal{O}(\Sigma_\omega) : \text{$|f(\lambda)|\leq C\max
\left(|\lambda|^s, |\lambda|^{-s} \right)$ for some
$C,s>0$}\right\}.
\end{eqnarray*}

Let  $A\in \operatorname{Sect}(\theta)$ for some angle $\theta$,
and let $\theta<\omega<\pi$.   Following the abstract scheme
described in subsection \ref{abstrfun}, define
\[
 N=H_0^\infty(\Sigma_\omega),\quad
 M =\mathcal{O}(\Sigma_\omega).
\]
For $f \in H_0^\infty(\Sigma_\omega)$, we define
\begin{equation}\label{Cauchy}
\Phi(f) = f(A):=\frac{1}{2\pi i}\int_\Gamma f(\lambda)
(\lambda-A)^{-1}\,d\lambda,
\end{equation}
where $\Gamma$ is the downward oriented boundary of a sector
$\Sigma_{\omega_0}$, with $\theta<\omega_0<\omega$.  This
definition is independent of $\omega_0$, and
\[
\Phi: H_0^\infty(\Sigma_\omega)\mapsto \mathcal{L}(X),\quad
\Phi(f)=f(A),
\]
is an algebra homomorphism (such that $\Phi(\tau)=A(1+A)^{-2}$).
The mapping $\Phi$ defines the primary holomorphic functional
calculus $(O(\Sigma_\omega), H_0(\Sigma_\omega), \Phi).$

Now let us assume in addition that  $A$ is {\it injective}, so
that $\Phi(\tau)=\tau(A)$ is injective too. Then we can define the
corresponding extended functional calculus
$(O(\Sigma_\omega),\mathcal{B}(\Sigma_\omega), \Phi_e)$ for $A$.
This calculus is called {\em the extended holomorphic calculus}
for $A$.  Any function $f \in \mathcal{B}(\Sigma_\omega)$ has a
regulariser $e$ of the form $e=\tau^n$, thus
\begin{equation}\label{defcal}
f(A)=[\tau^n(A)]^{-1}(\tau^n f)(A),
\end{equation}
where $n\in\mathbb N$ is so large  that
\[
\tau^n f\in H_0^\infty (\Sigma_\omega).
\]

This functional calculus formally depends on a choice of $\omega$,
but the calculi are consistent for different $\omega$'s if we
identify a function $f$ on $\Sigma_\omega$ with its restriction to
$\Sigma_\gamma$ for $\theta < \gamma < \omega$. We may therefore
make this identification and consider our holomorphic calculus to
be defined on the algebra
\[
\mathcal{B}[\Sigma_\omega]:=\bigcup_{\omega<\gamma<\pi}\,\mathcal{B}(\Sigma_\gamma).
\]

Note that  Proposition \ref{funcal} is clearly holds for the
triple $(\mathcal{B}[\Sigma_\omega], O(\Sigma_\omega), \Phi_e)$
and we have in particular the sum rule and the product rule. These
rules will often be used without a specific reference.

The assumption of injectivity of $A$ is often rather restrictive. To avoid it,
we may consider another subalgebra of  $O(\Sigma_\omega)$ defined as
\begin{eqnarray*}
\mathcal{B}_0(\Sigma_\omega):= \left\{f\in
\mathcal{O}(\Sigma_\omega) : \text{$|f(\lambda)|\leq C
|\lambda|^s$ for some
$C,s>0$}\right\}.
\end{eqnarray*}
Every  function $f \in \mathcal{B}_0(\Sigma_\omega)$ has a
regulariser $e$ of the form $e=\tau_0^n$ where $\tau_0(z)=1/(1+z)$ and $n\in \mathbb N$ is large enough. Thus using \eqref{defcal} as above,  we can define the
 extended functional calculus
$(O(\Sigma_\omega),{\mathcal B}_0[\Sigma_\omega], \Phi_e)$ for arbitrary sectorial $A$.
In this way, all fractional powers $A^q, q >0,$ are well-defined, and this definition will be basic for us in dealing
with fractional powers of sectorial operators.

We will frequently use the following composition rule for the
holomorphic functional calculus, see e.g. \cite[Theorem 3.1.4]{Haa2006}.

\begin{prop}\label{frpow3}
Let $\alpha\in (0,1)$ and
\[
0\le \omega<\beta\le \pi, \qquad \beta'=\alpha\beta< \pi.
\]
Suppose that
\[
f\in \mathcal{B}(\Sigma_{\beta'}).
\]
Let $A\in \Sect(\omega)$ and let $A$ be injective. Then
\[
f(A^\alpha)=(f\circ \lambda^\alpha)(A).
\]
\end{prop}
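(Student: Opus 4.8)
The plan is to verify first that both operators are well defined, then to establish the identity for $f\in H_0^\infty(\Sigma_{\beta'})$ by a contour computation, and finally to pass to arbitrary $f\in\mathcal{B}(\Sigma_{\beta'})$ by regularisation. Since $A\in\Sect(\omega)$ is injective, the fractional power $A^\alpha$ is injective and belongs to $\Sect(\alpha\omega)$; as $\alpha\omega<\alpha\beta=\beta'$, the extended holomorphic calculus of $A^\alpha$ is defined on $\mathcal{B}(\Sigma_{\beta'})$, so $f(A^\alpha)$ is meaningful. On the other hand $\lambda\mapsto\lambda^\alpha$ maps $\Sigma_\beta$ biholomorphically onto $\Sigma_{\beta'}$ and satisfies $|\lambda^\alpha|=|\lambda|^\alpha$, whence $f\mapsto f\circ\lambda^\alpha$ is an algebra homomorphism taking $\mathcal{B}(\Sigma_{\beta'})$ into $\mathcal{B}(\Sigma_\beta)$ and $H_0^\infty(\Sigma_{\beta'})$ into $H_0^\infty(\Sigma_\beta)$. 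Thus $g:=f\circ\lambda^\alpha\in\mathcal{B}(\Sigma_\beta)$ and $g(A)$ is defined by the extended calculus of $A$, and it remains to prove $f(A^\alpha)=g(A)$.

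The conceptual bridge between the two calculi is the resolvent identity
\[
(z-A^\alpha)^{-1}=\big((z-\lambda^\alpha)^{-1}\big)(A),\qquad z\in\rho(A^\alpha),
\]
where on the right $(z-\lambda^\alpha)^{-1}$ is read in the calculus of $A$ on a sector $\Sigma_{\omega_1/\alpha}$ that contains the sectoriality sector of $A$ and on which $(z-\lambda^\alpha)^{-1}$ is bounded and holomorphic; this identity follows from the sum and product rules of Proposition \ref{funcal} applied to $z-\lambda^\alpha$ and its reciprocal, together with $z\in\rho(A^\alpha)$. To exploit it, fix $\omega<\omega_0$ and $\alpha\omega<\omega_1<\beta'$ with $\alpha\omega_0<\omega_1$, put $\Gamma=\partial\Sigma_{\omega_0}$ and $\Gamma'=\partial\Sigma_{\omega_1}$ (suitably oriented), and take $f\in H_0^\infty(\Sigma_{\beta'})$. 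Writing $g(A)$ through the regulariser $\tau^n$ as $g(A)=[\tau^n(A)]^{-1}\frac{1}{2\pi i}\int_\Gamma\tau^n(\lambda)f(\lambda^\alpha)(\lambda-A)^{-1}\,d\lambda$ and inserting the Cauchy formula $f(\lambda^\alpha)=\frac{1}{2\pi i}\int_{\Gamma'}f(z)(z-\lambda^\alpha)^{-1}\,dz$ (legitimate because $\alpha\omega_0<\omega_1$ places $\lambda^\alpha$ inside $\Gamma'$), one obtains a double integral. The factor $\tau^n(\lambda)$ contributes decay $|\lambda|^n$ near $0$ and $|\lambda|^{-n}$ near $\infty$, which with $\|(\lambda-A)^{-1}\|=O(1/|\lambda|)$, the decay of $f$ along $\Gamma'$, and the boundedness of $(z-\lambda^\alpha)^{-1}$ on $\Gamma\times\Gamma'$ makes it absolutely convergent; Fubini then applies. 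After interchanging, the inner $\lambda$-integral is $(\tau^n h_z)(A)$ with $h_z:=(z-\lambda^\alpha)^{-1}$; pulling $[\tau^n(A)]^{-1}$ back inside yields $g(A)=\frac{1}{2\pi i}\int_{\Gamma'}f(z)\,h_z(A)\,dz$, and the resolvent identity converts this into $\frac{1}{2\pi i}\int_{\Gamma'}f(z)(z-A^\alpha)^{-1}\,dz=f(A^\alpha)$.

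For arbitrary $f\in\mathcal{B}(\Sigma_{\beta'})$ choose $e=\tau^n$ with $ef\in H_0^\infty(\Sigma_{\beta'})$. The previous step applied to $e$ and to $ef$ gives $e(A^\alpha)=(e\circ\lambda^\alpha)(A)$ and $(ef)(A^\alpha)=\big((e\circ\lambda^\alpha)(f\circ\lambda^\alpha)\big)(A)$; moreover $(e\circ\lambda^\alpha)(A)=e(A^\alpha)=[A^\alpha(1+A^\alpha)^{-2}]^{n}$ is injective, so $e\circ\lambda^\alpha$ is a regulariser for $g=f\circ\lambda^\alpha$. Since $f\mapsto f\circ\lambda^\alpha$ is multiplicative, the defining formulas $f(A^\alpha)=[e(A^\alpha)]^{-1}(ef)(A^\alpha)$ and $g(A)=[(e\circ\lambda^\alpha)(A)]^{-1}\big((e\circ\lambda^\alpha)(f\circ\lambda^\alpha)\big)(A)$ now coincide term by term, giving $f(A^\alpha)=g(A)$. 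I expect the delicate point to be the $H_0^\infty$ step: justifying the interchange of the two contour integrals is nontrivial because the resolvent of $A$ is controlled only by $O(1/|\lambda|)$ near $0$ and $\infty$, which is exactly why the regulariser $\tau^n$ must be carried through the computation, and the resolvent identity has to be set on the correctly chosen sector $\Sigma_{\omega_1/\alpha}$ so that $(z-\lambda^\alpha)^{-1}$ is genuinely holomorphic there.
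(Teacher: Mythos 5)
Your proof is correct. The paper itself gives no argument for this proposition but simply cites \cite[Theorem 3.1.4]{Haa2006}, and your route --- well-definedness of both sides, the resolvent compatibility $(z-A^{\alpha})^{-1}=\bigl((z-\lambda^{\alpha})^{-1}\bigr)(A)$, the Cauchy--Fubini computation for $H_0^\infty$ functions with the regulariser $\tau^n$ carried through, and then regularisation for general $f\in\mathcal{B}(\Sigma_{\beta'})$ --- is exactly the standard proof behind that citation; the only small imprecision is that $(z-\lambda^{\alpha})^{-1}$ is not in fact bounded on $\Gamma\times\Gamma'$ near the origin (one only has $|z-\lambda^{\alpha}|\geq c\,(|z|+|\lambda|^{\alpha})$), but the decay $|f(z)|\leq C|z|^{s}$ there still yields the absolute convergence needed for Fubini.
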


Finally, we will also need the property of sectoriality of
fractional powers of sectorial operators, see e.g. \cite[Theorem 3.1.2]{Haa2006} for
its proof.
\begin{prop}\label{frpow}
Suppose that $A\in \Sect(\omega), q >0,$ and
$q\omega<\pi$. Then $A^q\in \Sect(q\omega)$.
\end{prop}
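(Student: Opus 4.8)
The plan is to verify directly the two defining properties of $\Sect(q\omega)$ encoded in \eqref{stiloper}: that $\sigma(A^q)\subset\overline{\Sigma}_{q\omega}$ and that $\sup\{\|w(w-A^q)^{-1}\|:\,w\notin\Sigma_\theta\}<\infty$ for every $\theta\in(q\omega,\pi)$. Fix such a $\theta$ and choose an auxiliary angle $\omega'$ with $\omega<\omega'<\theta/q$; this is possible precisely because $q\omega<\theta$. Since $q\omega'<\pi$, the function $\lambda\mapsto\lambda^q$ lies in $\mathcal B_0(\Sigma_{\omega'})$ and maps $\overline{\Sigma}_{\omega'}$ into $\overline{\Sigma}_{q\omega'}$, so $A^q$ is well defined through the extended calculus of Subsection \ref{funsec}. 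For $w\notin\Sigma_\theta$ and $\lambda\in\overline{\Sigma}_{\omega'}$ the arguments of $w$ and of $\lambda^q$ differ by at least $\theta-q\omega'>0$, whence the elementary estimate \eqref{number} underlying Proposition \ref{P1} yields a constant $c=c(\theta,q\omega')>0$ with
\begin{equation*}
|w-\lambda^q|\ge c\,(|w|+|\lambda|^q),\qquad w\notin\Sigma_\theta,\ \lambda\in\overline{\Sigma}_{\omega'}.
\end{equation*}
In particular $w-\lambda^q$ never vanishes on $\overline{\Sigma}_{\omega'}$, so $r_w(\lambda):=(w-\lambda^q)^{-1}$ is holomorphic there.

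Because $r_w\cdot(w-\lambda^q)\equiv 1$, the product rule of Proposition \ref{funcal}(iii) gives $r_w(A)(w-A^q)\subset I$ and $(w-A^q)r_w(A)\subset I$; hence, once $r_w(A)\in\mathcal L(X)$ is established, $w\in\rho(A^q)$ and $(w-A^q)^{-1}=r_w(A)$. To bound $r_w(A)$ I would first exploit the scaling $A\mapsto|w|^{-1/q}A$, which leaves $M(A)$ unchanged and replaces $A^q$ by $|w|^{-1}A^q$; thus the norm of $w(w-A^q)^{-1}$ depends on $w$ only through $\arg w$, and we may assume $|w|=1$. I then split off the singular part by writing $r_w=w^{-1}\tau_0+g_w$ with $\tau_0(\lambda)=(1+\lambda)^{-1}$ and
\begin{equation*}
g_w(\lambda)=\frac{w\lambda+\lambda^q}{w\,(1+\lambda)\,(w-\lambda^q)}.
\end{equation*}
A short check shows $g_w$ decays like $|\lambda|^{\min(1,q)}$ as $\lambda\to0$ and like $|\lambda|^{-\min(1,q)}$ as $\lambda\to\infty$, so $g_w\in H_0^\infty(\Sigma_{\omega'})$ and $g_w(A)$ is given by the absolutely convergent Cauchy integral \eqref{Cauchy}. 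Estimating that integral by means of the displayed separation bound together with $\|(\lambda-A)^{-1}\|\le M(A)/|\lambda|$ produces a finite constant depending only on $\theta,q,\omega'$ and $M(A)$; since $\|w^{-1}\tau_0(A)\|=\|(1+A)^{-1}\|\le M(A)$ as well, we obtain $\|w(w-A^q)^{-1}\|\le C$ uniformly in $w\notin\Sigma_\theta$.

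Running the same construction for every $w$ with $|\arg w|>q\omega$ (choosing $\theta$ between $q\omega$ and $|\arg w|$) shows $\mathbb C\setminus\overline{\Sigma}_{q\omega}\subset\rho(A^q)$, i.e. $\sigma(A^q)\subset\overline{\Sigma}_{q\omega}$; combined with the uniform resolvent bound this is exactly $A^q\in\Sect(q\omega)$. The main obstacle I anticipate is the simultaneous control of the integrand at the two ends of the contour: for a merely sectorial (possibly non-injective) $A$ the bound $\|(\lambda-A)^{-1}\|\le M(A)/|\lambda|$ blows up as $\lambda\to0$, so the naive integral $\int(w-\lambda^q)^{-1}(\lambda-A)^{-1}\,d\lambda$ over $\partial\Sigma_{\omega'}$ diverges at the origin. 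The decomposition $r_w=w^{-1}\tau_0+g_w$ is designed precisely to remove this obstruction, the cancellation in $g_w$ restoring genuine decay at $0$ while the scaling reduction to $|w|=1$ prevents the compensating logarithmic growth at infinity; verifying that $g_w$ indeed lands in $H_0^\infty(\Sigma_{\omega'})$ with the claimed uniform bound is the technical heart of the argument.
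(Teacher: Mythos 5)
Your proof is correct, and it is essentially the argument the paper relies on: Proposition \ref{frpow} is quoted from \cite[Theorem 3.1.2]{Haa2006}, whose proof --- reproduced quantitatively in the paper's own Proposition \ref{PrBBL10} for $q>1$ --- rests on exactly your decomposition of $(w-\lambda^q)^{-1}$ into a matched resolvent of $A$ plus an $H_0^\infty(\Sigma_{\omega'})$ correction, estimated over $\partial\Sigma_{\omega_0}$ via the angular separation bound and the product rule. The only cosmetic difference is that the paper subtracts the scaled resolvent term $|z|^{1/q}(|z|^{1/q}+\lambda)^{-1}$ directly, whereas you first normalize to $|w|=1$ and subtract $w^{-1}(1+\lambda)^{-1}$; these are equivalent under the homogeneity $(tA)^q=t^qA^q$.
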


\subsection{Hirsch functional calculus}\label{funHi}

The definition of the holomorphic functional calculus for a
sectorial operator by means of regularisers is quite implicit.
On the other hand, it is often useful to have a comparatively simple expression
for a function of a sectorial operator allowing for estimates in  resolvent terms.
To this aim, one may use the Hirsch functional
calculus which provides a definition of complete Bernstein function for a sectorial
operator.

We now define complete Bernstein functions of sectorial operators
following Hirsch and review some of their basic properties needed
in the sequel. Let $A$ be a sectorial operator on $X$. The next
definition was essentially given in \cite[p.\ 255]{HirschInt}, see
also \cite{BaGoTa}.

\begin{defn}\label{defoperbern} Given $f \in \mathcal{CBF}$ with  Stieltjes
representation $(a,b,\sigma)$ (see \eqref{Cbf}), define an
operator $f_0(A): \dom(A) \to X$ by
\begin{equation}\label{defbernop}
f_0(A)x=a x + b A x +\int_{0+}^{\infty}A (\lambda +A)^{-1}x\,
\ud\sigma(\lambda), \qquad x \in \dom(A).
\end{equation}
By \eqref{mmu}, this integral is absolutely convergent and
$f_0(A)(I+A)^{-1}$ is a bounded operator on $X$, extending
$(I+A)^{-1}f_0(A)$.  Hence $f_0(A)$ is closable as an operator on
$X$.  Define
\begin{equation}\label{debernop}
f(A)=\overline {f_0(A)}.
\end{equation}
We call $f(A)$ a {\it complete Bernstein function of $A$.}
\end{defn}

Note that by Definition \ref{defoperbern}, $\dom (A)$ is core for
$f(A)$.

The mapping $f \mapsto f(A)$ defined by \eqref{defbernop} posses a
number of properties of functional calculus, see e.g. \cite[Th\'eor\`eme 1-3]{HirschInt} and
\cite[Th\'eor\`eme 1]{HirschFA} (see also \cite[Th. 2.3]{Mart},
\cite[p. 200-201]{HirschFA} and \cite{Hir74}). Thus we call this mapping the {\it Hirsch functional calculus} and  describe some
of its properties below.  Note that the
product rule holds for the Hirsch functional calculus restrictedly since the set of complete
Bernstein functions is not closed under multiplication.
\begin{thm}\label{frpowH}
Let $A$ be a sectorial operator on  $X,$ and let $f$ and $g$ be
complete Bernstein functions. Then the following statements hold.
\begin{enumerate}
\item [\rm (i)]\label{hiri} The operator $f(A)$ (and $g(A)$) is
sectorial and
\begin{equation}\label{frpow1}
\sup_{s>0}\,\|s(f(A)+s)^{-1}\|\le \sup_{s>0}\,\|s(A+s)^{-1}\|.
\end{equation}
\item [\rm (ii)] \label{hirii} The composition rule holds: $f (g (A))= (f\circ g)(A).$
In particular, if $f_\alpha(\lambda)=f(\lambda^\alpha)$ for some
$\alpha\in (0,1)$, then
\begin{equation}\label{AB1}
f(A^\alpha)=f_\alpha(A),
\end{equation}
and if $g_\beta(\lambda):=[g(\lambda)]^\beta\in \mathcal{CBF}$ for
some $\beta>1$, then
\begin{equation}\label{AB2}
[g_\beta(A)]^{1/\beta}=g(A).
\end{equation}
\item [\rm(iii)] \label{hiriii} If the product $f g$ is also a complete Bernstein function, then $f(A)g(A)= (f g)(A).$
\end{enumerate}
\end{thm}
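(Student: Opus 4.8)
The plan is to treat the three assertions in turn, each time reducing the operator statement to a scalar identity for complete Bernstein functions and lifting it through the integral definition \eqref{defbernop}. Throughout I write $M:=M(A)=\sup_{s>0}s\|(s+A)^{-1}\|$, and I use that $M\ge 1$ (since $s(s+A)^{-1}\to I$ strongly as $s\to\infty$), together with two standard facts from the theory of complete Bernstein functions, see \cite{SchilSonVon2010}: that $\mathcal{CBF}$ is closed under composition, and that the reciprocal of a nonzero complete Bernstein function is a Stieltjes function. For (i), fix $s>0$. Since $s+f\in\mathcal{CBF}$, its reciprocal is Stieltjes, so
\[
\frac{1}{s+f(\lambda)}=\beta_s+\int_{0+}^\infty\frac{\tau_s(dt)}{\lambda+t},\qquad \lambda>0,
\]
with $\beta_s\ge 0$ and a positive measure $\tau_s$; the term $c/\lambda$ is absent because $f(0+)=a<\infty$, and $\beta_s=1/(s+f(\infty))$. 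Letting $\lambda\to 0+$ (monotone convergence) gives $\int_{0+}^\infty t^{-1}\tau_s(dt)=\tfrac{1}{s+a}-\beta_s<\infty$, so that
\[
R_s:=\beta_s I+\int_{0+}^\infty (t+A)^{-1}\,\tau_s(dt)
\]
is a bounded operator. A computation on the core $\dom(A)$ for $f(A)$, using Definition \ref{defoperbern}, the resolvent identity, and Fubini's theorem to interchange the integrals over the Stieltjes measure of $f$ and over $\tau_s$, shows $R_s=(s+f(A))^{-1}$; in particular $(-\infty,0)\subset\rho(f(A))$, and with \eqref{stiloper} this yields sectoriality of $f(A)$. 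Finally, using $\|(t+A)^{-1}\|\le M/t$,
\[
\|sR_s\|\le s\beta_s+sM\Bigl(\tfrac{1}{s+a}-\beta_s\Bigr)=\frac{sM}{s+a}-s\beta_s(M-1)\le\frac{sM}{s+a}\le M,
\]
where the last two steps use $\beta_s\ge 0$ and $M\ge 1$. Taking the supremum over $s>0$ gives \eqref{frpow1}.

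For (ii), both $f\circ g\in\mathcal{CBF}$ and $g(A)\in\Sect$ (by (i)) are meaningful, so I would prove $f(g(A))=(f\circ g)(A)$ by comparing resolvents. Applying the representation of (i) to the pair $(f,g(A))$ writes $(s+f(g(A)))^{-1}$ as $\beta_s^f I+\int(t+g(A))^{-1}\tau_s^f(dt)$, and substituting the representation of $(t+g(A))^{-1}$ (again from (i), now for $(g,A)$) expresses everything through the resolvents $(u+A)^{-1}$. Performing the same substitution on the scalar side, i.e. computing the Stieltjes representation of $1/(s+f(g(\lambda)))$, produces the \emph{same} kernel against $(u+A)^{-1}$ by uniqueness of Stieltjes data and Fubini; hence the two resolvents coincide for every $s>0$, and therefore $f(g(A))=(f\circ g)(A)$. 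The special cases follow at once: taking $g(\lambda)=\lambda^\alpha\in\mathcal{CBF}$, whose Hirsch calculus agrees with the standard fractional power $A^\alpha$ of Subsection \ref{funsec}, gives $f(A^\alpha)=(f\circ\lambda^\alpha)(A)=f_\alpha(A)$, which is \eqref{AB1}; and taking the outer function $\lambda^{1/\beta}\in\mathcal{CBF}$ with inner function $g_\beta=g^\beta$ gives $[g_\beta(A)]^{1/\beta}=(\lambda^{1/\beta}\circ g_\beta)(A)=g(A)$, which is \eqref{AB2}.

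For (iii), the hypothesis $fg\in\mathcal{CBF}$ makes $(fg)(A)$ well defined, and the equality $f(A)g(A)=(fg)(A)$ is a genuine multiplicativity statement that I would establish by passing to the extended holomorphic calculus. If $A$ is injective, each of $f$, $g$, $fg$ lies in $\mathcal{B}[\Sigma_\omega]$ after regularisation, and the Hirsch calculus is consistent with the calculus $\Phi_e$ of Subsection \ref{funsec}; the product rule of Proposition \ref{funcal}(iii), together with the realisation of $f(A)$ and $g(A)$ by $\Phi_e$, then yields $f(A)g(A)=(fg)(A)$ directly from multiplicativity of $\Phi_e$. For general sectorial $A$ I would replace $A$ by the injective $A+\veps$, apply the injective case, and let $\veps\to 0+$, using strong resolvent convergence $f(A+\veps)\to f(A)$ (and likewise for $g$ and $fg$) and closedness to pass to the limit on the common core $\ran((1+A)^{-2})$.

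The main obstacle is the rigorous resolvent identification underlying (i) and (ii): verifying that the candidate $R_s$ actually maps into $\dom(f(A))$ and is a two-sided inverse of $s+f(A)$, and that the double substitution in (ii) may be reorganised by Fubini and matched with the scalar Stieltjes representation of $1/(s+f\circ g)$. These interchange-of-integration arguments, combined with the uniqueness of Stieltjes data, must be carried out with care; by contrast, the consistency of the Hirsch and holomorphic calculi needed in (iii) is the one external ingredient I would borrow rather than reprove.
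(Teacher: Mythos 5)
The paper gives no proof of this theorem at all: it is quoted from the literature (Hirsch's \emph{Th\'eor\`emes} in \cite{HirschInt} and \cite{HirschFA}, and \cite[Th. 2.3]{Mart}), so your attempt is necessarily an independent reconstruction rather than a variant of an argument in the text. For parts (i) and (ii) your route is essentially the classical one and is sound in outline: writing $1/(s+f)$ as a Stieltjes function $\beta_s+\int_{0+}^\infty(\lambda+t)^{-1}\tau_s(dt)$ (the $c/\lambda$ term indeed vanishes because $f(0+)=a<\infty$), lifting this to the candidate resolvent $R_s$, and deducing \eqref{frpow1} from $\|(t+A)^{-1}\|\le M/t$ and $M\ge 1$ is exactly how the bound is obtained in the sources; matching the iterated Stieltjes representations of $(s+f(g(A)))^{-1}$ and $(s+(f\circ g)(A))^{-1}$ via uniqueness of Stieltjes data is likewise a legitimate strategy for the composition rule. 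You rightly flag the verification that $R_s$ is a two-sided inverse of $s+f(A)$ as the technical core; be aware that this step is genuinely delicate when $\dom(A)$ is not dense (which the paper explicitly permits) and $f$ is bounded, since then $\overline{f_0(A)}$ has domain $\overline{\dom(A)}$ and surjectivity of $s+f(A)$ onto $X$ needs a separate argument.

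Part (iii), however, contains a genuine gap. Proposition \ref{funcal}(iii) yields only the inclusion $\Phi_e(f)\Phi_e(g)\subset\Phi_e(fg)$, with equality guaranteed only when $\Phi_e(g)$ is bounded. For unbounded $g(A)$ --- already for $f(\lambda)=g(\lambda)=\lambda^{1/2}$, where the assertion becomes $A^{1/2}A^{1/2}=A$ --- multiplicativity of the extended holomorphic calculus gives $f(A)g(A)\subset (fg)(A)$ and nothing more. The reverse inclusion requires showing that $\dom((fg)(A))\subset\dom(g(A))$ and that $g(A)$ maps this domain into $\dom(f(A))$, and this rests on quantitative estimates special to complete Bernstein functions (moment-type inequalities controlling $\|g(A)x\|$ by $\|x\|$ and $\|(fg)(A)x\|$). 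Upgrading the inclusion to an equality is precisely the nontrivial content of the product formulas in \cite{HirschInt} and \cite{BaGoTa}; it cannot be obtained ``directly from multiplicativity of $\Phi_e$'' as you assert. The concluding approximation $A\mapsto A+\veps$ would also need more than strong resolvent convergence to transport an equality of two unbounded operator products to the limit, so even granting the injective case your reduction is incomplete.
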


If $\alpha \in  (0,1)$, then remark
after Theorem \ref{Shill} show that $z^\alpha$ is a complete
Bernstein function, and we write $A^\alpha$ for the corresponding
complete Bernstein function of $A$. These fractional powers
coincide with the standard fractional powers defined in the framework of
the holomorphic functional calculus in the previous subsection, see e.g. \cite{BaGoTa}.

\subsection{Hille-Phillips functional calculus}\label{HPfun}

In the functional calculi descri\-bed in Section \ref{funsec}
function of a sectorial operator had to be analytic
on the spectrum of the operator. On the other hand, in some cases
and in the present considerations as well it is possible to drop
this assumption to some extent. In particular, if $-A$ generates a
bounded $C_0$-semigroup then $A \in \Sect(\pi/2),$ where $\pi/2$
cannot be replaced by a smaller angle, in general. On the other
hand, if $f \in L^1(\mathbb R_+)$ then its Laplace transform $\hat
f$ is holomorphic only in $\Sigma_{\pi/2},$ where $\pi/2$ cannot,
in general, be replaced by a larger angle.
 Nevertheless, it is still possible to define
$\hat f (A)$  by the Hille-Phillips functional calculus described
below.

Let us recall definition and basic properties of the (extended)
Hille-Phillips functional calculus useful for the sequel.

Let ${\rm M_b}(\mathbb R_+)$ be a Banach algebra of bounded Radon
measures on $\mathbb R_+.$ If
\[
{\rm A}^1_+(\mathbb C_+) := \{ \widehat \mu : \mu \in {\rm
M}_b(\mathbb R_+)\}
\]
then ${\rm A}^1_+(\mathbb C_+)$ is a commutative Banach algebra
with pointwise multiplication and with the norm
\begin{equation}\label{mmm}
\|\rm \hat \mu\|_{{\rm A}^1_+(\mathbb C_+)} := \|\mu\|_{{\rm
M_b}(\mathbb R_+)} = |{\mu}|(\mathbb R_+),
\end{equation}
where $|\mu|(\mathbb R_+)$ stands for the total variation of $\mu$
on $\mathbb R_+.$

Let $-A$ be the generator of a bounded $C_0$-semigroup
$(e^{-tA})_{t\ge 0}$ on $X$. Then the mapping
\[
{\rm A}^1_+(\mathbb C_+) \mapsto {\mathcal L}(X),\quad
 \Phi (\widehat {\mu})x := \int_0^{\infty} e^{-sA}x\, \mu(d{s}), \qquad x \in X,
\]
defines a continuous algebra homomorphism $\Phi$ such that
\begin{equation}\label{hillestimate}
 \| \Phi (\hat {\mu})\| \le \sup_{t \ge 0} \|e^{-tA}\| |\mu|(\mathbb R_+).
\end{equation}
The homomorphism $\Phi$ is called the (primary) {\em
Hille-Phillips} (HP-) functional calculus for $A$, and if
$g=\hat \mu$ then we set
\begin{equation*}
g(A)= \Phi(\widehat {\mu}).
\end{equation*}
Basic properties of the Hille-Phillips functional calculus can be
found in  \cite[Chapter XV]{HilPhi}.

As in the case of holomorphic functional calculus, we can define
the extended $HP$-calculus by the extension procedure described in Subsection \ref{abstrfun}.

 Let $O(\mathbb C_+)$ be an algebra of functions holomorphic in $\mathbb C_+.$
 If $f \in O(\mathbb C_+)$ is holomorphic such that there exists
 $e\in {\rm A}^1_+(\mathbb C_+)$ with $ef \in {\rm A}^1_+(\mathbb C_+)$ and the
operator $e(A)$ is injective, then one defines $f(A)$ as in
\eqref{abstrdef}.
We then call the triple $(O(\mathbb C_+), {\rm A}^1_+(\mathbb C_+),
\Phi_e)$
 the {\em extended Hille--Phillips calculus} for $A$.

Using the extended Hille-Phillips functional calculus, one can
define a Bernstein function $\psi(A)$ of $A$ and obtain a useful,
L\'evy-Hintchine type representation  for $\psi(A).$ Recall  first
that Bernstein functions are regularisable by $e(z)=1/(1+z).$

\begin{prop}\label{PrS} \cite[Lemma 2.5]{GHT}
Let $\psi\in \mathcal{BF}$. Then $\psi(z)/(1+z)\in \Wip(\C_+)$.
\end{prop}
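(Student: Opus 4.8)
The plan is to write out the L\'evy--Hintchine representation of $\psi$ and to exhibit $\psi(z)/(1+z)$ explicitly as the Laplace transform $\widehat{\rho}$ of a single bounded Radon measure $\rho$ on $\mathbb{R}_+$; by the definition of $\Wip(\C_+)$ via \eqref{mmm} this is exactly what has to be shown. So I would write $\psi(z)=a+bz+\int_{0+}^\infty(1-e^{-zs})\,\nu(ds)$ as in \eqref{defBF}, where $a,b\ge 0$ and $\int_{0+}^\infty \frac{s}{1+s}\,\nu(ds)<\infty$, and treat the three summands separately after division by $1+z$.

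The two ``trivial'' summands are handled by inspection. Since $\frac{1}{1+z}=\int_0^\infty e^{-zu}e^{-u}\,du$ is the Laplace transform of $e^{-u}\,du$, the term $\frac{a}{1+z}$ is the Laplace transform of $a\,e^{-u}\,du$. For the linear term I would write $\frac{bz}{1+z}=b-\frac{b}{1+z}$, which is the Laplace transform of $b\,\delta_0-b\,e^{-u}\,du$, the constant $b$ coming from the point mass $b\,\delta_0$. Both of these measures are plainly bounded.

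The heart of the matter is the integral term $\frac{1}{1+z}\int_{0+}^\infty(1-e^{-zs})\,\nu(ds)$. For fixed $s>0$ I would identify the kernel
\[
\frac{1-e^{-zs}}{1+z}=\widehat{g_s}(z),\qquad g_s(u):=e^{-u}-e^{-(u-s)}\car_{[s,\infty)}(u),\qquad z\in\C_+,
\]
using that $e^{-zs}/(1+z)$ is the Laplace transform of the shift $e^{-(u-s)}\car_{[s,\infty)}(u)$. A direct computation, splitting the integral at $u=s$ where $g_s$ changes sign, gives $\|g_s\|_{L^1(\mathbb{R}_+)}=2(1-e^{-s})$. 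Combining the elementary bound $1-e^{-s}\le\min(1,s)\le 2\frac{s}{1+s}$ with the L\'evy integrability condition then yields
\[
\int_{0+}^\infty\|g_s\|_{L^1(\mathbb{R}_+)}\,\nu(ds)\le 4\int_{0+}^\infty\frac{s}{1+s}\,\nu(ds)<\infty,
\]
so that the $L^1(\mathbb{R}_+)$-valued Bochner integral $G:=\int_{0+}^\infty g_s\,\nu(ds)$ converges absolutely and defines a function $G\in L^1(\mathbb{R}_+)$.

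The step I expect to require the most care is the interchange of the two integrations: once absolute convergence is in hand, Fubini's theorem gives $\widehat{G}(z)=\int_{0+}^\infty\widehat{g_s}(z)\,\nu(ds)=\frac{1}{1+z}\int_{0+}^\infty(1-e^{-zs})\,\nu(ds)$ for every $z\in\C_+$. Adding the three pieces, I obtain $\psi(z)/(1+z)=\widehat{\rho}(z)$ with
\[
\rho=b\,\delta_0+\bigl((a-b)\,e^{-u}+G(u)\bigr)\,du,
\]
whose density lies in $L^1(\mathbb{R}_+)$, so $\rho\in\mathrm{M}_b(\mathbb{R}_+)$ and hence $\psi(z)/(1+z)\in\Wip(\C_+)$, as claimed.
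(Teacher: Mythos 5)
Your proof is correct: the decomposition of $\psi(z)/(1+z)$ according to the L\'evy--Hintchine triple $(a,b,\nu)$, the identification of the kernel $(1-e^{-zs})/(1+z)$ as the Laplace transform of $g_s(u)=e^{-u}-e^{-(u-s)}\car_{[s,\infty)}(u)$ with $\|g_s\|_{L^1}=2(1-e^{-s})\le 4s/(1+s)$, and the Fubini argument together produce exactly a measure $b\,\delta_0+\bigl((a-b)e^{-u}+G(u)\bigr)\,du\in\mathrm{M_b}(\mathbb R_+)$ whose Laplace transform is $\psi(z)/(1+z)$. The paper does not reproduce a proof but simply cites \cite[Lemma 2.5]{GHT}, and your argument is essentially the standard one given there, so there is nothing to object to.
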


Proposition \ref{PrS} implies that for $\psi\in \mathcal{BF}$ the
operator $\psi(A)$ can  be defined by the extended $HP$-calculus
as
\begin{equation}\label{part}
\psi(A)=(1+A)[\psi(z) (1+z)^{-1}](A)
\end{equation}
with the natural domain
\[
\dom(\psi(A))=\{x\in X:\,[\psi(z)(1+z)^{-1}](A)x\in \dom(A)\}.
\]

As it was proved in \cite[Corollary 2.6]{GHT}, the formula \eqref{part}
can be written in a more explicit and useful form called the
L\'evy-Hintchine representation. Moreover, Bernstein functions of
semigroup generators possess certain permanence properties.
Namely, for any $\psi\in \mathcal{BF}$ the operator $-\psi(A)$
generates a bounded $C_0$-semigroup on $X$ as well and the latter
semigroup can be represented in terms of $\psi$ and $(e^{-tA})_{t
\ge 0}$ as (\ref{CMonG})  suggests, see e.g. \cite[Proposition 13.1
and Theorem 13.6] {SchilSonVon2010} and \cite{Phil52}.

For later reference, we summarize these results below.
\begin{thm}\label{BochP}
\begin{itemize}
\item [(i)] Let $-A$ generate a bounded $C_0$-semigroup $(e^{-tA})_{t
\ge 0}$ on  $X$, and let $\psi$ be a Bernstein function with the
corresponding Levy-Hintchine representation $(a, b,\mu).$ Then
 $\left.\psi(A)\right|_{{\rm dom}(A)}$ is given by
 \begin{equation}\label{clos}
\psi(A)x = ax + bAx + \int_{0+}^\infty (1 - e^{-sA})x \,
\mu(d{s}),\qquad x\in \dom(A),
\end{equation}
where the integral is understood as a Bochner integral, and ${\dom}(A)$ is  core for $\psi (A).$
\item [(ii)] Moreover, $-\psi(A)$ is the generator of a bounded $C_0$-semigroup $(e^{-t\psi(A)})_{t \ge 0}$
on $X$ given by
\[
e^{-t\psi(A)}:=\int_{0}^{\infty} e^{-sA}\, \mu_t(ds), \qquad t \ge
0,
\]
where $(\mu_t)_{t\ge 0}$ is a vaguely continuous convolution
semigroup   of subprobability measures on $[0,\infty)$
corresponding to $\psi$ by \eqref{vague} (cf. \eqref{CMonG}).
\end{itemize}
\end{thm}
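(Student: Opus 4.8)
The plan is to treat the two parts separately, grounding everything in the extended Hille--Phillips calculus of Subsection \ref{HPfun}. For part (i) I would start from the regularised definition \eqref{part}, $\psi(A)=(1+A)[\psi(z)(1+z)^{-1}](A)$, which is legitimate by Proposition \ref{PrS}, and decompose the symbol according to the L\'evy--Hintchine representation \eqref{defBF}:
\[
\frac{\psi(z)}{1+z}=\frac{a}{1+z}+b\Bigl(1-\frac{1}{1+z}\Bigr)+\int_{0+}^\infty\frac{1-e^{-zs}}{1+z}\,\mu(ds).
\]
Each summand is the Laplace transform of an explicit $\mathrm{L}^1$-kernel; in particular $\frac{1-e^{-zs}}{1+z}=\widehat{h_s}$ with $\|h_s\|_{\mathrm{L}^1}=2(1-e^{-s})\le 4\,\frac{s}{1+s}$, so the integral converges as a Bochner integral in the Banach algebra $\Wip(\C_+)$ because $\int_{0+}^\infty\frac{s}{1+s}\,\mu(ds)<\infty$. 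Applying the bounded homomorphism $\Phi$ termwise yields
\[
[\psi(z)(1+z)^{-1}](A)=a(1+A)^{-1}+b\bigl(I-(1+A)^{-1}\bigr)+\int_{0+}^\infty (1+A)^{-1}(I-e^{-sA})\,\mu(ds).
\]
Multiplying on the left by $1+A$ and restricting to $\dom(A)$, the first two terms collapse to $ax+bAx$, while for the last one I would pull $1+A$ through the Bochner integral by its closedness, which is justified since both $(1+A)^{-1}(I-e^{-sA})x$ and $(I-e^{-sA})x=\int_0^s Ae^{-rA}x\,dr$ are $\mu$-integrable for $x\in\dom(A)$ (the bound $\|(I-e^{-sA})x\|\le C\frac{s}{1+s}(\|x\|+\|Ax\|)$ suffices). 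This gives \eqref{clos}. For the core assertion I would use that the resolvents $n(n+A)^{-1}$ commute with $\psi(A)$ by Proposition \ref{funcal}(i), map $X$ into $\dom(A)$, and converge strongly to $I$; hence $n(n+A)^{-1}y\to y$ and $\psi(A)n(n+A)^{-1}y=n(n+A)^{-1}\psi(A)y\to\psi(A)y$ for every $y\in\dom(\psi(A))$.

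For part (ii), let $(\mu_t)_{t\ge0}$ be the convolution semigroup associated with $\psi$ by Theorem \ref{Bochner} and set $S(t):=\Phi(\widehat{\mu_t})=\int_0^\infty e^{-sA}\,\mu_t(ds)$, which is well defined since each $\mu_t$ is a subprobability measure, so $\widehat{\mu_t}\in\Wip(\C_+)$ and $\|S(t)\|\le\sup_{r\ge0}\|e^{-rA}\|$. The semigroup law $S(t)S(r)=S(t+r)$ is immediate from $\widehat{\mu_t}\,\widehat{\mu_r}=\widehat{\mu_t*\mu_r}=\widehat{\mu_{t+r}}$ and the homomorphism property of $\Phi$, while $S(0)=\Phi(\widehat{\delta_0})=I$. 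Strong continuity at $0$ I would obtain from \eqref{vague}: since $\mu_t\to\delta_0$ vaguely and $\mu_t(\R_+)=e^{-t\psi(0)}=e^{-ta}\to1$, the convergence is in fact narrow, whence $S(t)x=\int_0^\infty e^{-sA}x\,\mu_t(ds)\to x$ for every $x$; the semigroup law together with the uniform bound upgrades this to strong continuity for all $t$.

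It remains to identify the generator $-B$ of $(S(t))_{t\ge0}$ with $-\psi(A)$, and this is the step I expect to be the crux. Rather than analysing the short-time asymptotics of $\mu_t$ directly, I would compare resolvents. For $\re\lambda>0$ the measure $\nu_\lambda:=\int_0^\infty e^{-\lambda t}\mu_t\,dt$ is bounded with $\widehat{\nu_\lambda}$ equal to the function $(\lambda+\psi)^{-1}$, so integrating the Laplace representation of the resolvent and using the continuity of $\Phi$ gives
\[
(\lambda+B)^{-1}=\int_0^\infty e^{-\lambda t}S(t)\,dt=\Phi(\widehat{\nu_\lambda})=\bigl[(\lambda+\psi)^{-1}\bigr](A).
\]
Finally I would invoke the product rule of the extended calculus (Proposition \ref{funcal}(iii)): both $h:=(\lambda+\psi)^{-1}$ and $\psi h=1-\lambda h$ lie in $\Wip(\C_+)$, so with $R:=h(A)$ bounded one gets $\psi(A)R=(\psi h)(A)=I-\lambda R$ on all of $X$ and $R\psi(A)\subset(\psi h)(A)=I-\lambda R$, i.e.\ $R$ is a two-sided inverse of $\lambda+\psi(A)$. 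Thus $(\lambda+B)^{-1}=(\lambda+\psi(A))^{-1}$ for all $\lambda>0$, and therefore $B=\psi(A)$. The main obstacles throughout are bookkeeping ones---the Bochner integrability against the possibly infinite L\'evy measure $\mu$ near the origin in part (i), and the correct handling of domains in the product-rule step of part (ii)---rather than anything requiring a genuinely new idea.
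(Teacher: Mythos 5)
Your proposal is correct. Note, however, that the paper does not prove Theorem \ref{BochP} at all: it is stated ``for later reference'' as a summary of known results, with part (i) attributed to \cite[Corollary 2.6]{GHT} and part (ii) to \cite[Proposition 13.1 and Theorem 13.6]{SchilSonVon2010} and \cite{Phil52}. So what you have written is a self-contained derivation of facts the paper merely quotes. Your part (i) follows essentially the route of the cited reference: start from the regularised definition \eqref{part}, split $\psi(z)(1+z)^{-1}$ along the L\'evy--Hintchine triple into explicit $\mathrm{L}^1$-kernels (your bound $\|h_s\|_{\mathrm{L}^1}=2(1-e^{-s})\le 4s/(1+s)$ is exactly what makes the Bochner integral in $\Wip(\C_+)$ converge against $\mu$), and then use closedness of $A$ to move $1+A$ through the integral; the core argument via $n(n+A)^{-1}$ and Proposition \ref{funcal}(i) is the standard one. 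In part (ii) you deviate from the classical Phillips approach, which identifies the generator by differentiating $t\mapsto S(t)x$ at $t=0^+$ for $x\in\dom(A)$ using the short-time behaviour of $(\mu_t)$; instead you compare resolvents via $\widehat{\nu_\lambda}=(\lambda+\psi)^{-1}$ with $\nu_\lambda=\int_0^\infty e^{-\lambda t}\mu_t\,dt$ and then invoke the product rule of the extended calculus to show $[(\lambda+\psi)^{-1}](A)$ is a two-sided inverse of $\lambda+\psi(A)$. This is cleaner and arguably more in the spirit of the paper, since it reuses the calculus machinery of Section \ref{Fun} rather than any pointwise analysis of the subordinator; the only points that deserve the care you already give them are the narrow (not merely vague) convergence $\mu_t\to\delta_0$, which requires $\mu_t(\R_+)=e^{-ta}\to 1$, and the one-sided inclusion $R\,\psi(A)\subset I-\lambda R$ needed for injectivity.
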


The semigroup $(e^{-t\psi(A)})_{t\ge 0}$ defined in Theorem
\ref{BochP} is called {\em subordinate} to the semigroup
$(e^{-tA})_{t\ge 0}$ with respect to the Bernstein function
$\psi$. Theorem \ref{BochP}, (ii), implies that $(-\infty,0)\subset \rho(\psi(A))$ and
\begin{equation}\label{SupG}
\sup_{s>0}\|s(s+\psi(A))^{-1}\|\le \sup_{t>0}\,\|e^{-t\psi(A)}\|\le
\sup_{t>0}\,\|e^{-t A}\|.
\end{equation}

The next approximation result will often allow us to reduce
considerations to the case when the semigroup generator  is
invertible.

\begin{prop}\label{limit}
Let $A$ be the generator of a bounded $C_0$-semigroup on $X$ and
let $\psi$ be  a Bernstein function.
\begin{itemize}
\item [(i)]
If $\epsilon >0$ then $[\psi(\cdot+\epsilon)-\psi(\cdot)](A) \in \mathcal{L}(X)$
so that $\dom (\psi(A+\epsilon))=\dom (\psi(A))$ and
\begin{equation}\label{sumphil}
\psi(A+\epsilon)= \psi(A) + [\psi(\cdot +\epsilon)-\psi(\cdot)](A).
\end{equation}
\item [(ii)]
If  $x \in \dom(A)$ and $\epsilon>0,$ then
\begin{equation}
 \|\psi(A+\epsilon)x-\psi(A)x\| \le M (\psi(\epsilon)-\psi(0))\|x\|.
\end{equation}
where $M:=\sup_{t \ge 0}\|e^{-tA}\|\ge 1.$
\item [(iii)] For every $s>0,$
\begin{equation}\label{diffresolv}
\lim_{\epsilon \to
0+}\|(s+\psi(A+\epsilon))^{-1}x - (s+\psi(A))^{-1}x\|=0, \qquad x \in X.
\end{equation}
\end{itemize}
\end{prop}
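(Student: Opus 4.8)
The plan is to reduce the whole proposition to a single fact: the difference $g_\epsilon := \psi(\cdot+\epsilon)-\psi(\cdot)$ lies in $\Wip(\C_+)$, so that $[\psi(\cdot+\epsilon)-\psi(\cdot)](A)=g_\epsilon(A)$ is a \emph{bounded} operator, with $\|g_\epsilon(A)\|\le M(\psi(\epsilon)-\psi(0))$. To see this, write $\psi\sim(a,b,\mu)$ and use $(1-e^{-(z+\epsilon)s})-(1-e^{-zs})=e^{-zs}(1-e^{-\epsilon s})$ in \eqref{defBF} to get
\[
g_\epsilon(z)=b\epsilon+\int_{0+}^\infty e^{-zs}(1-e^{-\epsilon s})\,\mu(ds),\qquad z\in\C_+,
\]
that is, $g_\epsilon=\widehat{\rho_\epsilon}$ with $\rho_\epsilon:=b\epsilon\,\delta_0+(1-e^{-\epsilon s})\mu(ds)$. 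The crucial point is that $\rho_\epsilon$ is \emph{finite}: using $1-e^{-\epsilon s}\le 2\epsilon\,\frac{s}{1+s}$ on $(0,1]$ and $1-e^{-\epsilon s}\le 1\le 2\,\frac{s}{1+s}$ on $[1,\infty)$, the admissibility bound $\int_{0+}^\infty\frac{s}{1+s}\,\mu(ds)<\infty$ forces $\int_{0+}^\infty(1-e^{-\epsilon s})\,\mu(ds)<\infty$. Since all terms are nonnegative and $\psi(0)=a$, the total variation is
\[
|\rho_\epsilon|(\R_+)=b\epsilon+\int_{0+}^\infty(1-e^{-\epsilon s})\,\mu(ds)=\psi(\epsilon)-\psi(0).
\]
Hence $g_\epsilon\in\Wip(\C_+)$, $g_\epsilon(A)\in\mathcal L(X)$, and \eqref{hillestimate} gives $\|g_\epsilon(A)\|\le M(\psi(\epsilon)-\psi(0))$.

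For (i) I would invoke Theorem \ref{BochP}(i). Since $-(A+\epsilon)$ generates the bounded semigroup $(e^{-\epsilon t}e^{-tA})_{t\ge0}$ and $\dom(A+\epsilon)=\dom(A)$, applying \eqref{clos} to both $A+\epsilon$ and $A$ gives, for $x\in\dom(A)$,
\[
\psi(A+\epsilon)x-\psi(A)x=b\epsilon\,x+\int_{0+}^\infty e^{-sA}(1-e^{-\epsilon s})x\,\mu(ds)=g_\epsilon(A)x,
\]
the last equality being the Hille--Phillips action of $\widehat{\rho_\epsilon}$. Thus the two closed operators $\psi(A+\epsilon)$ and $\psi(A)+g_\epsilon(A)$ agree on $\dom(A)$. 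By Theorem \ref{BochP}(i), $\dom(A)$ is a core for $\psi(A)$, hence for its bounded perturbation $\psi(A)+g_\epsilon(A)$, while $\dom(A+\epsilon)=\dom(A)$ is a core for $\psi(A+\epsilon)$; two closed operators agreeing on a common core coincide, so $\psi(A+\epsilon)=\psi(A)+g_\epsilon(A)$, and boundedness of $g_\epsilon(A)$ forces $\dom(\psi(A+\epsilon))=\dom(\psi(A))$. Statement (ii) is then immediate: for $x\in\dom(A)$ one has $\psi(A+\epsilon)x-\psi(A)x=g_\epsilon(A)x$, whence $\|\psi(A+\epsilon)x-\psi(A)x\|\le M(\psi(\epsilon)-\psi(0))\|x\|$.

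For (iii) I would combine the resolvent identity with the decay of $\|g_\epsilon(A)\|$. By \eqref{SupG} the operators $s+\psi(A)$ and $s+\psi(A+\epsilon)$ are boundedly invertible with inverses of norm at most $M/s$. Writing $u=(s+\psi(A))^{-1}x\in\dom(\psi(A))=\dom(\psi(A+\epsilon))$ and using (i) in the form $(s+\psi(A+\epsilon))u=x+g_\epsilon(A)u$ yields
\[
(s+\psi(A+\epsilon))^{-1}x-(s+\psi(A))^{-1}x=-(s+\psi(A+\epsilon))^{-1}g_\epsilon(A)(s+\psi(A))^{-1}x,
\]
so that $\|(s+\psi(A+\epsilon))^{-1}x-(s+\psi(A))^{-1}x\|\le \frac{M^2}{s^2}\,\|g_\epsilon(A)\|\,\|x\|$. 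Since $\psi$ is continuous at $0$, $\psi(\epsilon)-\psi(0)\to0$ as $\epsilon\to0+$, hence $\|g_\epsilon(A)\|\to0$ and the right-hand side tends to $0$; this proves (iii), in fact with convergence in operator norm.

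The main obstacle is part (i): verifying that $g_\epsilon\in\Wip(\C_+)$ (the finiteness of $\rho_\epsilon$, which rests on the precise admissibility bound for $\mu$), and then upgrading the pointwise identity on the core $\dom(A)$ to a genuine operator identity together with the equality of domains. Once $g_\epsilon(A)$ is known to be bounded with $\|g_\epsilon(A)\|\le M(\psi(\epsilon)-\psi(0))$, parts (ii) and (iii) follow mechanically.
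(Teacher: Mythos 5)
Your proof is correct and follows essentially the same route as the paper: both rest on the observation that $\psi(\cdot+\epsilon)-\psi(\cdot)$ is the Laplace transform of the finite positive measure $b\epsilon\,\delta_0+(1-e^{-\epsilon s})\,\mu(ds)$ of total mass $\psi(\epsilon)-\psi(0)$, hence defines a bounded operator of norm at most $M(\psi(\epsilon)-\psi(0))$ via the Hille--Phillips calculus. The only cosmetic differences are that the paper gets (i) directly from the sum rule of the extended HP-calculus rather than from your core argument, and gets (iii) from the product rule plus density of $\dom(A)$, whereas your second-resolvent-identity computation yields the (slightly stronger) operator-norm convergence at once.
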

\begin{proof}
To prove (i) and (ii), we note  that if $\psi$ is a Bernstein function with the
L\'evy-Hintchine representation $(a,b,\mu)$ then
for all  $\epsilon>0$ and $\lambda \in \mathbb C_+,$
\[
\psi(\lambda+\epsilon)-\psi(\lambda) =b\epsilon+\int_{0+}^\infty
e^{-\lambda s}(1-e^{-\epsilon s})\,\mu(ds).
\]
Hence, $\psi(\cdot +\epsilon)-\psi(\cdot) \in A^1_+(\mathbb C_+)$ and
$$
\| \psi(\cdot +\epsilon)-\psi(\cdot)\|_{ A^1_+}=\psi(\epsilon)-\psi(0).
$$
From here by the (extended) HP-calculus it follows that
$$
\|[\psi(\cdot +\epsilon)-\psi(\cdot)](A)\| \le M (\psi(\epsilon)-\psi(0)).
$$
Moreover, since $[\psi(\cdot +\epsilon)-\psi(\cdot)](A) \in \mathcal{L}(X),$ by the sum rule
for the (extended) HP-calculus
we obtain that $\dom (\psi(A+\epsilon))=\dom (\psi(A))$ and \eqref{sumphil} holds.
Since $\dom (A) \subset \dom (\psi(A))$
by Theorem \ref{BochP}, (i), we have also
\begin{eqnarray}\label{Psi}
\|\psi(A+\epsilon)x-\psi(A)x\| \le M[\psi(\epsilon)-\psi(0)]\|x\|,
\end{eqnarray}
for all $x \in \dom (A)$. This finishes the proof of (i) and (ii).

Furthermore, by the product rule for the (extended) HP-calculus, for all $\epsilon, s >0$ and $x \in \dom (A),$
\begin{equation}
(s+\psi(A))^{-1}(\psi(A+\epsilon)-\psi(A))x=(\psi(A+\epsilon)-\psi(A))(s+\psi(A))^{-1}x.
\end{equation}
Hence the estimates \eqref{SupG} and \eqref{Psi} yield
\begin{eqnarray}\label{Psi4}
&&\|(s+\psi(A))^{-1}x-(s+\psi(A+\epsilon))^{-1}x\|\\
&=&\|(s+\psi(A))^{-1}(s+\psi(A+\epsilon))^{-1}(\psi(A+\epsilon)-\psi(A))x\|
\notag \\
&\le&  \frac{M^2}{s^2}\|\psi(A+\epsilon)x-\psi(A)x\|\notag \\
&\le&
\frac{M^3}{s^2}(\psi(\epsilon)-\psi(0))\|x\|\notag \\
&\to 0,& \quad \epsilon\to 0+.\notag
\end{eqnarray}
As $\dom(A)$ is dense in $X,$ (iii) follows.
\end{proof}

Let $\psi\in \mathcal{BF}$. Then $\psi$ is holomorphic in $\C_{+}$
and continuous in $\overline{\C}_{+}$, and by Lemma \ref{psi}, (ii)
 we have
\begin{equation}\label{RR1}
\tau^2\psi\in H_0^\infty(\C_{+}) \quad \text{and} \quad \tau^2/\psi\in
H_0^\infty(\Sigma_\omega),\quad \omega\in (0,\pi/2),
\end{equation}
where $\tau$ is defined by (\ref{tau}).

Let $A\in \Sect(\alpha)$ for some $\alpha\in [0,\pi/2),$ so that,
in particular, $-A$ generates a sectorially bounded holomorphic
$C_0$-semigroup. Suppose that $A$ has dense range. Let $f \in \mathcal P,$ that is  $f:=1/\psi$ for some  $\psi\in
\mathcal{BF}, \psi \not \equiv 0.$  Then, by (\ref{RR1}), the operators
$\psi(A)$ and $f(A)$ can be defined by the (extended) holomorphic
calculus with the regulariser $\tau_\epsilon$ of the form
\begin{equation}\label{part1}
\tau_\epsilon(z):=\left(\frac{z}{(\epsilon+z)(1+\epsilon z)}\right)^2,
\end{equation}
for any fixed $\epsilon>0.$ Thus by \cite[Proposition
1.2.2,\,d]{Haa2006}, we have
\begin{equation}\label{fpsi}
f(A)=[\psi(A)]^{-1}.
\end{equation}
Moreover, for every  $h$ of the form
\begin{equation}\label{sum}
h=\psi+f,\qquad \psi\in \mathcal{BF},\quad f\in \mathcal{P},
\end{equation}
and every $A\in {\rm \Sect}(\alpha)$, $\alpha\in [0,\pi/2)$, with
dense range, the (closed) operator $h(A)$ is well-defined by the
extended holomorphic functional calculus (with the
regulariser $\tau_\epsilon$, $\epsilon>0$.)

\begin{prop}\label{regSP}
Let $h$ be of the form (\ref{sum}) and $A\in \Sect(\alpha)$,
$\alpha\in [0,\pi/2)$. If $A$ has dense range then
\begin{equation}\label{Hh}
\overline{\psi(A)+f(A)}=h(A).
\end{equation}
\end{prop}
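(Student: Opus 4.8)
The plan is to prove the two inclusions separately: the inclusion $\overline{\psi(A)+f(A)}\subset h(A)$ is immediate, while the reverse one rests on a regularisation argument. For the easy direction, since $h=\psi+f$, the sum rule (Proposition \ref{funcal}, (ii)) applied in the extended holomorphic calculus gives $\psi(A)+f(A)\subset h(A)$. As $h(A)$ is closed (being defined through the extended calculus), passing to the closure yields $\overline{\psi(A)+f(A)}\subset h(A)$.

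The heart of the matter is the reverse inclusion $h(A)\subset\overline{\psi(A)+f(A)}$. Here I would regularise a fixed $x\in\dom(h(A))$ by the family $\tau_\epsilon(A)$, $\epsilon>0$, with $\tau_\epsilon$ as in \eqref{part1}. The three functions $\tau_\epsilon\psi$, $\tau_\epsilon f$ and $\tau_\epsilon h$ all lie in $H_0^\infty$ (they vanish at the origin and decay at infinity, exactly as in \eqref{RR1}, using Lemma \ref{psi}, (ii) for $\psi$ and the analogous bounds for the potential $f$), so each is treated by the primary calculus. Setting $x_\epsilon:=\tau_\epsilon(A)x$, the product rule (Proposition \ref{funcal}, (iii)) with the bounded operator $\tau_\epsilon(A)$ gives $\psi(A)\tau_\epsilon(A)=(\tau_\epsilon\psi)(A)$ and $f(A)\tau_\epsilon(A)=(\tau_\epsilon f)(A)$ as everywhere-defined bounded operators; in particular $x_\epsilon\in\dom(\psi(A))\cap\dom(f(A))=\dom(\psi(A)+f(A))$. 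Invoking the linearity of the primary calculus together with $\tau_\epsilon h=\tau_\epsilon\psi+\tau_\epsilon f$, I would then compute
\begin{equation*}
(\psi(A)+f(A))x_\epsilon=(\tau_\epsilon\psi)(A)x+(\tau_\epsilon f)(A)x=(\tau_\epsilon h)(A)x=\tau_\epsilon(A)\,h(A)x,
\end{equation*}
the last equality being the defining relation for $h(A)$ regularised by $\tau_\epsilon$, valid since $x\in\dom(h(A))$.

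It then remains to pass to the limit $\epsilon\to0+$. Writing $\tau_\epsilon(A)=[A(\epsilon+A)^{-1}]^2[(1+\epsilon A)^{-1}]^2$, this family is uniformly bounded and converges strongly to the identity, since by the standard approximation properties of sectorial operators $\epsilon(\epsilon+A)^{-1}\to0$ on $\overline{\ran}(A)=X$ (dense range) and $(1+\epsilon A)^{-1}\to I$ on $\overline{\dom}(A)=X$. Hence $x_\epsilon\to x$ and, by the displayed identity, $(\psi(A)+f(A))x_\epsilon=\tau_\epsilon(A)h(A)x\to h(A)x$. This shows $x\in\dom(\overline{\psi(A)+f(A)})$ with $\overline{\psi(A)+f(A)}x=h(A)x$, giving $h(A)\subset\overline{\psi(A)+f(A)}$ and hence \eqref{Hh}.

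The main obstacle I anticipate is the careful bookkeeping of the functional-calculus rules rather than any single hard estimate: one must check that $\tau_\epsilon\psi$, $\tau_\epsilon f$, $\tau_\epsilon h\in H_0^\infty$ so that the primary (not merely extended) calculus applies, and that the product and sum rules may be used with equality precisely because $\tau_\epsilon(A)$ is bounded. The dense-range hypothesis enters in exactly one place, namely the strong convergence $\tau_\epsilon(A)\to I$.
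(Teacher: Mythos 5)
Your argument is correct and is essentially the paper's proof written out in full: the paper simply records the strong convergence $\tau_\epsilon(A)x\to x$ and then delegates the rest to \cite[Proposition 3.3]{Ha05} or \cite[Proposition 3.2]{Clark}, whose proofs are precisely your regularisation-and-limit argument with the sum and product rules. The one place to be slightly careful is that $\tau_\epsilon(A)\to I$ strongly uses density of $\dom(A)$ as well as of $\ran(A)$, which holds here since $-A$ generates a $C_0$-semigroup.
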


\begin{proof}
We have
\[
\lim_{\epsilon \to 0}\,\tau_\epsilon(A)x=x,\qquad x\in X,
\]
and then the statement follows from  \cite[Proposition 3.3]{Ha05}
or \cite[Proposition 3.2]{Clark}.
\end{proof}

\subsection{Compatibility of functional calculi}\label{compat}

This subsection allows us to unify the results of previous
subsections and to show that different definitions of the same
function of semigroup generator are compatible.

The first result proved in \cite[Theorem 4.12]{BaGoTa}  shows that
the extended holomorphic functional calculus and the Hirsch
functional calculus are compatible.
\begin{prop}\label{Sovp2}
Let $\psi\in \mathcal{CBF}$ and let $A$ be an injective sectorial
operator on $X$. Then the operator $\psi(A)$ defined by the Hirsch
calculus coincides with  $\psi(A)$ defined via the extended
holomorphic functional calculus.
\end{prop}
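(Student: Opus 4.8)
The plan is to reduce everything to the Stieltjes representation of $\psi$ and then to interchange the Cauchy integral defining the holomorphic calculus with the Stieltjes integral defining the Hirsch calculus. Write $\psi\in\mathcal{CBF}$ via \eqref{Cbf} as $\psi(\lambda)=a+b\lambda+\int_{0+}^\infty \frac{\lambda\,\sigma(dt)}{\lambda+t}$, fix $\omega\in(\alpha,\pi)$ where $A\in\Sect(\alpha)$, $\alpha\in[0,\pi)$, and denote the two candidate operators by $\psi(A)_{\mathrm{hol}}$ and $\psi(A)_{\mathrm{H}}=\overline{\psi_0(A)}$, the latter given on $\dom(A)$ by \eqref{defbernop}. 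Since both calculi are linear and send the constant $1$ to $I$ and $\lambda$ to $A$, it suffices to treat the pure integral part, i.e. to assume $a=b=0$. Using the Stieltjes representation together with the bound $|\lambda+t|\ge c_\omega(|\lambda|+t)$ on $\Sigma_\omega$ (as in Proposition \ref{P1}(i)), one checks $\psi\in\mathcal{B}(\Sigma_\omega)$ (linear growth at infinity, bounded near $0$), so $\psi(A)_{\mathrm{hol}}$ is given by \eqref{defcal} with regulariser $\tau^n$ for $n$ large, $\psi(A)_{\mathrm{hol}}=[\tau^n(A)]^{-1}(\tau^n\psi)(A)$; here $\tau^n(A)$ is injective because $A$ is.

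First I would record the elementary fact that for each fixed $t>0$ the rational function $g_t(\lambda):=\lambda/(\lambda+t)=1-t(\lambda+t)^{-1}$ lies in $\mathcal{B}(\Sigma_\omega)$ and that its holomorphic calculus reproduces the resolvent, $g_t(A)=A(t+A)^{-1}$. Writing $\Gamma=\partial\Sigma_{\omega_0}$ with $\alpha<\omega_0<\omega$, I would substitute the Stieltjes integral into the Cauchy integral \eqref{Cauchy} for the regularised function and interchange the two integrations:
\begin{align*}
(\tau^n\psi)(A)&=\frac{1}{2\pi i}\int_\Gamma \tau^n(\lambda)\Big(\int_{0+}^\infty \frac{\lambda}{\lambda+t}\,\sigma(dt)\Big)(\lambda-A)^{-1}\,d\lambda\\
&=\int_{0+}^\infty\Big(\frac{1}{2\pi i}\int_\Gamma \tau^n(\lambda)\,g_t(\lambda)\,(\lambda-A)^{-1}\,d\lambda\Big)\,\sigma(dt)\\
&=\int_{0+}^\infty (\tau^n g_t)(A)\,\sigma(dt)=\int_{0+}^\infty \tau^n(A)\,A(t+A)^{-1}\,\sigma(dt)=:T,
\end{align*}
where the last line uses the product rule (Proposition \ref{funcal}(iii)). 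The operator-valued integral $T$ converges absolutely in $\mathcal{L}(X)$ since $\|\tau^n(A)A(t+A)^{-1}\|=O(t^{-1})$ as $t\to\infty$ and stays bounded as $t\to0+$, while $\int_{0+}^\infty \sigma(dt)/(1+t)<\infty$.

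The main obstacle is the Fubini step, which I would justify by the absolute-convergence bound
\[
\int_\Gamma\!\int_{0+}^\infty |\tau^n(\lambda)|\,\frac{|\lambda|}{|\lambda+t|}\,\|(\lambda-A)^{-1}\|\,\sigma(dt)\,|d\lambda|<\infty,
\]
which follows from $\|(\lambda-A)^{-1}\|\le M/|\lambda|$ on $\Gamma$, from $|\lambda+t|\ge c_\omega(|\lambda|+t)$, from $\int \sigma(dt)/(1+t)<\infty$, and from the decay $|\tau^n(\lambda)|\sim|\lambda|^{\pm n}$ at the two ends of $\Gamma$ (taking $n$ large is the decisive point). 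Granting this, for every $x\in\dom(A)$ the Hirsch integral $\psi_0(A)x=\int_{0+}^\infty A(t+A)^{-1}x\,\sigma(dt)$ converges, and since $\tau^n(A)$ is bounded, $Tx=\tau^n(A)\psi_0(A)x\in\ran(\tau^n(A))$. Hence $x\in\dom(\psi(A)_{\mathrm{hol}})$ and, by injectivity of $\tau^n(A)$,
\[
\psi(A)_{\mathrm{hol}}x=[\tau^n(A)]^{-1}Tx=\psi_0(A)x,
\]
so $\psi(A)_{\mathrm{hol}}$ extends $\psi_0(A)$ and, being closed, extends $\psi(A)_{\mathrm{H}}=\overline{\psi_0(A)}$ as well.

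Finally I would upgrade this inclusion to equality. The Hirsch operator $\psi(A)_{\mathrm{H}}$ is sectorial by Theorem \ref{frpowH}(i), so $1+\psi(A)_{\mathrm{H}}$ maps its domain bijectively onto $X$. On the other hand $1+\psi(A)_{\mathrm{hol}}=(1+\psi)(A)$ (sum rule, Proposition \ref{funcal}(ii)) is injective: both $1+\psi$ and its reciprocal $1/(1+\psi)$ are holomorphic and regularisable on $\Sigma_\omega$ (the first has linear growth, the second is bounded), so the $f,1/f$-rule of the holomorphic functional calculus \cite{Haa2006} gives $(1+\psi)(A)^{-1}=(1/(1+\psi))(A)$. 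Since an injective operator cannot properly extend one that is already bijective onto $X$, the inclusion $1+\psi(A)_{\mathrm{H}}\subset 1+\psi(A)_{\mathrm{hol}}$ forces $\psi(A)_{\mathrm{H}}=\psi(A)_{\mathrm{hol}}$, which is the assertion.
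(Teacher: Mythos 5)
Your proposal is correct, but it is worth noting that the paper does not actually prove Proposition \ref{Sovp2} at all: it simply cites \cite[Theorem 4.12]{BaGoTa}, so your argument is a genuinely self-contained substitute rather than a variant of an in-paper proof. Your route is the natural one: insert the Stieltjes representation into the Cauchy integral for the regularised function $\tau^n\psi$, apply Fubini to identify $(\tau^n\psi)(A)$ with $\tau^n(A)\psi_0(A)$ on $\dom(A)$ (giving $\overline{\psi_0(A)}\subset\psi(A)_{\mathrm{hol}}$ since the latter is closed), and then upgrade the inclusion to equality by playing the surjectivity of $1+\psi(A)_{\mathrm{H}}$ (sectoriality, Theorem \ref{frpowH}(i)) against the injectivity of $1+\psi(A)_{\mathrm{hol}}$. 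All the quantitative steps check out: the Fubini bound works for $n\ge 2$ because the inner Stieltjes integral is $O(1+|\lambda|)$ on $\Sigma_\omega$ by \eqref{mmu} and the sector estimate $|\lambda+t|\ge c_\omega(|\lambda|+t)$, and $\|\tau^n(A)A(t+A)^{-1}\|=O(t^{-1})$ does hold because $\tau^n(A)$ carries a factor $(1+A)^{-1}$ and $A(t+A)^{-1}(1+A)^{-1}=O(t^{-1})$ by the resolvent identity. Two small points deserve tightening. First, the opening reduction to $a=b=0$ is both unnecessary and slightly misleading: the sum rule (Proposition \ref{funcal}(ii)) only gives an inclusion when both summands are unbounded (as $bA$ and the integral part are), so one cannot literally reassemble $\psi(A)_{\mathrm{hol}}$ from the three pieces; instead, just run your Fubini computation with the full representation \eqref{Cbf}, which yields $(\tau^n\psi)(A)x=\tau^n(A)\bigl(ax+bAx+\int_{0+}^\infty A(t+A)^{-1}x\,\sigma(dt)\bigr)$ for $x\in\dom(A)$ with no extra effort. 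Second, the assertion that $1/(1+\psi)$ is bounded on the large sector $\Sigma_\omega$, $\omega\in(\pi/2,\pi)$, needs a word of justification, since $\re\psi$ need not be nonnegative there; the clean argument is that $1+\psi\in\mathcal{CBF}$ with $(1+\psi)(0+)\ge 1$, so its reciprocal is a Stieltjes function with vanishing $c/\lambda$-part and finite value at $0+$, hence bounded on every $\Sigma_\omega$ with $\omega<\pi$, which makes the $f$--$1/f$ rule of \cite[Theorem 1.3.2]{Haa2006} applicable. With these two repairs the proof is complete.
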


The second result yields compatibility of the extended
Hille-Phillips calculus and the extended holomorphic calculus on
appropriate functions.
\begin{prop}\label{Sovp1}
Let $\psi\in \mathcal{BF}.$ Suppose that $\psi$ admits holomorphic
extension $\tilde{\psi}$ to $\Sigma_\omega$ for some
$\omega\in (\pi/2,\pi)$ so that
\[
\tilde{\psi}\in \mathcal{B}(\Sigma_\omega).
\]
Let $-A$ be the generator of a bounded $C_0$-semigroup and let $A$
be injective. Let $\psi(A)$ be defined by the extended
Hille-Phillips calculus and $\tilde{\psi}(A)$ be defined via the
extended holomorphic functional calculus. Then
\begin{equation}\label{that}
\tilde{\psi}(A)=\psi(A).
\end{equation}
\end{prop}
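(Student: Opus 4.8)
The plan is to exhibit a single function that simultaneously regularises $\tilde{\psi}$ in the holomorphic calculus and $\psi$ in the Hille--Phillips calculus, and then to reduce the asserted identity to the compatibility of the two \emph{primary} calculi on functions lying in both of their defining algebras. Fix $\gamma\in(\pi/2,\omega)$ and recall that, $-A$ being the generator of a bounded $C_0$-semigroup, we have $A\in\Sect(\pi/2)$, so the extended holomorphic calculus over $\Sigma_\gamma$ is available since $\pi/2<\gamma$. As a common regulariser I would take $e=\tau^n$, where $\tau(\lambda)=\lambda/(1+\lambda)^2$ as in \eqref{tau} and $n$ is large. First I would check that $e$ sits in both primary algebras: writing $\tau(\lambda)=(1+\lambda)^{-1}-(1+\lambda)^{-2}$ one sees $\tau=\widehat{\nu}$ with $\nu(ds)=(1-s)e^{-s}\,ds\in\mathrm{M}_b(\mathbb{R}_+)$, so $\tau\in\Wip(\C_+)$ and hence $\tau^n\in\Wip(\C_+)$, while $\tau^n\in H_0^\infty(\Sigma_\gamma)$ with $\tau^n(A)$ injective because $A$ is injective. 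Likewise I would verify that $e\psi$ lies in both algebras: $\tau\psi=\frac{\lambda}{1+\lambda}\cdot\frac{\psi(\lambda)}{1+\lambda}\in\Wip(\C_+)$ by Proposition \ref{PrS} (and $\frac{\lambda}{1+\lambda}=\widehat{\delta_0-e^{-s}ds}\in\Wip(\C_+)$), whence $\tau^n\psi\in\Wip(\C_+)$, whereas $\tau^n\tilde{\psi}\in H_0^\infty(\Sigma_\gamma)$ for $n$ large since $\tilde{\psi}\in\mathcal{B}(\Sigma_\omega)$ has at most polynomial growth (Lemma \ref{psi}) and $\tau^n$ supplies decay of order $n$ at $0$ and at $\infty$.

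Granting the primary compatibility stated below, the conclusion is then purely formal. Since $\tau^n$ is a legitimate regulariser for $\psi$ in the extended HP-calculus and for $\tilde{\psi}$ in the extended holomorphic calculus, the regulariser-independence built into \eqref{abstrdef} gives
\[
\psi(A)=\tau^n(A)^{-1}\,(\tau^n\psi)(A),\qquad \tilde{\psi}(A)=\tau^n(A)^{-1}\,(\tau^n\tilde{\psi})(A),
\]
the first computed in the HP-calculus and the second in the holomorphic calculus. As $\tau^n$ and $\tau^n\psi=\tau^n\tilde{\psi}$ (these agree on $\Sigma_\gamma$, $\tilde{\psi}$ extending $\psi$) both belong to $\Wip(\C_+)\cap H_0^\infty(\Sigma_\gamma)$, the primary compatibility identifies $\tau^n(A)$ and $(\tau^n\psi)(A)$ computed in the two calculi, and \eqref{that} follows.

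The heart of the matter --- and the step I expect to be the main obstacle --- is the primary compatibility: for $g=\widehat{\mu}\in\Wip(\C_+)\cap H_0^\infty(\Sigma_\gamma)$ with $\gamma>\pi/2$ one should have
\[
\frac{1}{2\pi i}\int_\Gamma g(\lambda)(\lambda-A)^{-1}\,d\lambda=\int_0^\infty e^{-sA}\,\mu(ds),
\]
where $\Gamma=\partial\Sigma_{\omega_0}$ is the downward oriented contour of \eqref{Cauchy} with $\pi/2<\omega_0<\gamma$. The difficulty is precisely that $A$ has sectoriality angle $\pi/2$, so the usual device of representing the semigroup as a Cauchy integral over a sector boundary fails (the kernel $e^{-s\lambda}$ would blow up on $\Gamma$). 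I would circumvent this by exploiting that ${\rm Re}\,\lambda<0$ on $\Gamma$: the resolvent identity $(\lambda-A)^{-1}=-\int_0^\infty e^{\lambda t}e^{-tA}\,dt$ (valid since ${\rm Re}(-\lambda)>0$) now involves the \emph{decaying} kernel $e^{\lambda t}$, and the bound $|g(\lambda)|\le C\min(|\lambda|^s,|\lambda|^{-s})$ with $s>1$ (arranged by taking $n$ large) makes the ensuing double integral absolutely convergent. After Fubini I am left with $-\int_0^\infty\big[\frac{1}{2\pi i}\int_\Gamma g(\lambda)e^{\lambda t}\,d\lambda\big]e^{-tA}\,dt$; since $g\,e^{\lambda t}$ is holomorphic in $\Sigma_\gamma$ and decays suitably, I would deform $\Gamma$ onto the downward imaginary axis, turning the inner integral into $-\frac{1}{2\pi}\int_{\mathbb{R}}g(iy)e^{iyt}\,dy$. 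Because $g\in H_0^\infty(\Sigma_\gamma)$ forces $g(i\,\cdot)\in L^1(\mathbb{R})$, the measure $\mu$ is absolutely continuous with a continuous density $f$, and Fourier inversion identifies the inner integral with $-f(t)$; reinserting yields $\int_0^\infty f(t)e^{-tA}\,dt$, the HP-operator. Careful tracking of the orientation of $\Gamma$ and of the sign in the resolvent identity (sanity-checked against $g=\tau$, where both sides equal $A(1+A)^{-2}$) fixes the signs and completes the argument.
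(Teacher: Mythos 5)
Your proof is correct and follows essentially the same route as the paper: both use $\tau^n$ as a common regulariser for $\psi$ in the Hille--Phillips calculus and for $\tilde{\psi}$ in the holomorphic calculus, and reduce the identity \eqref{that} to compatibility of the two \emph{primary} calculi on $\Wip(\C_+)\cap H_0^\infty(\Sigma_\gamma)$. The only difference is that the paper simply cites \cite[Proposition 3.3.2]{Haa2006} for that primary compatibility, whereas you prove it from scratch via the Laplace representation of the resolvent on $\partial\Sigma_{\omega_0}$ (where ${\rm Re}\,\lambda<0$), Fubini, contour deformation to the imaginary axis, and Fourier inversion --- a correct, self-contained substitute.
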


\begin{proof}
Note that for some $n\in \N$
\[
\tilde{\tau}(\lambda)=\tau^n(\lambda)=\left(\frac{\lambda}{(\lambda+1)^2}\right)^n,
\]
is a  regulariser for $\tilde{\psi}$ in the holomorphic functional
calculus. By  Proposition \ref{PrS},
the function $\tilde{\tau}$ is a
regulariser for $\psi$ in the extended $HP$-calculus, i.e. there exists a  bounded Radon
measure $\mu$ on $\R_{+}$ such that
\[
\tilde{\tau} \in \Wip(\C_+),\quad (\tilde{\tau} \psi)(\lambda)=\int_0^\infty
e^{-\lambda s}\,\mu(ds)\in \Wip(\C_+).
\]
Then, by \cite[Proposition 3.3.2]{Haa2006} on compatibility of the
Hille-Phillips and the holomorphic functional calculi, we have
\begin{equation}\label{same}
(\tilde{\tau} \tilde{\psi})(A)=\int_0^\infty e^{-sA}\,\mu(ds)=(\tilde{\tau}\psi)(A).
\end{equation}
By the same \cite[Proposition 3.3.2]{Haa2006}, the operators
$\tilde{\tau}(A)$ given by the holomorphic calculus and by the
$HP$-calculus are the same. So, (\ref{same}) implies
(\ref{that}).
\end{proof}

\section{Resolvent estimates for certain functions of sectorial operators}\label{frac}

The following estimates of independent interest will be
instrumental in subsequent sections. In a qualitative form, they are essentially known.
However, for our purposes, it is crucial to equip them with explicit constants and specify large enough sectors 
where such estimates hold.

The first estimate is a version of the well-known
result on sectoriality of fractional powers due to Kato,
\cite[Theorem 2]{Kato1960}. However we give an explicit constant in the
sectoriality condition restricted to an appropriate sector, and
this could be helpful in many instances.

\begin{prop}\label{PrBBL1}
Let $A\in {\rm Sect},$ so that
\begin{equation*}
\|s(A+s)^{-1}\|\le M,\qquad s>0.
\end{equation*}
If $r\in (0,1)$ and $\gamma \in (0,(1-r)\pi),$ then
\begin{equation}\label{Zzz}
\|(A^r+z)^{-1}\|\le \frac{M\sin(\pi r)}{\pi r} \frac{(\pi
r+\gamma)}{\sin(\pi r+\gamma)}\frac{1}{|z|}, \quad z\in
\Sigma_\gamma.
\end{equation}
\end{prop}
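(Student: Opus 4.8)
The plan is to start from Kato's integral representation for the resolvent of a fractional power and to extract the explicit constant by a direct scalar estimate. Since $A\in\Sect$, Proposition \ref{frpow} guarantees that $A^r$ is sectorial, and the hypothesis $\gamma\in(0,(1-r)\pi)$ is exactly what forces $-z$ (for $z\in\Sigma_\gamma$) to lie outside the spectral sector of $A^r$, so that $(A^r+z)^{-1}$ exists. The representation I would invoke is the classical one of \cite[Theorem 2]{Kato1960},
\[
(A^r+z)^{-1}=\frac{\sin(\pi r)}{\pi}\int_0^\infty\frac{t^r}{t^{2r}+2t^r z\cos(\pi r)+z^2}\,(t+A)^{-1}\,dt ,
\]
whose scalar analogue can be verified by partial fractions in $t^r$ (a quick sanity check is the limit $z\to 0$, which reduces to $\int_0^\infty t^{-r}(t+a)^{-1}\,dt=\pi a^{-r}/\sin(\pi r)$). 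It lifts to the operator level because the integrand is norm-integrable: the kernel behaves like $t^{r-1}$ near $0$ and like $t^{-r-1}$ at infinity after inserting $\|(t+A)^{-1}\|\le M/t$.

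Next I would take operator norms and use $\|(t+A)^{-1}\|\le M/t$, which follows from $\|s(A+s)^{-1}\|\le M$, thereby reducing the whole estimate to the scalar integral
\[
\|(A^r+z)^{-1}\|\le\frac{M\sin(\pi r)}{\pi}\int_0^\infty\frac{t^{r-1}}{\,|t^{2r}+2t^r z\cos(\pi r)+z^2|\,}\,dt .
\]
The denominator factors as $|t^r+ze^{i\pi r}|\,|t^r+ze^{-i\pi r}|$, so after the substitution $u=t^r$ and then $u=|z|v$ the integral becomes $\dfrac{1}{r|z|}\displaystyle\int_0^\infty\dfrac{dv}{|v+e^{i(\theta+\pi r)}|\,|v+e^{i(\theta-\pi r)}|}$, where $\theta=\arg z$ satisfies $|\theta|\le\gamma$.

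The decisive step, responsible for the sharp constant, is the lower bound on the two factors. Writing $\delta:=\pi r+\gamma<\pi$ and noting that $|\theta\pm\pi r|\le\delta$, the monotonicity of $\cos$ on $[0,\pi]$ gives
\[
|v+e^{i(\theta\pm\pi r)}|=\sqrt{v^2+2v\cos(\theta\pm\pi r)+1}\;\ge\;\sqrt{v^2+2v\cos\delta+1},
\]
so the product of the two factors is at least $v^2+2v\cos\delta+1$. The remaining integral is then elementary: completing the square gives $\int_0^\infty(v^2+2v\cos\delta+1)^{-1}\,dv=\delta/\sin\delta$ for $\delta\in(0,\pi)$. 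Combining the three displays produces exactly $\|(A^r+z)^{-1}\|\le\frac{M\sin(\pi r)}{\pi r}\cdot\frac{\pi r+\gamma}{\sin(\pi r+\gamma)}\cdot\frac{1}{|z|}$.

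I expect the only genuine subtlety to be the justification of the operator representation for a possibly non-injective sectorial $A$, together with the integrability of the defining kernel near $0$ and $\infty$; once this is settled, the argument rests entirely on the cosine-monotonicity trick (which replaces both arguments $\theta\pm\pi r$ by the common extremal value $\delta$) and on a single elementary integral, so no further difficulty arises.
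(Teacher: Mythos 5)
Your proposal is correct and follows essentially the same route as the paper: both start from Kato's integral representation for $(A^r+z)^{-1}$, bound the resolvent by $M/t$, lower-bound the two factors $|t^r e^{\pm i\pi r}+z|$ via the cosine-monotonicity observation with the common extremal angle $\pi r+\gamma$, and finish with the elementary integral $\int_0^\infty (v^2+2v\cos\delta+1)^{-1}\,dv=\delta/\sin\delta$. The only cosmetic difference is that you substitute $u=t^r$, $u=|z|v$ explicitly, whereas the paper applies the tabulated formula $\int_0^\infty |te^{i\beta}+s|^{-2}\,dt=\beta/(s\sin\beta)$ directly.
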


\begin{proof}
If $z\in \Sigma_{\gamma},$ then by \cite[Theorem 2]{Kato1960} we have
\begin{eqnarray}\label{Kr10}
(A^r+z)^{-1}&=& \frac{\sin(\pi r)}{\pi} \int_0^\infty\frac{t^r
(A+t)^{-1}\,dt}
{(t^re^{i\pi r}+z)(t^r e^{-i\pi r}+z)}\\
&=&\frac{\sin(\pi r)}{\pi} \int_0^\infty\frac{t^r (A+t)^{-1}\,dt}
{t^{2r}+2t^r z\cos \pi r+z^2}.\notag
\end{eqnarray}
Since
\[
|t^re^{\pm i\pi r}+z|\ge |t^re^{i(\pi r+\gamma)}+|z||,\quad
t>0,\qquad z\in \Sigma_\gamma,
\]
and
\[
\int_0^\infty \frac{dt}{|te^{i\beta}+s|^2}=\frac{\beta}{s\sin \beta},
\quad \beta\in (-\pi,\pi),\quad s>0
\]
(see e.g. (\cite[Formula 2.2.9.25]{Prudnikov}), (\ref{Kr10})
implies that
\begin{eqnarray*}
\|(A^r+z)^{-1}\|&\le& \frac{M\sin(\pi r)}{\pi}
\int_0^\infty\frac{t^{r-1}\,dt} {|t^re^{i\pi r}+z| |t^r e^{-i\pi
r}+z|}
\\
&\le& \frac{M\sin(\pi r)}{\pi} \int_0^\infty\frac{t^{r-1}\,dt}
{|t^re^{i(\pi r+\gamma)}+|z||^2}\\
&=& \frac{M\sin(\pi r)}{\pi r} \int_0^\infty\frac{dt}
{|te^{i(\pi r+\gamma)}+|z||^2}\\
&=&\frac{M\sin(\pi r)}{\pi r} \frac{\pi r+\gamma}{\sin(\pi
r+\gamma)}\frac{1}{|z|}, \qquad z\in \Sigma_\gamma.
\end{eqnarray*}
\end{proof}

The next result is an explicit version of Proposition \ref{frpow}
for $q>1$. We are
not aware of an estimate of this kind in the literature.

\begin{prop}\label{PrBBL10}
Let operator $A\in \Sect(\alpha)$, $\alpha\in (0,\pi),$ so that
\begin{equation}\label{EsR0}
\|z(A+z)^{-1}\|\le M(A,\alpha'),\qquad z\in
\Sigma_{\pi-\alpha'},\quad \alpha'\in (\alpha,\pi),
\end{equation}
and, in particular,
\begin{equation}\label{part0}
\|s(A+s)^{-1}\|\le M(A),\qquad s>0.
\end{equation}
Let $q>1$ be such that $q\alpha<\pi$. Then $A^q\in \Sect(q\alpha)$
and, moreover, for every $\gamma \in (0,\pi-q\alpha),$
\begin{equation}\label{MEs0}
\|z(A^q+z)^{-1}\|\le \tilde{M}_{\alpha,q;\gamma},\qquad z \in \Sigma_{\gamma},
\end{equation}
where
\begin{equation}\label{Mn}
\tilde{M}_{\alpha,q;\gamma}:=
M(A)+\frac{2M(A,\beta_{\alpha,q;\gamma})}{\pi\cos(\beta_{\alpha,q;\gamma}/2)
\cos (q\beta_{\alpha,q;\gamma}/2)}, \qquad
\beta_{\alpha,q;\gamma}:=\frac{\alpha+(\pi-\gamma)/q}{2}.
\end{equation}
\end{prop}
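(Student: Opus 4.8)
The plan is to construct $(A^q+z)^{-1}$ from the Cauchy integral \eqref{Cauchy} of the holomorphic functional calculus and to neutralise the one real difficulty, the failure of the relevant symbol to decay at the origin, by a subtraction that simultaneously produces the additive term $M(A)$. First I would record that $A^q\in\Sect(q\alpha)$ by Proposition \ref{frpow}, so that $-z\notin\overline{\Sigma}_{q\alpha}$ for $z\in\Sigma_\gamma$ (here $\gamma<\pi-q\alpha$ is used) and $(A^q+z)^{-1}$ is a genuine bounded operator. The natural contour angle is $\beta:=\beta_{\alpha,q;\gamma}=\tfrac12\bigl(\alpha+(\pi-\gamma)/q\bigr)$, the \emph{midpoint} of the admissible range $(\alpha,(\pi-\gamma)/q)$: it satisfies $\beta>\alpha$, so that $\partial\Sigma_\beta\subset\rho(A)$ and $\|(\lambda-A)^{-1}\|\le M(A,\beta)/|\lambda|$ on $\partial\Sigma_\beta$ by the definition of $M(A,\beta)$ (equivalently \eqref{EsR0}); and it satisfies $q\beta+\gamma<\pi$, so that $\lambda^q$ and $z$ never point in opposite directions along the contour. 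On $\Sigma_\beta$ the symbol $\lambda\mapsto z/(\lambda^q+z)$ is holomorphic and represents $z(A^q+z)^{-1}$ in the calculus.

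The obstacle is that $z/(\lambda^q+z)\to1$ as $\lambda\to0$, so $\int_{\partial\Sigma_\beta}\tfrac{|z|}{|\lambda^q+z|}\tfrac{|d\lambda|}{|\lambda|}$ diverges and the naive Cauchy integral does not converge. I would remove this by subtracting a term with the same value at the origin: for $s>0$ put
\[
h(\lambda):=\frac{z}{\lambda^q+z}-\frac{s}{\lambda+s}=\frac{\lambda\,(z-s\lambda^{q-1})}{(\lambda^q+z)(\lambda+s)}.
\]
Since $h(\lambda)=O(|\lambda|)$ as $\lambda\to0$ and $h(\lambda)=O(|\lambda|^{-1})$ as $\lambda\to\infty$, one has $h\in H_0^\infty(\Sigma_{\beta'})$ for $\beta'$ slightly above $\beta$, and by additivity of the (extended) calculus
\[
z(A^q+z)^{-1}=s(A+s)^{-1}+h(A),\qquad\text{so that}\qquad\|z(A^q+z)^{-1}\|\le M(A)+\|h(A)\|
\]
in view of \eqref{part0}. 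This is exactly the source of the summand $M(A)$ in $\tilde M_{\alpha,q;\gamma}$. (The identification of $h(A)$ with this difference is a consistency point; if needed it is secured by first assuming $A$ invertible, via the approximation $A\rightsquigarrow A+\varepsilon$ and $\varepsilon\downarrow0$, and then using strong resolvent convergence.)

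It then remains to estimate $\|h(A)\|\le\frac{M(A,\beta)}{2\pi}\int_{\partial\Sigma_\beta}\frac{|h(\lambda)|}{|\lambda|}\,|d\lambda|$, where the integrand is $\frac{|z-s\lambda^{q-1}|}{|\lambda^q+z|\,|\lambda+s|}$. The decisive device is to choose $s=|z|^{1/q}$ and substitute $\lambda=|z|^{1/q}\tau e^{\pm i\beta}$: all powers of $|z|$ cancel and the integral becomes a purely numerical one in $\tau$, depending only on $\beta$, $q$ and $\arg z$. The two denominators are governed by the elementary angular inequality \eqref{number}: the factor $|\lambda+s|\ge\cos(\beta/2)(|\lambda|+s)$ yields $\cos(\beta/2)$, while the factor from $|\lambda^q+z|$ yields $\cos(q\beta/2)$.

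I expect the genuinely delicate point to be this last estimate. The crudest approach — the triangle inequality in the numerator together with the worst-case angle $q\beta+\gamma$ in $|\lambda^q+z|$, which only gives $\cos\bigl((q\beta+\gamma)/2\bigr)$ — loses a bounded factor that degenerates as $q\to1^+$ (the residual scalar integral $2\int_0^\infty\frac{d\tau}{(1+\tau)(1+\tau^q)}$ tends to $1$), and so cannot reach the stated $\cos(q\beta/2)$. The plan is therefore to keep the numerator $|z-s\lambda^{q-1}|$ paired with the matching denominator factor, to exploit the $\tau\mapsto1/\tau$ symmetry of the scale-invariant integrand, and to use the explicit value $\int_0^\infty|t e^{i\phi}+s|^{-2}\,dt=\phi/(s\sin\phi)$ (as in the proof of Proposition \ref{PrBBL1}) to carry out the remaining computation and collect the constant $\frac{2}{\pi\cos(\beta/2)\cos(q\beta/2)}$. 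This sharp bookkeeping of the scalar integral, rather than any conceptual step, is where the main work lies.
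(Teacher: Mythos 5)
Your proposal is correct and follows essentially the same route as the paper: there one writes $z(A^q+z)^{-1}=|z|^{1/q}(|z|^{1/q}+A)^{-1}+f_{z,q}(A)$ with $f_{z,q}(\lambda)=\frac{z}{z+\lambda^q}-\frac{|z|^{1/q}}{\lambda+|z|^{1/q}}$, bounds the first term by $M(A)$ via \eqref{part0}, and estimates $f_{z,q}(A)$ by the Cauchy integral over $\partial\Sigma_\beta$ with $\beta=\beta_{\alpha,q;\gamma}$, the triangle inequality in the numerator, the angular inequality \eqref{number1} in the denominators, and the scalar bound $\int_0^\infty\frac{(1+t^{q-1})\,dt}{(t^q+1)(t+1)}\le 2$. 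The one place you diverge --- insisting on refining the ``crude'' estimate so as to reach $\cos(q\beta/2)$ rather than $\cos((q\beta+\gamma)/2)$ --- is chasing what appears to be a slip in the displayed constant \eqref{Mn}: the paper's own proof performs exactly the crude estimate you describe and arrives at the constant containing $\cos((q\beta_{\alpha,q;\gamma}+\gamma)/2)$, so the delicate refinement you leave unexecuted is not actually needed for the argument.
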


\begin{proof}
Let $\gamma \in (0,\pi-q\alpha)$ be fixed.
Since $f_{s,q} \in H_{0}^{\infty}(\Sigma_\pi),$ using the holomorphic functional calculus and \cite[Proposition 3.1.2]{Haa2006}, we have
\[
z (A^q+z)^{-1}=|z|^{1/q}(|z|^{1/q}+A)^{-1}+f_{z,q}(A),\qquad z\in
\Sigma_{\pi-q\alpha},
\]
where
\[
f_{z,q}(\lambda):=\frac{z}{z+\lambda^q}-\frac{|z|^{1/q}}{\lambda+|z|^{1/q}}=
\frac{z\lambda -z^{1/q}\lambda^q}{(z+\lambda^q)(\lambda+|z|^{1/q})}.
\]
Hence,
\[
\|z (A^q+z)^{-1}\|\le M(A)+\|f_{z,q}(A)\|,\quad
z \in\Sigma_{\pi-q\alpha},
\]

Furthermore, if $\beta\in (\alpha,\pi/q)$ and $z \in \Sigma_\gamma,$  then
\begin{eqnarray*}
\|f_{z,q}(A)\|&\le& \frac{1}{2\pi}\int_{\partial S_{\beta}}
|f_{z,q}(\lambda)|\|(\lambda-A)^{-1}\|\,|d\lambda|\\
&\le& \frac{M(A,\beta)}{2\pi}\int_{\partial S_{\beta}}
|f_{z,q}(\lambda)|\frac{|d\lambda|}{|\lambda|}, \quad \beta\in (\alpha,\pi/q).
\end{eqnarray*}
Moreover, if,  in addition, $q\in (1,\pi/\alpha)$ and $z \in \Sigma_\gamma,$ then setting
$C=\cos(\beta/2)\cos((q\beta+\gamma)/2)$ and  using
(\ref{number1}), we have
\begin{eqnarray*}
\int_{\partial S_{\beta}} |f_{z,q}(\lambda)|\frac{|d\lambda|}{|\lambda|} &\le&
\int_{\partial S_{\beta}}
\frac{|z|+|z|^{1/q}|\lambda|^{q-1}}{|z+\lambda^q|
|\lambda+|z|^{1/q}|} |d\lambda|\\
&\le C&\int_{\partial S_{\beta}}
\frac{|z|+|z|^{1/q}|\lambda|^{q-1}}{(|z|+|\lambda|^q|)
(|\lambda|+|z|^{1/q})} |d\lambda|
\\
&=&2C\int_0^\infty\frac{1+t^{q-1}}{(t^q+1) (t+1)}\,dt.
\end{eqnarray*}
Since $q \ge 1,$
\[
\int_0^\infty \frac{(1+t^{q-1})\,dt}{(t^q+1)(t+1)}\le
2\int_0^\infty \frac{dt}{(t+1)^2}=2,
\]
thus
\[
\|z (A^q+z)^{-1}\|\le M(A)+\frac{2
M(A,\beta)}{\pi\cos(\beta/2)\cos ((q\beta+\gamma)/2)}, \qquad
z \in\Sigma_\gamma.
\]
Putting  $\beta=\beta_{\alpha,q;\gamma},$ we obtain (\ref{MEs0}) and  (\ref{Mn}).
\end{proof}

In \cite[Theorem 6.1 and Remark 6.2]{Laub} it was proved that if
$\psi\in\mathcal{CBF}$ then
\begin{equation}\label{BLimp}
A\in \Sect(\theta) \Longrightarrow \psi(A)\in \Sect(\theta),\qquad
\theta\in [0,\pi/2).
\end{equation}
The proof of \eqref{BLimp} there
was based on the
fact that
\begin{equation}\label{Stat}
\psi\in\mathcal{CBF} \Longrightarrow [\psi(\lambda^\alpha)]^{1/\alpha}\in \mathcal{CBF}, \qquad
\alpha\in (0,1),
\end{equation}
and on Theorem \ref{frpowH}. (See also \cite[Corollary 2]{Nakamura} and
\cite[Corollary 7.15] {SchilSonVon2010}.) We
present below a slight generalization of (\ref{BLimp}) extending
it to the whole class of sectorial operators. Once again, apart
from proving the sectoriality of $\psi (A)$, we give explicit
constants in the resolvent bounds for $\psi(A)$ and get a control
over the sectoriality angle of $\psi(A).$ This will be used in
subsequent sections.

\begin{thm}\label{LB}
Let $A\in \Sect(\alpha)$, $\alpha\in (0,\pi)$. If $\psi\in
\mathcal{CBF},$ then $\psi(A)\in \Sect(\alpha)$ too. Moreover, if
$q \in (1,\pi/\alpha)$  and $\gamma \in (0,(1-q^{-1})\pi)$,
then
\begin{equation}\label{Aest}
\|z(\psi(A)+z)^{-1}\| \le \frac{\tilde{M}_{\alpha,q;\gamma}(A)\sin(\pi/q)}{\pi/q}
\frac{\pi/q +\gamma}{\sin(\pi/q+\gamma)}, \quad z\in
\Sigma_\gamma,
\end{equation}
where
 $\tilde{M}_{\alpha,q;\gamma}(A)$ is given by
(\ref{Mn}).
\end{thm}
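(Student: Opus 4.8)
The plan is to factor $\psi(A)$ as a fractional power of a complete Bernstein function of the power $A^q$, so that the two explicit resolvent bounds already established, Propositions \ref{PrBBL10} and \ref{PrBBL1}, can simply be chained. Writing $r=1/q\in(0,1)$, I would set
\[
g(\lambda):=\psi(\lambda^{1/q}),\qquad \Psi(\lambda):=[\psi(\lambda^{1/q})]^{q}=g(\lambda)^{q}.
\]
Since $\lambda^{1/q}\in\mathcal{CBF}$ and $\mathcal{CBF}$ is stable under composition, $g\in\mathcal{CBF}$; and by \eqref{Stat}, applied with exponent $1/q\in(0,1)$, also $\Psi\in\mathcal{CBF}$. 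At the level of scalar functions one checks at once that $[\Psi(\lambda^{q})]^{1/q}=\psi(\lambda)$, and the first task is to lift this identity to operators.

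First I would note that $q\alpha<\pi$ forces $A^{q}\in\Sect(q\alpha)$ by Proposition \ref{frpow}, so that $\Psi(A^{q})$ is well defined through the Hirsch calculus and, being a complete Bernstein function of a sectorial operator, again lies in $\Sect(q\alpha)$ (cf. \eqref{BLimp}). To identify $[\Psi(A^{q})]^{1/q}$ with $\psi(A)$ I would invoke the power rule \eqref{AB2} of Theorem \ref{frpowH}: with $g\in\mathcal{CBF}$, $\Psi=g^{q}\in\mathcal{CBF}$ and $\beta=q>1$ this gives $[\Psi(A^{q})]^{1/q}=g(A^{q})$; then the composition rule \eqref{AB1}, with $f=\psi$ and exponent $1/q$, together with the law of exponents $(A^{q})^{1/q}=A$, yields $g(A^{q})=\psi((A^{q})^{1/q})=\psi(A)$. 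Thus $\psi(A)=[\Psi(A^{q})]^{1/q}$, and since $\Psi(A^{q})\in\Sect(q\alpha)$, Proposition \ref{frpow} gives $\psi(A)\in\Sect(q\alpha\cdot\tfrac1q)=\Sect(\alpha)$, the first assertion.

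With the factorisation in hand, \eqref{Aest} follows by feeding the two propositions into one another. By the Hirsch sectoriality estimate \eqref{frpow1} applied to $A^{q}$ and $\Psi$,
\[
\sup_{s>0}\|s(\Psi(A^{q})+s)^{-1}\|\le \sup_{s>0}\|s(A^{q}+s)^{-1}\|=M(A^{q}),
\]
and Proposition \ref{PrBBL10}, read on the positive axis $z=s$, bounds $M(A^{q})$ by $\tilde{M}_{\alpha,q;\gamma}(A)$ of \eqref{Mn}. Feeding this number as the constant $M$ into Proposition \ref{PrBBL1}, applied to the sectorial operator $\Psi(A^{q})$ with exponent $r=1/q$ (so that $\pi r=\pi/q$ and the admissible range $\gamma\in(0,(1-r)\pi)=(0,(1-q^{-1})\pi)$ is exactly the one in the statement), yields
\[
\|z(\psi(A)+z)^{-1}\|=\|z([\Psi(A^{q})]^{1/q}+z)^{-1}\|\le \frac{\tilde{M}_{\alpha,q;\gamma}(A)\sin(\pi/q)}{\pi/q}\,\frac{\pi/q+\gamma}{\sin(\pi/q+\gamma)},\qquad z\in\Sigma_\gamma,
\]
which is precisely \eqref{Aest}.

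The delicate point, and the step I would treat most carefully, is the operator identity $\psi(A)=[\Psi(A^{q})]^{1/q}$: one must verify that the composition and power rules of the Hirsch calculus (Theorem \ref{frpowH}) and the law of exponents are indeed available for the merely sectorial, possibly non-injective $A$ allowed here, and that every function entering the argument genuinely belongs to $\mathcal{CBF}$. A secondary bookkeeping issue is the compatibility of the parameter ranges: Proposition \ref{PrBBL10} requires $\gamma<\pi-q\alpha$ for $\tilde{M}_{\alpha,q;\gamma}(A)$ to be a legitimate bound for $M(A^{q})$ (equivalently $\beta_{\alpha,q;\gamma}\in(\alpha,\pi/q)$), so one should record the constant in \eqref{Aest} within that regime; the transfer of the $M$-constant through \eqref{frpow1} is otherwise lossless, and it is this transfer that makes the final constant depend only on $A$ through $\tilde{M}_{\alpha,q;\gamma}(A)$.
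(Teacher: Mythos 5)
Your proposal is correct and follows essentially the same route as the paper: your auxiliary function $\Psi(\lambda)=[\psi(\lambda^{1/q})]^{q}$ is exactly the paper's $\varphi=g_q\circ\psi\circ g_{1/q}$, the identity $\psi(A)=[\Psi(A^{q})]^{1/q}$ is obtained from the same power rule \eqref{AB2} and composition rule \eqref{AB1}, and the estimate \eqref{Aest} is derived by the same chaining of Proposition \ref{PrBBL1} (with $r=1/q$), Theorem \ref{frpowH}, (i), and Proposition \ref{PrBBL10}. The one small deviation is your appeal to \eqref{BLimp} for $\Psi(A^{q})\in \Sect(q\alpha)$: as cited, \eqref{BLimp} covers only angles below $\pi/2$ while $q\alpha$ may exceed it (its extension to all sectorial operators is precisely what Theorem \ref{LB} provides), so the paper instead concludes $\psi(A)\in\Sect(\pi/q)$ for each admissible $q$ and lets $q\uparrow\pi/\alpha$ --- but this is harmless in your write-up, since your resolvent estimate uses only the bare sectoriality of $\Psi(A^{q})$ from Theorem \ref{frpowH}, (i), and already yields $\psi(A)\in\Sect(\alpha)$ by the same limiting argument in $q$ and $\gamma$.
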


\begin{proof}
In this proof, we will combine the (extended) holomorphic functional calculus
and the Hirsch functional calculus. This is possible due to compatibility of the calculi given by Proposition
\ref{Sovp2}.

Recall first that by  Theorem \ref{frpowH}, (i), the operator $\psi(A)$ is sectorial.
Choose $q\in (1,\pi/\alpha)$ so that  $1/q\in (0,1),$ and define
$g_{1/q}(z):=z^{1/q}$ and $\eta_q(z):=z^q$. Since $g_q\circ \psi \circ
g_{1/q}\in \mathcal{CBF}$ (see  (\ref{Stat})) and $A^q$ is
sectorial in view of Proposition \ref{frpow}, we infer by
(\ref{AB2}) that
\[
[(g_q\circ \psi \circ g_{1/q})(A^q)]^{1/q}=[\psi\circ
g_{1/q}](A^q),
\]
and, moreover, by (\ref{AB1}),
\[
[\psi\circ g_{1/q}](A^q)=\psi((A^q)^{1/q})=\psi(A).
\]
Hence
\begin{equation}\label{comp}
 \varphi:=g_q\circ \psi \circ g_{1/q}\in \mathcal{CBF} \qquad \text{and} \qquad \psi(A)=[\varphi(A^q)]^{1/q},
\end{equation}
and, by Proposition \ref{frpow}, we obtain that
\begin{equation}\label{Sect11}
\psi(A)\in \Sect(\pi/q).
\end{equation}
As $q\in (1,\pi/\alpha)$ is arbitrary, choosing
 $q$ closely enough to $\pi/\alpha$
we can make $\gamma$ arbitrarily close to $\pi-\alpha.$
Thus,  from (\ref{Sect11}) it follows that $\psi(A)\in \Sect(\alpha)$.

Let now
$\gamma \in (0,(1-q^{-1})\pi)$
and $z\in \Sigma_\gamma.$  Using (\ref{comp}),
Proposition \ref{PrBBL1} with $r=1/q,$ Theorem \ref{frpowH}, (i),
and Proposition \ref{PrBBL10}, we conclude that
\begin{eqnarray*}
\|z(\psi(A)+z)^{-1}\|&=&\|z([\varphi(A^q)]^{1/q}+z)^{-1}\|\\
&\le&\frac{\sin(\pi/q)}{\pi/q} \frac{(\pi/q+\gamma)}{\sin(\pi/q
+\gamma)} \sup_{s>0}\,\|s(\varphi(A^q)+s)^{-1}\|
\\
&\le& \frac{\sin(\pi/q)}{\pi/q} \frac{(\pi/q +\gamma)}{\sin(\pi/q
+\gamma)}\, \sup_{s>0}\,\|s(A^q+s)^{-1}\|
\\
&\le& \tilde{M}_{\alpha,q;\gamma}(A)\frac{\sin(\pi/q)}{\pi/q} \frac{\pi/q+\gamma}{\sin(\pi/q+\gamma)},
\end{eqnarray*}
where $\tilde{M}_{\alpha,q;\gamma}(A)$ is defined by (\ref{Mn}).

Once again, since the choice of  $q\in (1,\pi/\alpha)$ and $\gamma
\in (0,(1-q^{-1})\pi)$ is arbitrary,
it follows that
 (\ref{Aest}) holds for $z\in \Sigma_{\pi-\alpha'}$ for any  $\alpha'\in (\alpha,\pi)$.
\end{proof}

\section{Main results: holomorphicity and preservation of angle}\label{Main1}

We start with recalling some basics on holomorphic semigroups.

A $C_0$-semigroup $(e^{-tA})_{t \ge 0}$ is said to be a
holomorphic  semigroup of angle $\theta$ if $e^{-\cdot A}$ extends holomorphically
to a sector $\Sigma_\theta $ for some $\theta \in
(0,\frac{\pi}{2}].$ In this case, we write $-A \in \mathcal H(\theta).$
If this extension is bounded in $\Sigma_{\theta'}$ for every $\theta' \in (0,\theta)$
then we say that
 $(e^{-tA})_{t \ge 0}$ is a sectorially bounded holomorphic  semigroup of
angle $\theta$ and write
$-A\in \mathcal{BH}(\theta)$.
Note that  $(e^{-tA})_{t>0}$ may admit a holomorphic extension to
$\Sigma_\theta$ as above without being sectorially bounded (as
already one-dimensional examples show).
Recall that if $\omega\in [0,\pi/2)$ then
 $A\in \Sect(\omega)$ if and only if
$-A\in \mathcal{BH}(\pi/2-\omega)$, see e.g. \cite[Theorem 4.6]{EngNag}.

Berg, Boyadzhiev and de Laubenfels  proved in \cite[Propositions 7.1 and 7.4]{Laub} that if
$-A\in \mathcal{BH}(\theta)$ and $\theta \in (\pi/4,\pi/2]$, then for
any $\psi\in \mathcal{BF}$ the operator $-\psi(A)$ generates a
sectorially bounded holomorphic $C_0$-semigroup, and if  $-A\in
\mathcal{BH}(\pi/2)$, then $-\psi(A)\in \mathcal{BH}(\pi/2)$ too.
They also asked in \cite{Laub} whether the statement holds for $\theta$ from the whole of the interval $(0,\pi/2].$
In Theorem \ref{Tangle} below, we remove the restriction on
$\theta$ and prove the result in full generality thus solving a problem posed in \cite{Laub}. Moreover, we show that $\psi$ the holomorphy angle of
of $(e^{-tA})_{t \ge 0}$ invariant. As byproduct,  in Corollary \ref{Tangle1}, we
also answer the question by Kishimoto-Robinson from \cite{Rob} mentioned in Introduction.
To this aim, we will first need to prove several results on functional calculi allowing one to apply the estimate \eqref{need} proved in  Theorem \ref{BTR}.

Let $-A\in \mathcal{BH}(\theta)$ for some $\theta\in (0,\pi/2]$ so
that for every $\omega\in (0,\pi/2+\theta),$
\begin{equation}\label{sA}
\|(z+A)^{-1}\|\le \frac{M(A,\omega)}{|z|},\qquad z\in
\Sigma_{\omega}.
\end{equation}
The following assumption will be crucial:\\
\emph{ For the whole of this section, let
$\omega\in(\pi/2,\pi/2+\theta)$
be fixed. Let also $\psi$ be a Bernstein function, $\varphi$ be the
complete Bernstein function associated to $\psi,$ and the
function $r$ be given by \eqref{defr}.}

Define $r(A,\cdot):\Sigma_\omega \to \mathcal L(X)$ and $F(A,\cdot):\Sigma_\omega \to \mathcal L(X)$
by
\begin{equation}\label{RRR}
r(A;z):=\frac{1}{2\pi i} \int_{\partial
\Sigma_\beta}\,r(\lambda;z) (\lambda-A)^{-1}\,d\lambda,
\end{equation}
and
\begin{equation}\label{RRF}
F(A;z):=\frac{1}{2\pi i} \int_{\partial\Sigma_\beta} \frac{\lambda
r(\lambda;z)}{(\lambda+1)^2}(\lambda-A)^{-1}\,d\lambda,
\end{equation}
where  $\beta\in (\pi/2-\theta, \pi-\omega)$ is arbitrary and $\Sigma_\beta$ is oriented counterclockwise.
In view of \eqref{need} and Cauchy's theorem, the functions $r$ and $F$ are well-defined.

We start with providing sectoriality estimates for $r$ in appropriate sectors an expressing $F$ via $r.$
\begin{prop}\label{Tr}
Let $-A\in \mathcal{BH}(\theta)$ for some $\theta\in (0,\pi/2]$ so
that (\ref{sA}) holds. Then for every
$\omega\in(\pi/2,\pi/2+\theta),$
$r(A,\cdot)$ is holomorphic in $\Sigma_\omega$ and
for every $\beta\in (\pi/2-\theta,\pi-\omega),$
\begin{equation}\label{sEs}
\|r(A;z)\|\le \frac{4 M(A,\pi-\beta)}{\pi
\cos^2\beta\,\cos^2((\gamma+\beta)/2)\,|z|},\quad z\in \Sigma_\omega.
\end{equation}
In particular, if  $\delta:=\pi/2+\theta-\omega,$ then
\begin{equation}\label{SMS}
\|r(A;z)\|\le  \frac{4 M(A,\pi/2+\theta-\delta/2)}{\pi
\sin^2(\theta/2)\,\sin^2(\delta/4)\,|z|},\qquad z\in
\Sigma_\omega.
\end{equation}
\end{prop}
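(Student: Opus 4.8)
The plan is to estimate the operator-valued integral \eqref{RRR} by feeding the scalar bound \eqref{need} of Theorem \ref{BTR} into the holomorphy resolvent bound \eqref{sA}. Throughout I fix $\omega\in(\pi/2,\pi/2+\theta)$ and $\beta\in(\pi/2-\theta,\pi-\omega)$; this range is nonempty precisely because $\omega<\pi/2+\theta$, and since $\theta\le\pi/2$ one has $\beta\in(0,\pi-\omega)$, so that \eqref{need} is available on $\partial\Sigma_\beta$. The one genuinely structural step is the control of the resolvent along this contour. Writing $\lambda=te^{\pm i\beta}$ ($t>0$) on $\partial\Sigma_\beta$, one has $(\lambda-A)^{-1}=-((-\lambda)+A)^{-1}$ with $-\lambda$ of argument $\pm(\pi-\beta)$; since $\beta>\pi/2-\theta$ forces $\pi-\beta<\pi/2+\theta$, the point $-\lambda$ lies in $\overline{\Sigma}_{\pi-\beta}$ where \eqref{sA} applies, whence
\[
\|(\lambda-A)^{-1}\|\le \frac{M(A,\pi-\beta)}{|\lambda|},\qquad \lambda\in\partial\Sigma_\beta.
\]
This is the only place where the hypothesis $-A\in\mathcal{BH}(\theta)$ enters, and it is exactly the matching of the admissible range of $\beta$ to the holomorphy angle $\theta$ that makes it work.

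Granting this, the norm estimate \eqref{sEs} is immediate: for $z\in\Sigma_\omega$,
\[
\|r(A;z)\|\le \frac{1}{2\pi}\int_{\partial\Sigma_\beta}|r(\lambda;z)|\,\|(\lambda-A)^{-1}\|\,|d\lambda|\le \frac{M(A,\pi-\beta)}{2\pi}\int_{\partial\Sigma_\beta}|r(\lambda;z)|\,\frac{|d\lambda|}{|\lambda|},
\]
and \eqref{need} bounds the last integral by $8\bigl(\cos^2\beta\,\cos^2((\omega+\beta)/2)\,|z|\bigr)^{-1}$, which is the constant displayed in \eqref{sEs}. In particular the integral \eqref{RRR} converges absolutely and defines $r(A;z)\in\mathcal L(X)$; independence of the choice of $\beta$ follows from Cauchy's theorem applied to the $\lambda$-holomorphic integrand $r(\cdot;z)(\cdot-A)^{-1}$ on $\Sigma_{\pi-\omega}$, the decay on the contour being furnished by \eqref{need}.

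For holomorphy in $z$ I would argue by Morera's theorem. For each fixed $\lambda\in\partial\Sigma_\beta$ the map $z\mapsto r(\lambda;z)$ is holomorphic on $\Sigma_\omega$, since by Proposition \ref{P1}(i) the quantities $z+\psi(\lambda)$ and $z+\varphi(\lambda)$ stay bounded away from $0$ for $z\in\Sigma_\omega$ and $\lambda\in\Sigma_{\pi-\omega}$. Given a closed triangle $T\subset\Sigma_\omega$, on $\partial T$ one has $|z|\ge c_T>0$, so the resolvent bound together with \eqref{need} makes the double integral over $\partial T\times\partial\Sigma_\beta$ absolutely convergent; Fubini then permits interchanging $\oint_{\partial T}$ with $\int_{\partial\Sigma_\beta}$, and the inner integral $\oint_{\partial T}r(\lambda;z)\,dz$ vanishes by Cauchy's theorem. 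Hence $\oint_{\partial T}r(A;z)\,dz=0$, and $r(A;\cdot)$ is holomorphic in $\Sigma_\omega$.

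Finally, \eqref{SMS} follows from \eqref{sEs} by optimizing $\beta$. With $\delta=\pi/2+\theta-\omega\in(0,\theta)$, the interval $(\pi/2-\theta,\pi-\omega)$ has midpoint $\beta=\pi/2-\theta+\delta/2$. For this choice $\pi-\beta=\pi/2+\theta-\delta/2$, while $(\omega+\beta)/2=\pi/2-\delta/4$ gives the exact identity $\cos((\omega+\beta)/2)=\sin(\delta/4)$; and $\delta<\theta$ forces $\beta<\pi/2-\theta/2$, so $\cos\beta>\sin(\theta/2)$. Substituting these into \eqref{sEs} yields \eqref{SMS}. As for the main obstacle: once \eqref{need} is in hand there is no deep difficulty, the genuine content being the contour resolvent bound and the correct matching of $\beta$ to $\theta$ and $\omega$, with holomorphy and $\beta$-independence reducing to routine Morera/Fubini/Cauchy bookkeeping legitimized by the absolute convergence that \eqref{need} supplies. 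The sole point needing care is that \eqref{sA} is stated for the open sector, so I would either enlarge the angle slightly (permissible since $\pi-\beta<\pi/2+\theta$ strictly) or pass to the boundary $\partial\Sigma_{\pi-\beta}$ by continuity to justify the constant $M(A,\pi-\beta)$.
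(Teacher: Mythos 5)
Your proof is correct and follows essentially the same route as the paper: bound the contour integral \eqref{RRR} by combining the resolvent estimate \eqref{sA} along $\partial\Sigma_\beta$ with the scalar integral bound \eqref{need}, obtain \eqref{SMS} by the choice $\beta=\pi/2-\theta+\delta/2$ (using $\cos\beta=\sin(\theta-\delta/2)\ge\sin(\theta/2)$ and $\cos((\omega+\beta)/2)=\sin(\delta/4)$), and get holomorphy in $z$ from Fubini and Morera. The paper's proof is just a terser version of the same argument, and your closing remark about passing from the open sector in \eqref{sA} to its boundary is a legitimate (minor) point the paper glosses over.
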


\begin{proof}
The estimate (\ref{sEs}) follows from (\ref{RRR}), (\ref{need})
and (\ref{sA}).
Setting
\[
\delta=\pi/2+\theta-\omega,\quad \beta =\pi/2-\theta+\delta/2,
\]
in (\ref{sEs}), we obtain  (\ref{SMS}).

The holomorphicity of $r(A, \cdot)$ in $\Sigma_\gamma$ is a direct consequence of Fubini's and Morera's theorems.
\end{proof}

\begin{lemma}\label{Resolvent}
Let $r(A;z)$ and $F(A;z)$ be defined by (\ref{RRR}) and
(\ref{RRF}), respectively. Then
\begin{equation}\label{SA}
F(A;z)=A(A+1)^{-2}r(A;z),\qquad z\in \Sigma_\omega.
\end{equation}
\end{lemma}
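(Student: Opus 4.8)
The plan is to recognise $F(A;z)$ as the operator obtained from $r(A;z)$ by inserting the regularising factor $\tau(\lambda)=\lambda/(\lambda+1)^2$ (see \eqref{tau}) into the integrand of \eqref{RRR}, and to show that multiplication by the bounded operator $\tau(A)=A(A+1)^{-2}$ from the outside has exactly that effect. Since $\tau(A)\in\mathcal{L}(X)$ and the integral \eqref{RRR} converges absolutely --- this is the same bound $\int_{\partial\Sigma_\beta}|r(\lambda;z)|\,|\lambda|^{-1}|d\lambda|<\infty$ of \eqref{need} combined with the resolvent estimate $\|(\lambda-A)^{-1}\|\le M(A,\pi-\beta)/|\lambda|$ coming from \eqref{sA} that was used in Proposition \ref{Tr} --- I would first bring $\tau(A)$ under the integral sign:
\[
A(A+1)^{-2}r(A;z)=\frac{1}{2\pi i}\int_{\partial\Sigma_\beta} r(\lambda;z)\,A(A+1)^{-2}(\lambda-A)^{-1}\,d\lambda .
\]

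The key computation is the resolvent (partial-fraction) identity
\[
A(A+1)^{-2}(\lambda-A)^{-1}=\frac{\lambda}{(\lambda+1)^2}(\lambda-A)^{-1}+\frac{\lambda}{(\lambda+1)^2}(A+1)^{-1}-\frac{1}{\lambda+1}(A+1)^{-2},
\]
valid for every $\lambda\in\partial\Sigma_\beta$, where all four operators are bounded because $-1$ and $\lambda$ lie in $\rho(A)$. I would establish it either by observing that both sides are the images of the scalar partial-fraction decomposition of $\mu/[(\mu+1)^2(\lambda-\mu)]$ under the (bounded) resolvent calculus, or, more elementarily, by computing $[A(A+1)^{-2}-\tau(\lambda)](\lambda-A)^{-1}$ directly: the numerator $A(\lambda+1)^2-\lambda(A+1)^2$ factors as $(A-\lambda)(1-\lambda A)$, which cancels the resolvent and leaves $(\lambda A-1)/[(\lambda+1)^2(A+1)^2]$, precisely the sum of the last two terms.

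Substituting this identity and recalling \eqref{RRF}, the first term reproduces $F(A;z)$, while the remaining two terms, after pulling the constant operators $(A+1)^{-1}$ and $(A+1)^{-2}$ out of the integral, contribute
\[
(A+1)^{-1}\,\frac{1}{2\pi i}\int_{\partial\Sigma_\beta}\frac{\lambda\,r(\lambda;z)}{(\lambda+1)^2}\,d\lambda\;-\;(A+1)^{-2}\,\frac{1}{2\pi i}\int_{\partial\Sigma_\beta}\frac{r(\lambda;z)}{\lambda+1}\,d\lambda .
\]
The last step is to see that both scalar integrals vanish by \eqref{SS2}: the first is exactly the $k=1$ case, and the second vanishes once I write $\frac{1}{\lambda+1}=\frac{\lambda}{(\lambda+1)^2}+\frac{1}{(\lambda+1)^2}$, which splits it into the $k=1$ and $k=0$ cases. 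Hence only the $F(A;z)$ term survives, and since $z\in\Sigma_\omega$ was arbitrary and the manipulation is pointwise in $z$, the identity \eqref{SA} follows for all $z\in\Sigma_\omega$.

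I expect the only genuine point requiring care to be the bookkeeping that rewrites $\frac{1}{\lambda+1}$ as a combination of the two moments controlled by \eqref{SS2}; the interchange of $\tau(A)$ with the integral and the verification of the resolvent identity are routine, as every operator involved is bounded and no domain questions arise.
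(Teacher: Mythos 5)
Your proof is correct and follows essentially the same route as the paper: both arguments reduce the difference $F(A;z)-A(A+1)^{-2}r(A;z)$ to the factorization $A(\lambda+1)^2-\lambda(A+1)^2=(A-\lambda)(1-\lambda A)$, which cancels the resolvent, and then kill the leftover terms using the vanishing moments \eqref{SS2} for $k=0,1$. The only cosmetic difference is that you phrase the cancellation as a partial-fraction identity and must split $1/(\lambda+1)$ into the two controlled moments, whereas the paper's arrangement produces the $k=0$ and $k=1$ integrals directly.
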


\begin{proof}
Note that
\begin{eqnarray*}
\frac{\lambda}{(\lambda+1)^2}-A(A+1)^{-2}&=&
[\lambda(A+1)^2-(\lambda+1)^2A]\frac{(A+1)^{-2}}{(\lambda+1)^2}
\\
&=&(\lambda A-1)(A-\lambda)\frac{(A+1)^{-2}}{(\lambda+1)^2}, \qquad \lambda \in \mathbb C\setminus (-\infty,0).
\end{eqnarray*}
Therefore, by (\ref{SS2}), for every  $x\in X$ one has
\begin{eqnarray*}
&&F(A;z)x-A(A+1)^{-2}r(A;z)x\\
 &=&\frac{1}{2\pi i}
\int_{\partial\Sigma_\beta} r(\lambda;z)\left[
\frac{\lambda}{(1+\lambda)^2} -A(A+1)^{-2}\right]
(\lambda-A)^{-1}x\,d\lambda
\\
&=&-\frac{1}{2\pi i} \int_{\partial\Sigma_\beta}
\frac{r(\lambda;z)}{(\lambda+1)^2} (\lambda A-1)(A+1)^{-2}x
\,d\lambda
\\
&=&\frac{1}{2\pi i} \int_{\partial\Sigma_\beta}
\frac{r(\lambda;z)}{(\lambda+1)^2} (A+1)^{-2}x \,d\lambda\\
&-&\frac{1}{2\pi i} \int_{\partial\Sigma_\beta} \frac{\lambda
r(\lambda;z)}{(\lambda+1)^2} A (A+1)^{-2}x
\,d\lambda\\
&=&0.
\end{eqnarray*}
\end{proof}

The following statement relating  the resolvents of $\psi(A)$ and
$\varphi(A)$ will be  basic for proving the main result of this
paper, Theorem \ref{Tangle}. It shows that the resolvents do not
differ much as far their behavior at infinity is concerned.

\begin{prop}\label{SC1}
Let $-A\in \mathcal{BH}(\theta)$ for some $\theta\in (0,\pi/2]$.
Then
\begin{equation}\label{SA1}
(z+\psi(A))^{-1}=(z+\varphi(A))^{-1}+r(A;z),\quad z\in
\Sigma_\omega.
\end{equation}
\end{prop}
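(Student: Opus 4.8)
The plan is to realize both resolvents through functional calculi, to reduce \eqref{SA1} to the factorization already proved in Lemma \ref{Resolvent}, and then to pass from the half-plane to the full sector by analytic continuation. Write $g_\psi(\lambda):=(z+\psi(\lambda))^{-1}$ and $g_\varphi(\lambda):=(z+\varphi(\lambda))^{-1}$. First I would treat $\re z>0$. By Theorem \ref{Bochner} one has $\widehat{\mu_t}(\lambda)=e^{-t\psi(\lambda)}$, so that
\begin{equation*}
g_\psi(\lambda)=\int_0^\infty e^{-tz}\,e^{-t\psi(\lambda)}\,dt=\widehat{\kappa_z}(\lambda),\qquad \kappa_z:=\int_0^\infty e^{-tz}\mu_t\,dt,
\end{equation*}
where $\|\kappa_z\|\le(\re z)^{-1}$ since the $\mu_t$ are subprobability measures; hence $g_\psi,g_\varphi\in\Wip(\C_{+})$. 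Feeding $e^{-t\psi(A)}=\int_0^\infty e^{-sA}\mu_t(ds)$ from Theorem \ref{BochP}, (ii), into the Laplace representation of the resolvent of $-\psi(A)$ and using Fubini, the Hille--Phillips calculus gives $g_\psi(A)=(z+\psi(A))^{-1}$, and likewise $g_\varphi(A)=(z+\varphi(A))^{-1}$; both resolvents exist for $\re z>0$ by \eqref{SupG}.

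Next I would identify $F(A;z)$ from \eqref{RRF}. Both $\tau g_\psi$ and $\tau g_\varphi$ lie in $H_0^\infty(\Sigma_{\pi-\omega})\cap\Wip(\C_{+})$, the factor $\tau$ supplying decay at $0$ and at $\infty$, so the Cauchy integral \eqref{RRF}, written as $\tfrac{1}{2\pi i}\int_{\partial\Sigma_\beta}\tau(\lambda)(g_\psi-g_\varphi)(\lambda)(\lambda-A)^{-1}\,d\lambda$, is computed by the holomorphic calculus and, by compatibility \cite[Proposition 3.3.2]{Haa2006}, agrees with its Hille--Phillips value. The product rule (Proposition \ref{funcal}) then yields
\begin{equation*}
F(A;z)=\tau(A)\bigl[(z+\psi(A))^{-1}-(z+\varphi(A))^{-1}\bigr],\qquad \re z>0.
\end{equation*}
Comparing with $F(A;z)=\tau(A)\,r(A;z)$ from Lemma \ref{Resolvent}, the operator $\tau(A)=A(1+A)^{-2}$ annihilates $(z+\psi(A))^{-1}-(z+\varphi(A))^{-1}-r(A;z)$. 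Since $\tau(A)$ is injective exactly when $A$ is, I would first assume $A$ injective and cancel $\tau(A)$ to obtain \eqref{SA1} for $\re z>0$. For general $A$ I would apply this to the injective operators $A+\epsilon$ and let $\epsilon\to0+$ at real $z=s>0$, using the strong resolvent convergence of Proposition \ref{limit}, (iii), for both $\psi$ and $\varphi$, together with the $\epsilon$-uniform bound of Proposition \ref{Tr} for $r(A+\epsilon;s)$ and dominated convergence; this gives \eqref{SA1} for every such $A$ on $(0,\infty)$.

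Finally I would continue analytically to $\Sigma_\omega$. The right-hand side $R(z):=(z+\varphi(A))^{-1}+r(A;z)$ is holomorphic on $\Sigma_\omega$: the first term because $\varphi(A)\in\Sect(\pi/2-\theta)$ by Theorem \ref{LB}, whence $\Sigma_\omega\subset\Sigma_{\pi/2+\theta}\subset\rho(-\varphi(A))$, and the second by Proposition \ref{Tr}. Fixing $s\in(0,\infty)$, the map $z\mapsto R(z)-R(s)-(s-z)R(z)R(s)$ is holomorphic on $\Sigma_\omega$ and vanishes on $(0,\infty)$, where $R$ coincides with the genuine resolvent $(\cdot+\psi(A))^{-1}$; by the identity theorem it vanishes on all of $\Sigma_\omega$, and a second application in the remaining variable shows that $R$ obeys the resolvent identity throughout $\Sigma_\omega$, i.e.\ $R$ is a pseudoresolvent. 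Because $R(s)=(s+\psi(A))^{-1}$ is injective and the kernel of a pseudoresolvent is independent of the parameter, every $R(z)$ is injective, so $R$ is the resolvent family of a single closed operator, which must be $\psi(A)$ since the two agree on $(0,\infty)$. Hence $\Sigma_\omega\subset\rho(-\psi(A))$ and $(z+\psi(A))^{-1}=R(z)$, which is \eqref{SA1}.

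The decisive difficulty is conceptual rather than computational: a general $\psi\in\mathcal{BF}$ is holomorphic only on $\C_{+}$, so the holomorphic calculus cannot be run for $\psi$ in the wide sector $\Sigma_\omega$, and $(z+\psi(A))^{-1}$ is not even a priori defined once $\re z\le0$. This is precisely what forces the detour through the associated complete Bernstein function $\varphi$ and the integrable remainder $r$, and why the resolvent of $\psi(A)$ on the enlarged sector is produced by the analytic-continuation/pseudoresolvent step rather than by a direct calculus computation; verifying that the Cauchy integral $F(A;z)$ really factors as $\tau(A)$ times the difference of the two resolvents, through the compatibility of the holomorphic and Hille--Phillips calculi on $\tau g_\psi$ and $\tau g_\varphi$, is the technical heart of the argument.
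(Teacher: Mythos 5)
Your proof is correct and follows essentially the same route as the paper's: establish the identity on a half-line/half-plane by factoring the difference of resolvents through the regulariser $\tau(A)$ and invoking Lemma \ref{Resolvent}, remove the injectivity assumption via the shift $A+\epsilon$ and Proposition \ref{limit}, and then extend to $\Sigma_\omega$ by analytic continuation. The only cosmetic differences are that you realize $(z+\psi(A))^{-1}$ through the Hille--Phillips calculus for $\re z>0$ (the paper works in the extended holomorphic calculus at real $s>0$ via \cite[Theorem 1.3.2, f)]{Haa2006}) and that you prove the continuation step by hand with a pseudoresolvent argument rather than citing \cite[Appendix B, Proposition B5]{ABHN01}.
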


\begin{proof}
Suppose first that $A$ has dense range. Then the operators $
(s+\psi)^{-1}(A) $ and $(s+\varphi)^{-1}(A)$ are
well-defined for $s>0$ via the (extended) holomorphic functional calculus
with the regulariser $\tau$ given  by (\ref{tau}). On the other
hand, since $-\psi(A)$ and $-\varphi(A)$ generate bounded $C_0$-semigroups, we
have that
\[
(s+\psi(A))^{-1}\in \mathcal{L}(X)  \quad \text{and} \quad (s + \varphi(A))^{-1} \in\mathcal{L}(X),\qquad s>0.
\]
Moreover, by \cite[Theorem 1.3.2, $f)$]{Haa2006}, if  $s>0$, then
\[
(s+\psi(A))^{-1}=(s+\psi)^{-1}(A) \quad \text{and} \quad
(s+\varphi(A))^{-1}=(s+\varphi)^{-1}(A).
\]
Hence, using sum rule for the (extended) holomorphic functional calculus,
\[
(s+\psi(A))^{-1}-(s+\varphi(A))^{-1}
=[(s+\psi)^{-1}-(s+\varphi)^{-1}](A).
\]
Furthermore, using the holomorphic functional calculus once again,
\begin{eqnarray*}
(s+\psi(A))^{-1}-(s+\varphi(A))^{-1}&=&[\tau(A)]^{-1}[((s+\psi)^{-1}-(s+\varphi)^{-1})\tau](A)\\
&=&[A(A+1)^{-2}]^{-1} [r(s;\cdot)\tau](A)\\
&=&[A(A+1)^{-2}]^{-1}F(A;s).
\end{eqnarray*}
 From this and (\ref{SA}) it follows that
\begin{equation}\label{SA12}
(s+\psi(A))^{-1}=(s+\varphi(A))^{-1}+r(A;s),\qquad s>0,
\end{equation}
that is \eqref{SA1} holds for $z >0.$

To obtain \eqref{SA12} in case when the range of $A$ may not be dense, we consider the
approximation of $A$ by the operators $A_\epsilon$ with dense range given by
\[
A_\epsilon:=A+\epsilon\in \mathcal{BH}(\theta),\quad \epsilon>0.
\]
By (\ref{SA12}) we have
\begin{equation}\label{SA11}
(s+\psi(A_\epsilon))^{-1}-(s+\varphi(A_\epsilon))^{-1}=r(A_\epsilon;s),\qquad
s>0,\quad \epsilon>0.
\end{equation}

Using Proposition \ref{Sovp1} now, we can
 apply \eqref{diffresolv} to the Bernstein functions $\psi$
and $\varphi.$ It follows that
\[
\lim_{\epsilon\to
0}\,[(s+\psi(A_\epsilon))^{-1}-(s+\varphi(A_\epsilon))^{-1}]
=(s+\psi(A))^{-1}-(s+\varphi(A))^{-1},
\]
in the uniform operator topology. On the other hand,  by
(\ref{FEH}),
\[
|\lambda+\epsilon|\ge \cos(\beta/2)\,(|\lambda|+\epsilon),\qquad
\lambda\in \partial\Sigma_\beta,\qquad \epsilon>0.
\]
Therefore, if $\lambda\in
\partial\Sigma_\beta$ then
\[
\|(A-\lambda)^{-1}-(A-\lambda-\epsilon)^{-1}\|\le \epsilon
\frac{M^2(\pi-\beta, A)}{|\lambda(\lambda+\epsilon)|} \le
\frac{\epsilon M^2(\pi-\beta, A)}{\cos
(\beta/2)\,|\lambda|(|\lambda|+\epsilon)}.
\]
So, by (\ref{RRR}),  (\ref{need}) and the bounded convergence
theorem, we obtain that
\begin{eqnarray*}
\|r(A;s)-r(A_\epsilon;s)\| &\le& \frac{1}{2\pi} \int_{\partial
\Sigma_\beta}\,r(\lambda;z)
\|(A-\lambda)^{-1}-(A-\lambda-\epsilon)^{-1}\|\,|d\lambda|\\
&\le& \frac{\epsilon M^2(\pi-\beta, A)}{2\pi} \int_{\partial
\Sigma_\beta}\,\frac{|r(\lambda;s)|}{|\lambda|(|\lambda|+\epsilon)}
\,|d\lambda|\\
&\to& 0, \qquad \epsilon \to 0.
\end{eqnarray*}
Letting $\epsilon\to 0$ in (\ref{SA11}), (\ref{SA12}) follows.

Thus, $(\cdot + \psi(A))^{-1}$ satisfies \eqref{SA12} and extends holomorphically to $\Sigma_\omega$
as both $r(\cdot, A)$ and $(\cdot+\varphi(A))^{-1}$ have the latter property by Proposition \ref{Tr}
and Theorem
\ref{LB}, respectively.
Then, \cite[Appendix B, Proposition B5]{ABHN01} implies that $\Sigma_\omega \subset \rho (-\psi(A))$ and
the extension is given by   $(\cdot + \psi(A))^{-1}.$ This  yields (\ref{SA1}) for all $z \in \Sigma_\omega.$
\end{proof}

Now we are ready to prove the main results of this paper.
 under Bernstein functions.
The first statement of them shows that Bernstein functions leave the class of generators of sectorially bounded holomorpgic semigroups on a Banach space invariant and, moreover, preserve the holomorphy sectors.

\begin{thm}\label{Tangle}
Let $-A\in \mathcal{BH}(\theta)$ for some $\theta\in (0,\pi/2].$
Then for every $\psi\in
\mathcal{BF}$ one has $-\psi(A)\in\mathcal{BH}(\theta)$. Moreover,
if
\begin{equation}\label{Arb}
\alpha=\pi/2-\theta,  \quad 2< q < \pi/\alpha, \quad  \text{and} \quad
\pi/2<\gamma<(1-q^{-1})\pi,
\end{equation}
then
\begin{equation}\label{AESA}
\|z(z+\psi(A))^{-1}\|\le \widetilde{C}_{q,\gamma}(\theta),\qquad
z\in \Sigma_\gamma,
\end{equation}
where
\begin{equation}\label{defCC}
\widetilde{C}_{q,\gamma}(\theta)=
\frac{\tilde{M}_{\alpha,q}(A)\sin(\pi/q)}{\pi/q} \frac{\pi/q +\gamma}{\sin(\pi/q +\gamma)}+ \frac{4
M(A,\pi/2+\theta-\delta/2)}{\pi
\sin^2(\theta/2)\,\sin^2(\delta/4)},
\end{equation}
and $\delta=\pi/2+\theta-\gamma$.
\end{thm}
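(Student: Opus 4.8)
The plan is to read off the resolvent of $\psi(A)$ from the identity of Proposition~\ref{SC1} and to bound its two summands by the sectoriality estimates already available, then to let the opening angle of the sector tend to its supremum to extract the holomorphy angle. Throughout set $\alpha=\pi/2-\theta$, so that $A\in\Sect(\alpha)$ by the equivalence $-A\in\mathcal{BH}(\pi/2-\omega)\Leftrightarrow A\in\Sect(\omega)$, and let $\varphi\in\mathcal{CBF}$ be the complete Bernstein function associated with $\psi$ in the sense of Definition~\ref{Im}. Fix $q,\gamma$ as in \eqref{Arb}. Since $q>2$ we have $(1-q^{-1})\pi>\pi/2$, so the admissible window for $\gamma$ is nonempty, and since $q<\pi/\alpha$ we have $(1-q^{-1})\pi<\pi-\alpha=\pi/2+\theta$; hence $\gamma\in(\pi/2,\pi/2+\theta)$ is an admissible value of the fixed angle $\omega$ of this section. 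Taking $\omega=\gamma$, Proposition~\ref{SC1} gives the representation
\[
(z+\psi(A))^{-1}=(z+\varphi(A))^{-1}+r(A;z),\qquad z\in\Sigma_\gamma.
\]

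Next I would estimate the two terms separately on $\Sigma_\gamma$. For the first term, $\varphi\in\mathcal{CBF}$ and $A\in\Sect(\alpha)$ let one apply Theorem~\ref{LB}: as $q\in(1,\pi/\alpha)$ and $\gamma\in(0,(1-q^{-1})\pi)$, the bound \eqref{Aest} gives
\[
\|z(z+\varphi(A))^{-1}\|\le\frac{\tilde{M}_{\alpha,q;\gamma}(A)\sin(\pi/q)}{\pi/q}\,\frac{\pi/q+\gamma}{\sin(\pi/q+\gamma)},\qquad z\in\Sigma_\gamma,
\]
which is the first summand of $\widetilde{C}_{q,\gamma}(\theta)$. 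For the second term, Proposition~\ref{Tr} applies with $\delta=\pi/2+\theta-\gamma>0$, and \eqref{SMS} yields
\[
\|z\,r(A;z)\|\le\frac{4\,M(A,\pi/2+\theta-\delta/2)}{\pi\,\sin^2(\theta/2)\,\sin^2(\delta/4)},\qquad z\in\Sigma_\gamma,
\]
which is the second summand of $\widetilde{C}_{q,\gamma}(\theta)$. Multiplying the resolvent identity by $z$ and applying the triangle inequality then produces \eqref{AESA} with the constant \eqref{defCC}.

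It remains to deduce the holomorphy statement and the precise angle. The estimate \eqref{AESA} shows that $-\Sigma_\gamma\subset\rho(\psi(A))$ and that $\lambda(\lambda-\psi(A))^{-1}$ is uniformly bounded for $\lambda\notin\Sigma_{\pi-\gamma}$, whence $\psi(A)\in\Sect(\pi-\gamma)$. Since $q$ may be taken arbitrarily close to $\pi/\alpha$ and then $\gamma$ arbitrarily close to $(1-q^{-1})\pi$, the angle $\gamma$ can be pushed arbitrarily close to $\pi/2+\theta$ from below; consequently the sectoriality angle of $\psi(A)$ does not exceed $\pi-(\pi/2+\theta)=\alpha=\pi/2-\theta$. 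By the same equivalence used above, this is precisely $-\psi(A)\in\mathcal{BH}(\theta)$.

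The point to watch is the simultaneous admissibility of the three constraints on $\gamma$: it must exceed $\pi/2$ (to reach an angle genuinely beyond that of a merely bounded semigroup), remain below $(1-q^{-1})\pi$ (so that Theorem~\ref{LB} governs $\varphi(A)$), and remain below $\pi/2+\theta$ (so that Propositions~\ref{SC1} and~\ref{Tr} apply). The hypothesis $q>2$ is exactly what makes the window $(\pi/2,(1-q^{-1})\pi)$ nonempty, while $q<\pi/\alpha$ keeps it inside $(\pi/2,\pi/2+\theta)$. One should also observe that $\widetilde{C}_{q,\gamma}(\theta)$ blows up like $\sin^{-2}(\delta/4)$ as $\gamma\uparrow\pi/2+\theta$; this is harmless, since membership in $\mathcal{BH}(\theta)$ requires only sectorial boundedness on each proper subsector $\Sigma_\gamma$ with $\gamma<\pi/2+\theta$, not a uniform bound up to the boundary ray.
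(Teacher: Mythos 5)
Your proposal is correct and follows essentially the same route as the paper: it combines the resolvent identity of Proposition~\ref{SC1} (with $\omega=\gamma$) with the bound \eqref{Aest} of Theorem~\ref{LB} for the $\varphi(A)$-term and the bound \eqref{SMS} for $r(A;z)$, and then lets $q\to\pi/\alpha$ and $\gamma\to(1-q^{-1})\pi$ to push the sectoriality angle of $\psi(A)$ down to $\pi/2-\theta$. Your explicit verification that the window $(\pi/2,(1-q^{-1})\pi)$ is nonempty and sits inside $(\pi/2,\pi/2+\theta)$ is exactly the bookkeeping the paper leaves implicit.
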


\begin{proof}
Let  $\varphi\in \mathcal{CBF}$ be the function associated with
$\psi\in \mathcal{BF}$. By Theorem \ref{LB},  if $-A\in
\mathcal{BH}(\theta)$ then $-\varphi(A)\in\mathcal{BH}(\theta)$.
Moreover, if $q$ and $\gamma$ satisfying \eqref{Arb} are fixed, then  by (\ref{SMS}),
\begin{equation}\label{SC10}
\|z r(A;z)\|\le  \frac{4 M(A,\pi/2+\theta-\delta/2)}{\pi
\sin^2(\theta/2)\,\sin^2(\delta/4)}, \qquad z\in \Sigma_\gamma,
\end{equation}
where $\delta=\pi/2+\theta-\gamma$.
Hence from Proposition \ref{SC1}, Theorem \ref{LB}  and (\ref{SC10}) it
follows that
(\ref{AESA}) holds with  the constant
$\widetilde{C}_{q,\gamma}(\theta)$ defined by (\ref{defCC}).
Since the choice of  $q$ and $\gamma$ satisfying (\ref{Arb}) is
arbitrary,
and $\gamma$ is arbitrarily close to $\pi-\alpha=\pi/2+\theta$ if $q$ is sufficiently close to $\pi/\alpha,$
we conclude that $-\psi(A)\in\mathcal{BH}(\theta)$.
\end{proof}

Theorem \ref{Tangle} has a version saying that Bernstein functions
preserve the class of  bounded (but not necessarily sectorially)
holomorphic $C_0$-semigroups. This version is an  immediate consequence
of Theorem \ref{Tangle} and the following lemma.

\begin{lemma}\label{sdvig}
Let $-A$ be the generator of a bounded $C_0$-semigroup on $X$ and let $\psi$ be a Bernstein function.
Suppose there exists $d\ge 0$ such that $-\psi(A+d)\in \mathcal{H}(\theta)$
for some $\theta \in(0,\pi/2].$
Then  $-\psi(A) \in H(\theta).$
\end{lemma}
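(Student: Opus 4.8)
The plan is to exhibit $-\psi(A)$ as a bounded additive perturbation of $-\psi(A+d)$ and then to transport the holomorphy of angle $\theta$ across this perturbation. The case $d=0$ is vacuous, so assume $d>0$. Since $e^{-t(A+d)}=e^{-dt}e^{-tA}$, the operator $-(A+d)$ also generates a bounded $C_0$-semigroup; hence $\psi(A+d)$ is well defined and, by Theorem \ref{BochP}(ii), both $-\psi(A)$ and $-\psi(A+d)$ generate bounded $C_0$-semigroups.

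First I would apply Proposition \ref{limit}(i) with $\epsilon=d$. It produces the bounded operator $B:=[\psi(\cdot+d)-\psi(\cdot)](A)\in\mathcal{L}(X)$, the domain equality $\dom(\psi(A+d))=\dom(\psi(A))$, and the identity $\psi(A+d)=\psi(A)+B$. Rearranging gives
\[
-\psi(A)=-\psi(A+d)+B
\]
on the common domain, so $-\psi(A)$ is precisely the perturbation of the generator $-\psi(A+d)$ by the bounded operator $B$.

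It remains to push the holomorphy of angle $\theta$ through the addition of $B$. By hypothesis $-\psi(A+d)\in\mathcal{H}(\theta)$, which for each $\theta'\in(0,\theta)$ furnishes constants $\omega\in\R$ and $M>0$ with $\omega+\Sigma_{\pi/2+\theta'}\subset\rho(-\psi(A+d))$ and $\|(\lambda+\psi(A+d))^{-1}\|\le M/|\lambda-\omega|$ there. Using the factorisation
\[
\lambda+\psi(A)=(\lambda+\psi(A+d))\bigl(I-(\lambda+\psi(A+d))^{-1}B\bigr)
\]
and moving the vertex to some $\omega'>\omega$ so that $\|(\lambda+\psi(A+d))^{-1}B\|\le M\|B\|/|\lambda-\omega|\le 1/2$ on $\omega'+\Sigma_{\pi/2+\theta'}$, a Neumann-series argument yields $\omega'+\Sigma_{\pi/2+\theta'}\subset\rho(-\psi(A))$ together with a resolvent bound of the same form. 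As $\theta'<\theta$ was arbitrary, this is exactly the resolvent characterisation of $-\psi(A)\in\mathcal{H}(\theta)$. The main — indeed the only — delicate point is this last step: because $\mathcal{H}(\theta)$ is the merely holomorphic, not sectorially bounded, class, one cannot argue through the sectoriality equivalence $A\in\Sect(\pi/2-\theta)$ (a bounded perturbation may destroy the resolvent estimate near the origin), and one must instead work with a spectral shift $\omega$, which $\mathcal{H}(\theta)$ tolerates, preserving only the opening angle $\pi/2+\theta'$ of the resolvent sector. This angle-preserving bounded perturbation of holomorphic semigroups is classical (see, e.g., \cite{EngNag}).
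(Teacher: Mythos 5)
Your argument is correct, and it reaches the conclusion by a genuinely different route than the paper. Both proofs begin identically, with Proposition \ref{limit}\,(i) producing the bounded operator $B=[\psi(\cdot+d)-\psi(\cdot)](A)$, the domain equality, and the identity $\psi(A+d)=\psi(A)+B$. From there the paper does not perturb resolvents at all: it shows that the semigroups $(e^{-t\psi(A+d)})_{t\ge 0}$ and $(e^{tB})_{t\ge 0}$ commute (via commutation of their resolvents and the results cited from \cite{Nagel} and \cite{Aziz}), invokes the product theorem for commuting semigroups from \cite[Subsection II.2.7]{EngNag} to get the explicit factorisation $e^{-t\psi(A)}=e^{-t\psi(A+d)}e^{tB}$, and then reads off holomorphy in $\Sigma_\theta$ because $z\mapsto e^{zB}$ is entire. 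You instead run the classical bounded-perturbation theorem for generators of analytic semigroups: resolvent estimates on shifted sectors $\omega+\Sigma_{\pi/2+\theta'}$ plus a Neumann series. Your remark about why one must shift the vertex rather than argue through $\Sect(\pi/2-\theta)$ is exactly right, since $\mathcal H(\theta)$ carries no boundedness. The trade-off: your route leans on the equivalence between membership in $\mathcal H(\theta)$ and the shifted-sector resolvent bounds for every $\theta'<\theta$, which for merely holomorphic (not sectorially bounded) semigroups requires knowing that a bounded $C_0$-semigroup with a holomorphic extension to $\Sigma_\theta$ is exponentially bounded on proper subsectors --- this is legitimate here (it is \cite[Proposition 3.7.2\,b)]{ABHN01}, which the paper itself invokes in the proof of Corollary \ref{Tangle1}, and $e^{-t\psi(A+d)}$ is bounded on $[0,\infty)$ by Theorem \ref{BochP}), but it should be said explicitly. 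The paper's route avoids that characterisation entirely and yields a concrete product formula for the holomorphic extension, at the cost of having to justify the commutativity of the two semigroups.
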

\begin{proof}
By Proposition \ref{limit}, (i),
we have
\begin{equation}\label{sumphil1}
\psi(A+d)= \psi(A) + [\psi(\cdot +d)-\psi(\cdot)](A)=\psi(A) + B_d.
\end{equation}
By the product rule for the (extended) Hille-Phillips calculus we have
\[
(\psi(A+d) + s)^{-1}(B_d+s)^{-1}=(B_d+s)^{-1}(\psi(A+d) + s)^{-1}
\]
for sufficiently large $s>0.$
Then, by \cite[Section A-I.3.8, p. 24]{Nagel} (see also \cite[Theorem 1]{Aziz}) it follows that
the $C_0$-semigroups
$(e^{-t\psi(A+d)})_{t\ge 0}$ and $(e^{-tB_d})_{t\ge 0}$ commute.
Then, taking into account that $\dom(\psi(A))=\dom(\psi(A+d))$ by Proposition \ref{limit}, (ii)
and using \cite[Subsection II.2.7]{EngNag}, we conclude that
\begin{equation}\label{prodS}
e^{-t\psi(A)}=e^{-t\psi(A+d)}e^{tB_d},\qquad t>0.
\end{equation}
Since $(e^{tB_d})_{t \ge 0}$ extends to an entire function,  the statement of lemma follows.
\end{proof}

\begin{cor}\label{Tangle1}
 Let $-A$ be the generator of a bounded $C_0$-semigroup on $X$ such that
 $-A\in \mathcal{H}(\theta)$ for some $\theta\in (0,\pi/2].$
Then for every $\psi\in
\mathcal{BF}$ one has $-\psi(A)\in\mathcal{H}(\theta)$.
\end{cor}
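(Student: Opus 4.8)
The plan is to deduce the statement from Theorem \ref{Tangle} (the sectorially bounded case) together with the shift Lemma \ref{sdvig}, working one subangle at a time. Fix an arbitrary $\theta'\in(0,\theta)$. I would first produce a constant $d=d(\theta')\ge 0$ such that $-(A+d)\in\mathcal{BH}(\theta')$. Then Theorem \ref{Tangle}, applied to the sectorially bounded holomorphic semigroup generated by $-(A+d)$, yields $-\psi(A+d)\in\mathcal{BH}(\theta')\subset\mathcal{H}(\theta')$, and Lemma \ref{sdvig} (with $\theta'$ in place of $\theta$) gives $-\psi(A)\in\mathcal{H}(\theta')$. Since the holomorphic extensions of $e^{-\cdot\psi(A)}$ to the sectors $\Sigma_{\theta'}$ are consistent by uniqueness of analytic continuation and $\Sigma_\theta=\bigcup_{\theta'<\theta}\Sigma_{\theta'}$, letting $\theta'\uparrow\theta$ glues these extensions into a holomorphic extension on $\Sigma_\theta$, so $-\psi(A)\in\mathcal{H}(\theta)$. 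Note that $A+d$ does generate a bounded $C_0$-semigroup, since $\|e^{-t(A+d)}\|=e^{-td}\|e^{-tA}\|\le M$ for $d\ge 0$, so that $\psi(A+d)$ is defined as in Theorem \ref{BochP}.

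The heart of the argument is the claim that $-A\in\mathcal{H}(\theta)$ together with boundedness of $(e^{-tA})_{t\ge 0}$ forces $-(A+d)\in\mathcal{BH}(\theta')$ once $d$ is large. To see this, fix $\theta''\in(\theta',\theta)$ and set $T(z)=e^{-zA}$, which is holomorphic on $\Sigma_\theta\supset\overline{\Sigma}_{\theta''}\setminus\{0\}$. Using that $T$ is continuous (hence bounded) on the compact set $\{z\in\overline{\Sigma}_{\theta''}:|z|\le 1\}$, with $T(0)=I$, and the semigroup law $T(z)=T(z/n)^n$ for $n=\lceil|z|\rceil$, one obtains an a priori bound $\|T(z)\|\le M_1 e^{C|z|}$ on $\overline{\Sigma}_{\theta''}$, i.e. $T$ is of exponential order $1$ there. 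On the boundary rays $z=re^{\pm i\theta''}$ this gives $\|e^{-z(A+d)}\|=e^{-rd\cos\theta''}\|T(z)\|\le M_1 e^{r(C-d\cos\theta'')}$, which is bounded as soon as $d\ge C/\cos\theta''$. Since $\overline{\Sigma}_{\theta''}\subset\overline{\C}_+$ forces $e^{-d\re z}\le 1$, the function $g(z):=e^{-z(A+d)}$ is itself of exponential order $1$ on the sector $\Sigma_{\theta''}$ of opening $2\theta''<\pi$, and bounded on its boundary. A Phragm\'en--Lindel\"of argument, applied to the scalar functions $z\mapsto\langle g(z)x,x^\ast\rangle$ and then taking the supremum over $\|x\|,\|x^\ast\|\le 1$, shows $g$ is bounded on $\overline{\Sigma}_{\theta''}$, whence $-(A+d)\in\mathcal{BH}(\theta'')\subset\mathcal{BH}(\theta')$.

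The main obstacle is precisely this shift lemma, and two points deserve care. First, the a priori exponential bound on proper subsectors must be secured before invoking Phragm\'en--Lindel\"of; the power-trick above supplies it cheaply from holomorphy and the semigroup property, and the opening angle $2\theta''<\pi$ keeps the growth order $1$ strictly below the critical order $\pi/(2\theta'')$, so the principle applies. Second, a single shift cannot in general reach the full angle $\theta$: already the one-dimensional example $A=\mathrm{i}c$ with $c\ne 0$ shows $-(A+d)\notin\mathcal{BH}(\pi/2)$ for every finite $d$, which is exactly why the reduction is carried out for each $\theta'<\theta$ separately and the sectors are exhausted at the end. Everything else is a direct chaining of Theorem \ref{Tangle} and Lemma \ref{sdvig}.
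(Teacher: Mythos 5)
Your overall route is exactly the paper's: for each $\theta'<\theta$, shift $A$ by a constant $d=d(\theta')$ so that $-(A+d)\in\mathcal{BH}(\theta')$, apply Theorem \ref{Tangle}, undo the shift with Lemma \ref{sdvig}, and exhaust $\Sigma_\theta$ by the subsectors $\Sigma_{\theta'}$. The paper disposes of the shift step by citing \cite[Proposition 3.7.2 b)]{ABHN01}; you instead try to prove it from scratch, and that is where the one real problem sits.

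Your derivation of the a priori exponential bound starts from the assertion that $T(z)=e^{-zA}$ is ``continuous (hence bounded) on the compact set $\{z\in\overline{\Sigma}_{\theta''}:|z|\le 1\}$, with $T(0)=I$''. This is not available. In operator norm $T$ is never continuous at $z=0$ unless $A$ is bounded, and the paper's definition of $-A\in\mathcal{H}(\theta)$ asks only for a holomorphic extension of $e^{-\cdot A}$ to $\Sigma_\theta$; it does not include strong convergence $T(z)x\to x$ as $z\to 0$ inside closed subsectors, which is what you would need (via uniform boundedness) to conclude $\sup\{\|T(z)\|: z\in\overline{\Sigma}_{\theta''},\ 0<|z|\le 1\}<\infty$. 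Holomorphy gives boundedness on compact subsets of the \emph{open} sector, so the rays $\arg z=\pm\theta''$ away from the origin are harmless, but the behaviour as $z\to 0$ in the closed subsector is precisely the nontrivial content here; it is essentially what the cited ABHN proposition supplies. Everything downstream of this point is fine: the power trick $T(z)=T(z/n)^n$ giving $\|T(z)\|\le M_1e^{C|z|}$, the choice $d\ge C/\cos\theta''$ to tame the boundary rays, the Phragm\'en--Lindel\"of argument of order $1<\pi/(2\theta'')$ applied to the scalarized functions, your correct observation (via $A=\mathrm{i}c$) that a single shift cannot reach the full angle and that one must work with each $\theta'<\theta$ separately, and the final gluing by uniqueness of analytic continuation. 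So either cite the boundedness near $0$ on closed subsectors, as the paper does, or supply an actual proof of it; as written, ``continuous hence bounded'' begs the question at the only genuinely delicate point of your self-contained version of the shift lemma.
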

\begin{proof}
Observe that if  $(e^{-tA})_{t\ge 0}$  is a bounded
$C_0$-semigroup  admitting a holomorphic extension to
 $\Sigma_\theta$, $\theta\in (0,\pi/2]$,
then by e.g. \cite[Proposition 3.7.2 $ b)$]{ABHN01}  we infer that for fixed $\theta'\in (0,\theta)$
and  big enough  $d>0$ the operator
$-(d+A)$ generates a $C_0$-semigroup $(e^{-t(d+A)})_{t\ge 0}$
which is holomorphic and sectorially bounded in $\Sigma_{\theta'}$.
Then, by  Theorem \ref{Tangle} the
$C_0$-semigroup
$(e^{-t\psi(d+A)})_{t\ge 0}$ is also holomorphic and
sectorially bounded in $\Sigma_{\theta'}$.
By Lemma \ref{sdvig}, $-\psi(A)$ generates a bounded $C_0$-semigroup which extends holomorphically to $\Sigma_{\theta'}.$
Since the choice of $\theta'\in (0,\theta)$ is arbitrary, the corollary follows.
\end{proof}

\begin{remark}\label{gapMir}
 It was claimed  in \cite{Mir2} that
if $-A$ is the generator of a bounded $C_0$-semigroup on $X$
then
$-A\in \cup_{\theta\in (0,\pi/2]}\mathcal{H}(\theta)$ implies the same property for $-\psi(A).$
Unfortunately, the proof of this fact in
\cite{Mir2} seems to contain a mistake. Specifically, in the
notation of \cite{Mir2}, the proof relies on the boundedness of
the operator $\psi(A)g_t(A)$ which not proved in \cite{Mir2}. (In
fact, it is easy to show that the boundedness of $\psi(A)g_t(A)$
is equivalent to the holomorphicity of $(e^{-t\psi(A)})_{t\ge 0}$).
Nonetheless, the holomorphicity of $(e^{-t\psi(A)})_{t\ge 0}$) was
proved in \cite{Mir1} for uniformly convex $X$ by means of
Kato-Pazy's criterion, see \cite{Ka70} and \cite[Corollaries 2.5.7
and 2.5.8]{Pazy}.
\end{remark}

Let $-A$ be the generator of a bounded $C_0$-semigroup $on$ with dense
range (and then trivial kernel by the mean ergodic theorem).
Consider so-called Stieltjes functions  $f:(0,\infty)\to
(0,\infty)$ which can be defined by the property that $1/f\in
\mathcal{CBF}.$ Recall also that for $f \in \mathcal {CBF}$ one
has that $1/f$ is Stieltjes so that the class of Stieltjes
functions is, in a sense, a reciprocal dual of the class of
complete Bernstein functions. Note that for Stieltjes $f$ the
operator $-f(A)$, does not, in general, generate a
$C_0$-semigroup. For example, if $f(z)=1/z$ then $f(A)=A^{-1}$,
and the corresponding counterexample can be found in \cite{GZT}.
On the other hand, for generators of sectorially bounded
holomorphic $C_0$-semigroups the situation is different and we
have the following statement, which follows from Theorem
\ref{Tangle}, (\ref{fpsi}) and the fact that inverses of
generators of bounded holomorphic $C_0$-semigroups of angle
$\theta$ generate semigroups of the same kind.  (For the latter
statement see e.g. \cite{Laub1}.)

Recall that $f \in \mathcal P$ if there exists a nonzero $\psi \in \mathcal{BF}$ such that $f=1/\psi.$
Note also that by the discussion preceding Proposition \ref{regSP}, for $f\in \mathcal P$ the operator $f(A)$ is well-defined in the (extended) holomorphic functional calculus
for $A \in {\rm Sect}(\theta), \theta \in (0,\pi/2],$ having dense range.
\begin{thm}\label{SpecCG1}
Suppose that $-A\in \mathcal{BH}(\theta)$ for some $\theta\in
(0,\pi/2]$ and $\ran(A)$ is dense. The for every $f\in
\mathcal{P}$ one has $-f(A)\in\mathcal{BH}(\theta)$.
\end{thm}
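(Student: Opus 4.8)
The plan is to deduce the statement directly from three results already established: Theorem \ref{Tangle}, the identity \eqref{fpsi}, and the permanence of sectorially bounded holomorphy under operator inversion recorded in \cite{Laub1}.

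First I would fix $f\in\mathcal{P}$ and write $f=1/\psi$ for some $\psi\in\mathcal{BF}$ with $\psi\not\equiv 0$. Since $-A\in\mathcal{BH}(\theta)$ with $\theta\in(0,\pi/2]$, we have $A\in\Sect(\pi/2-\theta)$ and $\pi/2-\theta\in[0,\pi/2)$, so Theorem \ref{Tangle} gives $-\psi(A)\in\mathcal{BH}(\theta)$, that is, $\psi(A)\in\Sect(\pi/2-\theta)$. As $A$ has dense range, the discussion preceding Proposition \ref{regSP} shows that $\psi(A)$ and $f(A)$ are both defined through the extended holomorphic functional calculus with the regulariser $\tau_\epsilon$ from \eqref{part1}, and \eqref{fpsi} yields that $\psi(A)$ is injective with
\[
f(A)=[\psi(A)]^{-1}.
\]

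It then remains to pass from $\psi(A)$ to its inverse. The closed sector $\overline{\Sigma}_{\pi/2-\theta}$ is invariant under $\lambda\mapsto\lambda^{-1}$, so the sectoriality bound for the injective operator $\psi(A)\in\Sect(\pi/2-\theta)$ transfers verbatim to $[\psi(A)]^{-1}$; equivalently, $-[\psi(A)]^{-1}=-f(A)$ generates a sectorially bounded holomorphic $C_0$-semigroup of angle $\pi/2-(\pi/2-\theta)=\theta$. This is exactly the inverse-generator statement of \cite{Laub1} applied to $B=\psi(A)$, and it gives $-f(A)\in\mathcal{BH}(\theta)$, as desired.

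The point requiring genuine care, and the one I expect to be the main (if mild) obstacle, is that the inverse $[\psi(A)]^{-1}$ must be \emph{densely defined} for it to generate a $C_0$-semigroup. Since $\dom(f(A))=\ran(\psi(A))$, this reduces to showing that $\psi(A)$ has dense range, which I would derive from the dense range hypothesis on $A$. Indeed, by \eqref{domian} the regulariser satisfies $\ran(\tau_\epsilon(A))\subseteq\dom(f(A))$, and for an injective $A$ with dense range one has $\overline{\ran(\tau_\epsilon(A))}=\overline{\ran(A)}=X$, because $\tau_\epsilon(z)$ equals $z^2$ times a boundedly invertible rational factor. Hence $\dom(f(A))=\ran(\psi(A))$ is dense, the inverse-generator theorem applies, and the proof is complete.
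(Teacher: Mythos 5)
Your argument is correct and takes essentially the same route as the paper, which derives the theorem in one line from Theorem \ref{Tangle} applied to $\psi=1/f$, the identity \eqref{fpsi} giving $f(A)=[\psi(A)]^{-1}$, and the inverse-generator result of \cite{Laub1}. Your extra verification that $\dom(f(A))=\ran(\psi(A))$ is dense (via the regulariser and the dense range of $A$) is a detail the paper leaves implicit, and you handle it correctly.
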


Now we can extend the classes of admissible $\psi$ and $f$ in  Theorems \ref{Tangle} and \ref{SpecCG1}.

\begin{cor}\label{SpecCG12}
Suppose that $-A\in \mathcal{BH}(\theta)$ for some $\theta\in
(0,\pi/2]$ and $\ran(A)$ is dense. If $h= \psi+f$, where $\psi\in
\mathcal{BF}$ and $f\in \mathcal{P},$ then
$-h(A)\in\mathcal{BH}(\theta)$.
\end{cor}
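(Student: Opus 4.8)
The plan is to realise $-h(A)$ as the generator of a product of two commuting sectorially bounded holomorphic semigroups, in the same spirit as the proof of Lemma~\ref{sdvig}. First I would record that, since $-A\in\mathcal{BH}(\theta)$ forces $A\in\Sect(\pi/2-\theta)$ with $\pi/2-\theta\in[0,\pi/2)$ and $\ran(A)$ is dense, Proposition~\ref{regSP} applies and gives
\[
h(A)=\overline{\psi(A)+f(A)},\qquad \dom(\psi(A)+f(A))=\dom(\psi(A))\cap\dom(f(A)).
\]
Next, Theorem~\ref{Tangle} yields $-\psi(A)\in\mathcal{BH}(\theta)$ and Theorem~\ref{SpecCG1} yields $-f(A)\in\mathcal{BH}(\theta)$, so both $\psi(A)$ and $f(A)$ generate sectorially bounded holomorphic $C_0$-semigroups of angle $\theta$.

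The key step is to multiply these two semigroups. Since $\psi(A)$ and $f(A)$ are both defined through the functional calculus of the single operator $A$, their resolvents commute with every resolvent of $A$, hence with each other by Proposition~\ref{funcal}, (i); consequently the semigroups $(e^{-t\psi(A)})_{t\ge0}$ and $(e^{-tf(A)})_{t\ge0}$ commute (cf. \cite[Section A-I.3.8]{Nagel} and \cite[Theorem 1]{Aziz}, as in Lemma~\ref{sdvig}). I would therefore set
\[
R(z):=e^{-z\psi(A)}\,e^{-zf(A)},\qquad z\in\Sigma_\theta,
\]
and check that $R$ is a sectorially bounded holomorphic $C_0$-semigroup of angle $\theta$: the semigroup law $R(z_1)R(z_2)=R(z_1+z_2)$ follows from commutativity together with the semigroup laws of the two factors, holomorphy on $\Sigma_\theta$ and uniform boundedness on each $\Sigma_{\theta'}$, $\theta'\in(0,\theta)$, are inherited from the corresponding properties of $(e^{-z\psi(A)})$ and $(e^{-zf(A)})$, and strong continuity at $0$ is clear since both factors are $C_0$-semigroups.

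It then remains to identify the generator of $R$. Differentiating $R(t)x$ at $t=0$ for $x\in\dom(\psi(A))\cap\dom(f(A))$ gives $-(\psi(A)+f(A))x$, so by \cite[Subsection II.2.7]{EngNag} the generator $G$ of $R$ extends $-(\psi(A)+f(A))$, and being closed it extends its closure $-h(A)$. Since $G$ and $-h(A)$ are both generators of bounded $C_0$-semigroups, they share a point of their resolvent sets at which the corresponding resolvents are everywhere-defined bijections of $X$; as a generator admits no proper generator extension, this forces $G=-h(A)$. Thus $-h(A)$ generates $R$ and therefore $-h(A)\in\mathcal{BH}(\theta)$.

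I expect the only genuine subtlety to be the justification that the two semigroups commute and, correspondingly, that the generator of the product is exactly the closure $-\overline{\psi(A)+f(A)}=-h(A)$ rather than a strictly larger operator; both points are resolved by the functional-calculus commutation in Proposition~\ref{funcal}, (i) and by Proposition~\ref{regSP}, exactly as the analogous step was handled in Lemma~\ref{sdvig}.
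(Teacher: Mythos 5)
Your proposal follows essentially the same route as the paper: decompose $h(A)=\overline{\psi(A)+f(A)}$ via Proposition \ref{regSP}, apply Theorems \ref{Tangle} and \ref{SpecCG1} to the two summands, deduce commutativity of the two semigroups from the functional calculus, and identify the generator of the product semigroup $e^{-t\psi(A)}e^{-tf(A)}$ using \cite[Subsection II.2.7]{EngNag}. The only wrinkle is your final identification step, which presupposes that $-h(A)$ is already a generator (slightly circular as phrased); this is harmless, since the cited result in \cite[Subsection II.2.7]{EngNag} directly yields that the product semigroup is generated by $\overline{-\psi(A)-f(A)}=-h(A)$, which is exactly how the paper concludes.
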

\begin{proof}
By Theorems \ref{Tangle} and \ref{SpecCG1}, we have
$-\psi(A)\in\mathcal{BH}(\theta),$
 $-f(A)\in\mathcal{BH}(\theta).$ By the product rule for the (extended) holomorphic functional calculus
 it follows that for every $s>0:$
 \begin{eqnarray*}
 [(s+\psi(\cdot))^{-1} (s+f(\cdot))^{-1}](A)&=&(s+\psi(A))^{-1} (s+f(A))^{-1}\\
 &=&(s+f(A))^{-1} (s+\psi(A))^{-1}.
\end{eqnarray*}
Hence, as in the proof of Lemma \ref{sdvig},  the semigroups
$(e^{-t\psi(A)})_{t\ge 0}$ and $(e^{-tf(A)})_{t\ge 0}$ commute. Then,
 by \cite[Subsection II.2.7]{EngNag},
$\overline{-\psi(A)-f(A)}$ generates a $C_0$-semigroup
$(e^{-t\psi(A)}e^{-tf(A)})_{t\ge 0}$, and therefore
$\overline{-\psi(A)-f(A)}\in \mathcal{BH}(\theta)$. From this, by
Proposition \ref{regSP}, it follows
$-h(A)\in\mathcal{BH}(\theta)$.
\end{proof}

Note that in the particular  case when $\varphi\in \mathcal{CBF}$
and $f$ is a  Stieltjes function (i.e. $1/f\in \mathcal{CBF}$), Corollary \ref{SpecCG1} was proved in
\cite[Theorem 6.4]{Laub}.

\section{Improving properties of Bernstein functions: Carasso-Kato functions}\label{Main2}

Let us first recall some notions and results from \cite{Carasso}.
To this aim and for formulating our results in this section the next definition
will be helpful.
\begin{defn}
A Bernstein function $\psi$ is said to be Carasso-Kato if
for every Banach space $X$, and every
bounded $C_0$-semigroup $(e^{-tA})_{t\ge 0}$ on $X,$ the $C_0$-semigroup
$(e^{-t\psi(A)})_{t \ge 0}$ is holomorphic.
\end{defn}
Following
\cite{Carasso}, denote the set of vaguely continuous convolution
semigroups of subprobability measures on $[0,\infty)$
by $\mathcal{T}$.
Let $\mathcal{I}$ stand for  the set of $(\mu_t)_{t\ge 0}\in
\mathcal{T}$ such that
a Bernstein function $\psi$  given by
$(\mu_t)_{t\ge 0}$ via
Bochner's formula \eqref{CMonG} is Carraso-Kato.
Let us finally denote by $\mathcal{T}_1\subset \mathcal{T}$ the set of
functions $[0,\infty) \ni t \mapsto \mu_t$ such that $\mu_t$
is continuously differentiable in $M_b([0,\infty))$ for $t>0$, with
\[
\|{\mu_t}'\|_{M_b}=\mbox{O}(t^{-1})\quad \mbox{as} \quad t\to 0+.
\]

Recall that by \cite[Theorem 4]{Carasso},
\begin{equation}\label{CK}
\mathcal{I}=\mathcal{T}_1.
\end{equation}
Moreover, $(\mu_t)_{t\ge 0}\in \mathcal{I}$ implies that $\psi\in
\mathcal{BF}$ defined  by  \eqref{CMonG} satisfies
\begin{equation}\label{run}
\psi(\C_+)\subset \overline{\Sigma}_\gamma-\beta:= \{\lambda\in
\C:\, \lambda+\beta\in \overline{\Sigma}_\gamma\}
\end{equation}
for some $\gamma\in (0,\pi/2)$ and $\beta\ge 0.$ Hence, as it was
shown in \cite{Carasso}, there exists $K>0$ such that
\[
|\psi(z)|\le K|z|^{2\gamma/\pi},\qquad |z|\ge 1,\qquad z\in \C_{+}.
\]
While \cite{Carasso} describes Carraso-Kato functions $\psi$
in terms of the families of measures $(\mu_t)_{t \ge 0}$ corresponding to $\psi$  via \eqref{CMonG}, the results of \cite{Carasso} are not  so easy to apply since one is usually given $\psi$ rather than  the corresponding family $(\mu_t)_{t \ge 0}.$ The aim of this section is to single out substantial classes of Carraso-Kato functions $\psi$ in terms of geometric properties of $\psi$ themselves.

Note first that
Corollary \ref{Tangle1}  yields immediately the following assertion.
\begin{cor}\label{GK}
Let  $\psi\in \mathcal{BF}$ and let $\varphi$ be a Carasso-Kato
function. Then  $\psi\circ \varphi$ is also Carasso-Kato.
\end{cor}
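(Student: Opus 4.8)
The plan is to deduce the statement directly from Corollary \ref{Tangle1} after identifying the subordinate of a subordinate semigroup. First I would record that $\psi\circ\varphi$ is again a Bernstein function, since $\mathcal{BF}$ is closed under composition (see \cite{SchilSonVon2010}); only then is it meaningful to ask whether $\psi\circ\varphi$ is Carasso-Kato. Next I would fix an arbitrary complex Banach space $X$ and an arbitrary bounded $C_0$-semigroup $(e^{-tA})_{t\ge 0}$ on $X$ with generator $-A$, and reduce the assertion to showing that $(e^{-t(\psi\circ\varphi)(A)})_{t\ge 0}$ is holomorphic.

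I would then exploit the Carasso-Kato property of $\varphi$. By Theorem \ref{BochP}, (ii), $-\varphi(A)$ generates a bounded $C_0$-semigroup on $X$, and by assumption this semigroup is holomorphic, i.e. $-\varphi(A)\in\mathcal{H}(\theta)$ for some $\theta\in(0,\pi/2]$. Writing $B:=\varphi(A)$, the operator $-B$ generates a bounded $C_0$-semigroup with $-B\in\mathcal{H}(\theta)$, so Corollary \ref{Tangle1} applies to $B$ and gives
\[
-\psi(B)=-\psi(\varphi(A))\in\mathcal{H}(\theta).
\]

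It then remains to identify $\psi(\varphi(A))$ with $(\psi\circ\varphi)(A)$, and this composition step is the only nontrivial point; the rest is an immediate application of Corollary \ref{Tangle1}. I would argue at the semigroup level via associativity of Bochner subordination. Let $(\mu_t)_{t\ge 0}$ and $(\nu_t)_{t\ge 0}$ be the convolution semigroups attached to $\varphi$ and $\psi$ through \eqref{CMonG}, and define the mixtures $\rho_t:=\int_0^\infty \mu_s\,\nu_t(ds)$. Applying Theorem \ref{BochP}, (ii), twice and invoking a vector-valued Fubini argument (legitimate since $\sup_{t\ge 0}\|e^{-tA}\|<\infty$ and the measures are subprobability) gives
\[
e^{-t\psi(\varphi(A))}=\int_0^\infty e^{-s\varphi(A)}\,\nu_t(ds)=\int_0^\infty\!\!\int_0^\infty e^{-uA}\,\mu_s(du)\,\nu_t(ds)=\int_0^\infty e^{-uA}\,\rho_t(du),
\]
while computing Laplace transforms and using \eqref{CMonG} yields $\widehat{\rho_t}(z)=\int_0^\infty e^{-s\varphi(z)}\,\nu_t(ds)=e^{-t(\psi\circ\varphi)(z)}$ for $z\ge 0$. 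Hence, by the uniqueness in Theorem \ref{Bochner}, $(\rho_t)_{t\ge 0}$ is precisely the subordinator of $\psi\circ\varphi$, so that $e^{-t\psi(\varphi(A))}=e^{-t(\psi\circ\varphi)(A)}$. Consequently $(e^{-t(\psi\circ\varphi)(A)})_{t\ge 0}$ is holomorphic, and since $X$ and $A$ were arbitrary, $\psi\circ\varphi$ is Carasso-Kato. The main obstacle is exactly this associativity/composition identity; once it is in place the conclusion follows at once.
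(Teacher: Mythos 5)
Your proof is correct and follows the same route the paper intends: the paper derives this corollary as an immediate consequence of Corollary \ref{Tangle1} applied to $B=\varphi(A)$, relying implicitly on the subordination composition identity $\psi(\varphi(A))=(\psi\circ\varphi)(A)$ (cf.\ the remark following the corollary, which cites the convolution formula for the composed subordinator). You merely make explicit the Fubini/uniqueness argument that the paper leaves unstated, which is a sound and welcome addition.
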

\begin{remark}
Let $\psi, \varphi \in \mathcal{BF},$ so that
\[
e^{-t\psi(z)}=\int_0^\infty e^{-zs}\,\mu_t(ds),\quad
e^{-t\varphi(z)}=\int_0^\infty e^{-zs}\,\nu_t(ds),\quad  z\ge
0,\quad t\ge 0,
\]
for some $(\mu_t)_{t \ge 0}, (\nu_t)_{t \ge 0} \in \mathcal T.$
Then, according to \cite[Theorem 5.19]{SchilSonVon2010},
\[
e^{-t(\psi\circ \varphi)(z)}= \int_0^\infty
e^{-z\tau}\,\eta_t(d\tau),
\]
where $(\eta_t)_{t\ge 0}\in \mathcal{T}$ is given by a convolution
formula
\begin{equation}\label{eta}
\eta_t(d\tau)=\int_0^\infty \nu_s(d\tau)\,\mu_t(ds).
\end{equation}
Thus, in the situation of Corollary \ref{GK}, the property
(\ref{CK}) implies that $(\eta_t)_{t\ge 0}$  belongs to
$\mathcal{T}_1$.
\end{remark}
\begin{example}\label{Ex}
$a)$\, It was proved in \cite{Carasso} that the function
\[
\varphi(z)=\log(z+1), \qquad z\ge 0,
\]
is  Carasso-Kato. If $-A$ is the generator of a bounded
$C_0$-semigroup, then as $\varphi \in \mathcal{CBF},$ Hirsch's
calculus yields
\[
\log(1+A)x=\int_1^\infty s^{-1}(s+A)^{-1}Ax\,ds,\qquad x\in
\dom(A).
\]
By the improving property of $\varphi,$  $-\log(1+A)$ is the
generator of a bounded holomorphic $C_0$-semigroup $(e^{-t\log(1+A)})_{t \ge
0}$ on $X$ given by
\[
e^{-t\log(1+A)}=(1+A)^{-t}=\int_0^\infty
e^{-sA}e^{-s}\frac{s^{t-1}}{\Gamma(t)}\,ds,\quad t\ge 0,
\]
so that $\varphi\in \mathcal{BF}$ corresponds to the semigroup
\[
\nu_t(ds)=\frac{s^{t-1}e^{-s}}{\Gamma(t)}\,ds, \qquad t>0,
\]
via \eqref{Bochner}. This was proved in \cite{Carasso} with a
quite complicated argument. From Corollary \ref{GK} we infer that for
every $(\mu_t)_{t\ge 0}\in \mathcal{T},$
\[
\eta_t(d\tau)= \left(\int_0^\infty
\frac{\tau^{s-1}e^{-s}}{\Gamma(s)}\,\mu_t(ds)\right)\,d\tau\in
\mathcal{T}_1.
\]

$b)$\, Consider a complete Bernstein function $\varphi(z)=\sqrt z.$ Observe that
\[
e^{-s\varphi(z)}=e^{-sz^{1/2}}=\frac{s}{2\sqrt{\pi}}\int_0^\infty
e^{-zr}\frac{e^{-s^2/4r}}{r^{3/2}}\,dr,
\]
and it easy to check that $\varphi$ is Carraso-Kato, \cite{Carasso}.
By  Corollary \ref{GK}, for each $(\mu_t)_{t\ge 0}\in
\mathcal{T},$
\[
\eta_t(dr)=\frac{1}{2\sqrt{\pi}}\left(\int_0^\infty s
e^{-s^2/4r}\mu_t(ds)\right)\,\frac{dr}{r^{3/2}}\in \mathcal{T}_1.
\]
\end{example}

We proceed with several new conditions for a
function to be Carraso-Kato. Roughly, they say that the function is Carasso-Kato if it  shrinks a large angular
sector to a smaller one.

The first statement provides a geometric condition for a stronger version of the
Carraso-Kato property.
\begin{thm}\label{IlCBF}
Let $\psi$ be  a Bernstein function. Suppose there exist
$\theta_1 \in (0,\pi)$ and $\theta_2  \in (\pi/2, \pi)$ such
that $\psi$ admits a  continuous extension $\tilde{\psi}$ to
$\overline{\Sigma}_{\theta_2}$  which is  holomorphic in
$\Sigma_{\theta_2},$ and
\begin{equation}\label{MCon4}
\tilde{\psi}(\overline{\Sigma}_{\theta_2}^{+})\subset
\overline{\Sigma}_{\theta_1}^{+}.
\end{equation}
Then the following holds.
\begin{itemize}
\item [(i)] One has $\tilde{\psi}\in \mathcal{B}(\Sigma_{\theta_2})$ so that
 if
$A\in \Sect(\gamma)$, $\gamma<\theta_2$, is injective then $\tilde{\psi}(A)$ is well-defined in the (extended) holomorphic functional
calculus.
\item [(ii)] One has
\[
\tilde{\psi}(A)\in \Sect(\omega), \qquad
\omega=\gamma\cdot\frac{\theta_1}{\theta_2}.
\]
In particular, if $0 \gamma\in (0,\frac{\pi\theta_2}{2\theta_1})$ then
\begin{equation}\label{BHH0}
-\tilde{\psi}(A)\in \mathcal{BH}(\theta),\qquad
\theta=\frac{\pi}{2}\left(1-\frac{2\gamma}{\pi}\cdot\frac{\theta_1}{\theta_2}\right).
\end{equation}
\end{itemize}
\end{thm}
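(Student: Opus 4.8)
The plan is to reduce part~(ii) to a statement about a genuine complete Bernstein function obtained from $\tilde\psi$ by a power substitution, and the decisive ingredient throughout is a Phragm\'en--Lindel\"of estimate showing that $\tilde\psi$ contracts arguments proportionally. First I would prove the \emph{angle estimate}
\[
|\arg\tilde\psi(\lambda)|\le\frac{\theta_1}{\theta_2}\,|\arg\lambda|,\qquad \lambda\in\Sigma_{\theta_2},
\]
which upgrades the crude inclusion \eqref{MCon4}. By the reflection $\tilde\psi(\overline\lambda)=\overline{\tilde\psi(\lambda)}$ it suffices to treat $\lambda\in\Sigma_{\theta_2}^{+}$. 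Since $\tilde\psi$ is non-constant and confines its values to the proper cone $\overline{\Sigma}_{\theta_1}^{+}$, the open mapping theorem forbids interior zeros, so a continuous branch of $\log\tilde\psi$ exists on the simply connected region $\Sigma_{\theta_2}^{+}$. Passing to logarithmic coordinates $w=\log\lambda$, the function $u(w):=\arg\tilde\psi(e^{w})=\im\log\tilde\psi(e^{w})$ is harmonic and bounded, with values in $[0,\theta_1]$, on the strip $\{0<\im w<\theta_2\}$; it vanishes on the lower edge (where $\tilde\psi=\psi>0$) and is at most $\theta_1$ on the upper edge. Comparing $u$ with the bounded harmonic function $w\mapsto\theta_1\,\im(w)/\theta_2$ and invoking the maximum principle for bounded harmonic functions on a strip yields $u(w)\le\theta_1\,\im(w)/\theta_2$, i.e. the displayed estimate, whence $\tilde\psi(\overline{\Sigma}_\delta)\subset\overline{\Sigma}_{\delta\theta_1/\theta_2}$ for every $\delta<\theta_2$.

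For part~(i) the behaviour at $0$ is immediate from continuity on $\overline{\Sigma}_{\theta_2}$, so only polynomial growth at infinity is at issue. Here I would introduce the substituted function $\chi(w):=\tilde\psi(w^{\theta_2/\pi})$, $w\in H^{+}$, which is the object driving part~(ii) as well: it is holomorphic on $H^{+}$, has non-negative imaginary part there, because $\tilde\psi(\Sigma_{\theta_2}^{+})\subset\overline{\Sigma}_{\theta_1}^{+}\subset\overline{H^{+}}$, is non-negative on $(0,\infty)$, and has the limit $\chi(0+)=\psi(0+)$; hence $\chi\in\mathcal{CBF}$ by Theorem~\ref{Shill}(iii). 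Since complete Bernstein functions grow at most linearly on sectors, by Lemma~\ref{psi}(ii) together with the Stieltjes representation, one gets $|\chi(w)|\le C_\varepsilon|w|$ on $\overline{\Sigma}_{\pi-\varepsilon}$ for $|w|\ge1$, which transplants to $|\tilde\psi(\lambda)|\le C_\varepsilon|\lambda|^{\pi/\theta_2}$ on sub-sectors of $\Sigma_{\theta_2}$ and gives $\tilde\psi\in\mathcal{B}(\Sigma_{\theta_2})$. The asserted well-definedness of $\tilde\psi(A)$ for injective $A\in\Sect(\gamma)$, $\gamma<\theta_2$, then follows from the extended holomorphic functional calculus.

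For part~(ii) I would exploit the factorisation $\tilde\psi=\chi\circ(\lambda\mapsto\lambda^{\pi/\theta_2})$. Put $B:=A^{\pi/\theta_2}$; since $(\pi/\theta_2)\gamma<\pi$, Proposition~\ref{frpow} gives $B\in\Sect(\pi\gamma/\theta_2)$, and $B$ is injective. Applying the composition rule (Proposition~\ref{frpow3}) to $B$ with exponent $\theta_2/\pi\in(0,1)$ and $f=\tilde\psi\in\mathcal{B}$, and using $B^{\theta_2/\pi}=A$, I obtain $\tilde\psi(A)=(\tilde\psi\circ\lambda^{\theta_2/\pi})(B)=\chi(B)$. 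The angle estimate rewritten for $\chi$ reads $\chi(\overline{\Sigma}_\delta)\subset\overline{\Sigma}_{\delta\theta_1/\pi}$, so $\chi$ is a complete Bernstein function contracting every sector by the factor $\theta_1/\pi<1$; feeding $B\in\Sect(\pi\gamma/\theta_2)$ into the resolvent machinery of Section~\ref{frac} (the argument of Theorem~\ref{LB}, now keeping track of the smaller output angle supplied by the sector contraction of Proposition~\ref{psi1}) yields $\chi(B)\in\Sect\big((\pi\gamma/\theta_2)(\theta_1/\pi)\big)=\Sect(\gamma\theta_1/\theta_2)=\Sect(\omega)$. Thus $\tilde\psi(A)\in\Sect(\omega)$. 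Finally, when $\gamma<\pi\theta_2/(2\theta_1)$ one has $\omega<\pi/2$, and the equivalence $A\in\Sect(\omega)\Leftrightarrow-A\in\mathcal{BH}(\pi/2-\omega)$ gives \eqref{BHH0} with $\theta=\tfrac{\pi}{2}\big(1-\tfrac{2\gamma}{\pi}\tfrac{\theta_1}{\theta_2}\big)$.

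The main obstacle is the angle estimate of the first step: one must justify the existence of a single-valued logarithm, that is, the absence of interior zeros, and, more importantly, apply the maximum principle on the \emph{unbounded} strip, where it holds only because $u$ is bounded, since an unbounded harmonic function could violate the comparison. A secondary delicate point is extracting the \emph{sharp} output angle $\gamma\theta_1/\theta_2$ in part~(ii): the plain angle-preserving bound of Theorem~\ref{LB} does not suffice, and one genuinely needs the sector contraction of $\chi$, hence the first step, inside the resolvent estimates.
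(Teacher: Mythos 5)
Your setup coincides with the paper's: you substitute $\chi(w)=\tilde\psi(w^{\theta_2/\pi})$, recognise $\chi\in\mathcal{CBF}$ via Theorem~\ref{Shill}(iii), deduce $\tilde\psi\in\mathcal{B}(\Sigma_{\theta_2})$ from Lemma~\ref{psi}(ii), and factor $\tilde\psi(A)=\chi(B)$ with $B=A^{\pi/\theta_2}$ using Propositions~\ref{frpow3} and~\ref{frpow}. Up to that point the argument is sound (your Phragm\'en--Lindel\"of angle estimate is correct, though stronger than what is actually needed).

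The gap is in the last step of part~(ii). You assert that because $\chi$ contracts sectors pointwise, ``feeding $B\in\Sect(\pi\gamma/\theta_2)$ into the resolvent machinery of Section~\ref{frac}'' yields $\chi(B)\in\Sect\bigl((\pi\gamma/\theta_2)(\theta_1/\pi)\bigr)$. No result in that section does this. Theorem~\ref{LB} only \emph{preserves} the sectoriality angle ($B\in\Sect(\delta)\Rightarrow\chi(B)\in\Sect(\delta)$); it cannot output a smaller angle from a pointwise range inclusion, and on a general Banach space there is no spectral-mapping or numerical-range principle that converts $\chi(\overline{\Sigma}_\delta)\subset\overline{\Sigma}_{\delta\theta_1/\pi}$ into a resolvent bound outside $\Sigma_{\delta\theta_1/\pi}$. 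Proposition~\ref{psi1}, which you also invoke, is a purely function-theoretic statement about where a CBF maps slightly enlarged sectors; it is used in Theorem~\ref{IlCBF11}, not here, and it likewise produces no operator estimate. The missing idea is the paper's second complete Bernstein function: since $\tilde\psi$ maps into $\overline{\Sigma}_{\theta_1}^{+}$, the power $\varphi:=\chi^{\beta}$ with $\beta=\pi/\theta_1$ maps $H^{+}$ into itself, hence $\varphi\in\mathcal{CBF}$; then Theorem~\ref{frpowH}(ii) (formula \eqref{AB2}) gives the operator identity $\chi(B)=[\varphi(B)]^{1/\beta}$, Theorem~\ref{LB} gives $\varphi(B)\in\Sect(\pi\gamma/\theta_2)$, and Proposition~\ref{frpow} applied to the fractional power $1/\beta$ shrinks the angle to $\gamma\theta_1/\theta_2$. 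Without this factorisation (or a from-scratch contour estimate in the spirit of Theorem~\ref{BTR} exploiting the sector separation, which you do not carry out), the claimed membership $\chi(B)\in\Sect(\omega)$ is unsupported, and this is precisely the crux of the theorem.
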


\begin{proof}
Let
\[
\alpha:=\frac{\theta_2}{\pi}\in (1/2,1),\qquad
\beta:=\frac{\pi}{\theta_1}>1.
\]
Then, by  (\ref{MCon4}), both functions
\[
\tilde{\psi}_\alpha(\lambda):=\tilde{\psi}(\lambda^\alpha),\quad
\varphi(\lambda):=[\tilde{\psi}_\alpha(\lambda)]^\beta,\quad \lambda\in \C\setminus (-\infty,0),
\]
map the upper half-plane $H^{+}$ into itself. Hence, using Theorem
\ref{Shill}, (iii), we conclude that
\begin{equation}\label{Incc}
\tilde{\psi}_\alpha \in \mathcal{CBF},\qquad \varphi\in
\mathcal{CBF}.
\end{equation}
In particular, from the first inclusion in (\ref{Incc}) and Lemma
\ref{psi}, (ii),
it follows that  $\tilde{\psi}\in
\mathcal{B}(\Sigma_{\theta_2}).$ Therefore, as $A\in \Sect(\gamma)$ with $\gamma<\theta_2$, the operator $\tilde{\psi}(A)$ is
well-defined in the (extended) holomorphic functional calculus, and (i) is proved.

Since  $\gamma/\alpha<\pi$,
 Proposition \ref{frpow} implies that
\begin{equation}\label{sectt}
A^{1/\alpha}\in \Sect(\gamma/\alpha).
\end{equation}

Moreover, if $\gamma_0 \in (\gamma,\pi)$ is fixed and $\gamma'$ satisfies
$\gamma<\gamma_0<\gamma'<\pi\alpha$ then using Proposition
\ref{PrBBL10} it follows that
\[
M(A^{1/\alpha},\gamma'/\alpha)= \sup_{z\in
\Sigma_{\pi-\gamma'/\alpha}}\, \|z(z+A^{1/\alpha})^{-1}\|\le
C M(A,\gamma_0),
\]
where the constant $C$ does not depend on $A.$
Recall that the Hirsch functional calculus and the (extended) holomorphic functional calculus are compatible by Proposition \ref{Sovp2}.
Thus, since $\tilde{\psi}$ is complete Bernstein,  by (\ref{AB2}), Theorem \ref{LB} and
(\ref{sectt}),
\begin{equation}\label{this}
\tilde{\psi}_\alpha(A^{1/\alpha})
=[\varphi(A^{1/\alpha})]^{1/\beta}\in \Sect(\gamma/(\alpha
\beta)),
\end{equation}
where the operators are defined by the Hirsch functional calculus.

Furthermore, if $\omega_0 \in (\omega, \pi)$ is fixed and  $\omega'$ satisfies $\omega<\omega_0<\omega'<\pi,$
then Theorem \ref{LB} implies that
\[
\|z(z+\tilde{\psi}_\alpha(A^{1/\alpha}))^{-1}\|\le C
M(A^{1/\alpha},\omega_0),\quad z\in \Sigma_{\pi-\omega'},
\]
where  once again $C$ does not depend on $A.$

Next, as $A$ is injective, we use  Propositions \ref{Sovp2} and
\ref{frpow3} to infer  from (\ref{this})  that
\[
\tilde{\psi}(A)=\tilde{\psi}_\alpha(A^{1/\alpha})\in
\Sect(\omega),\quad
\omega=\frac{\gamma}{\alpha\beta}=\gamma\cdot\frac{\theta_1}{\theta_2},
\]
where $\tilde{\psi}(A)$ is defined by the (extended) holomorphic functional
calculus. In particular, if $\gamma \in (0, \frac{\pi\theta_2}{2\theta_1}$, then
\[
-\tilde{\psi}(A)\in \mathcal{BH}(\theta), \qquad
\theta=\frac{\pi}{2}-\omega=\frac{\pi}{2}
\left(1-\frac{2\gamma}{\pi}\cdot\frac{\theta_1}{\theta_2}\right).
\]
\end{proof}

\begin{cor}\label{IlCBFcor}
Let $\psi$ be  a Bernstein function satisfying the conditions of
Theorem \ref{IlCBF} where, in addition,  $\theta_1<\theta_2$. Let $-A$ be the generator of a bounded
$C_0$-semigroup on $X$. Then
\begin{equation}\label{BHH}
-\psi(A)\in \mathcal{BH}(\theta), \qquad
\theta=\frac{\pi}{2}\left(1-\frac{\theta_1}{\theta_2}\right),
\end{equation}
where $\psi(A)$ is given by  the (extended) Hille-Phillips
functional calculus. Moreover, if $-A\in \mathcal{BH}(\theta_0)$,
$\theta_0\in (0,\pi/2]$, then
\begin{equation}\label{BHHT}
-\psi(A)\in \mathcal{BH}(\theta), \qquad \theta=
\theta_0+\left(\frac{\pi}{2}-\theta_0\right)\left(1-\frac{\theta_1}{\theta_2}\right).
\end{equation}
\end{cor}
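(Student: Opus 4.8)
The plan is to reduce both assertions to Theorem~\ref{IlCBF}(ii) applied to a suitable \emph{injective} operator, and to bridge the two functional calculi involved via Proposition~\ref{Sovp1}. The operator $\psi(A)$ in the conclusion is the Hille--Phillips one, whereas Theorem~\ref{IlCBF} produces the holomorphic-calculus operator $\tilde\psi(\cdot)$; Proposition~\ref{Sovp1} is exactly what identifies them. For \eqref{BHH} a generator $-A$ of a bounded $C_0$-semigroup satisfies $A\in\Sect(\pi/2)$, while for \eqref{BHHT} the hypothesis $-A\in\mathcal{BH}(\theta_0)$ is equivalent to $A\in\Sect(\pi/2-\theta_0)$. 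Writing $\gamma=\pi/2-\theta_0$ (with $\theta_0=0$ meant in the first case), in both situations $\gamma<\theta_2$, and since $\theta_1<\theta_2$ also $\gamma\le\pi/2<\pi\theta_2/(2\theta_1)$, so Theorem~\ref{IlCBF}(ii) applies and returns the sectoriality angle $\omega=\gamma\,\theta_1/\theta_2$.

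Since $A$ need not be injective, I would first pass to the shifts $A_\epsilon:=A+\epsilon$, $\epsilon>0$, which are boundedly invertible, hence injective with dense range. The essential point is uniformity of the constants in $\epsilon$: from $\|e^{-tA_\epsilon}\|=e^{-\epsilon\re t}\|e^{-tA}\|\le\|e^{-tA}\|$ for $t\in\overline{\Sigma}_{\theta_0}$ (using $\re t\ge0$ as $\theta_0\le\pi/2$) and the semigroup/sectoriality correspondence one gets $M(A_\epsilon,\omega_0)\le M(A,\omega_0)$ for all admissible $\omega_0$. Applying Theorem~\ref{IlCBF}(ii) to $A_\epsilon$ gives $\tilde\psi(A_\epsilon)\in\Sect(\omega)$, and the explicit resolvent bounds of Theorem~\ref{LB} (through Proposition~\ref{PrBBL10}) guarantee $\|z(z+\tilde\psi(A_\epsilon))^{-1}\|\le C$ on $\Sigma_{\pi-\omega'}$ uniformly in $\epsilon$, for each $\omega'>\omega$. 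By Proposition~\ref{Sovp1} one has $\tilde\psi(A_\epsilon)=\psi(A_\epsilon)$, so the same uniform bound holds for the Hille--Phillips operators $\psi(A_\epsilon)$.

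It remains to let $\epsilon\to0$. By Theorem~\ref{BochP}(ii) the operator $-\psi(A)$ generates a bounded $C_0$-semigroup, so $(z+\psi(A))^{-1}$ is holomorphic on $\mathbb C_+$, and Proposition~\ref{limit}(iii) yields $(s+\psi(A_\epsilon))^{-1}\to(s+\psi(A))^{-1}$ strongly for $s>0$. For fixed $x\in X$ the maps $z\mapsto(z+\psi(A_\epsilon))^{-1}x$ are holomorphic and, by the uniform bound, locally uniformly bounded on $\Sigma_{\pi-\omega'}$; as they converge on $(0,\infty)$, Vitali's theorem gives locally uniform convergence on the whole sector to a holomorphic limit that agrees with $(z+\psi(A))^{-1}x$ on $\mathbb C_+$. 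Thus $(\cdot+\psi(A))^{-1}$ extends holomorphically and boundedly (by $C/|z|$) to $\Sigma_{\pi-\omega'}$, and \cite[Appendix~B, Proposition~B5]{ABHN01} (exactly as in the proof of Proposition~\ref{SC1}) shows $\Sigma_{\pi-\omega'}\subset\rho(-\psi(A))$ with $\|z(z+\psi(A))^{-1}\|\le C$ there. Letting $\omega'\downarrow\omega$ gives $\psi(A)\in\Sect(\omega)$, i.e. $-\psi(A)\in\mathcal{BH}(\pi/2-\omega)$; the identity $\pi/2-\omega=\frac{\pi}{2}(1-\theta_1/\theta_2)$ for $\gamma=\pi/2$ and $\pi/2-\omega=\theta_0+(\pi/2-\theta_0)(1-\theta_1/\theta_2)$ for $\gamma=\pi/2-\theta_0$ are immediate, matching \eqref{BHH} and \eqref{BHHT}.

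The main obstacle is the passage to the limit. Theorem~\ref{IlCBF} forces us to work with the injective shifts $A_\epsilon$, whereas the conclusion concerns the possibly non-injective operator $\psi(A)$, and only strong resolvent convergence on the positive half-line is at our disposal. Upgrading this to a \emph{uniform bound on a full sector}, which is what encodes the improved holomorphy angle, is the delicate step; it is here that the uniformity of the constants in Theorems~\ref{IlCBF} and~\ref{LB}, together with the Vitali argument and \cite[Proposition~B5]{ABHN01}, is indispensable.
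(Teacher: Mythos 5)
Your proposal is correct and follows essentially the same route as the paper: apply Theorem~\ref{IlCBF} (with $\gamma=\pi/2$, resp.\ $\gamma=\pi/2-\theta_0$) to the injective shifts $A_\epsilon=A+\epsilon$, identify the holomorphic-calculus and Hille--Phillips operators via Proposition~\ref{Sovp1}, exploit the $\epsilon$-uniformity of the sectoriality constants, and pass to the limit using the strong resolvent convergence of Proposition~\ref{limit}(iii) together with Vitali's theorem. The only cosmetic difference is that you derive the uniform bound $M(A_\epsilon,\omega_0)\le M(A,\omega_0)$ from the semigroup rather than directly from the resolvent, which changes nothing of substance.
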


\begin{proof}
If operator $A$ is injective, then (\ref{BHH}) follows from
Theorem \ref{IlCBF} with $\gamma=\pi/2$ and Proposition
\ref{Sovp1}.

Assume now that $A$ is not injective. Then consider the injective
operators $A_\epsilon$ defined by
\[
A_\epsilon:=A+\epsilon,\quad \epsilon>0.
\]
Using once again  Theorem \ref{IlCBF} with  $\gamma=\pi/2$ and
Proposition \ref{Sovp1} we obtain that
\begin{equation}\label{epsP}
-\psi(A+\epsilon)\in \mathcal{BH}(\theta), \quad
\theta=\frac{\pi}{2}\left(1-\frac{\theta_1}{\theta_2}\right).
\end{equation}

Moreover, since for any $\omega'\in (\pi/2,\pi),$
\[
\sup_{z\in \Sigma_{\pi-\omega'}}\,\|z(z+A+\epsilon)^{-1}\|\le \sup_{z\in
\Sigma_{\pi-\omega'}}\,\|z(z+A)^{-1}\|=M(A,\omega'),\quad \epsilon>0,
\]
 the proof of  Theorem \ref{IlCBF} implies that
if $\omega=\pi/2-\theta$ and $ \omega_0\in (\omega,\pi)$  then
\[
\sup_{\epsilon\in (0,1)}\,\sup_{z\in \Sigma_{\pi-\omega_0}}\,
\|z(z+\psi(A_\epsilon))^{-1}\|<\infty.
\]
From this, the resolvent convergence given by (\ref{Psi4}), and
Vitali's theorem it follows that
\[
\sup_{z\in \Sigma_{\pi-\omega_0}}\,
\|z(z+\psi(A))^{-1}\|<\infty,
\]
for all $\omega_0\in (\omega,\pi).$ In other words,  $-\psi(A)\in \mathcal{BH}(\theta)$.

Finally, if $-A\in \mathcal{BH}(\theta_0)$, then $A\in
\Sect(\pi/2-\theta_0)$ and, similarly, using (\ref{BHH0}) with
 $\gamma=\pi/2-\theta_0$, we obtain  (\ref{BHHT}).
\end{proof}

In a manner similar to the proof of Corollary \ref{IlCBFcor},
Theorem \ref{IlCBF} yields also the following assertion providing a geometric
condition for the Carasso-Kato property.
The statement is, in fact, Theorem \ref{BFIntro1} mentioned in Introduction.

\begin{cor}\label{IlCBF1}
Let $\psi$ be a Bernstein function. Suppose there exist $\theta\in
(\pi/2,\pi)$ and $r>0$ such that  $\psi$ admits a continuous
extension $\overline{\Sigma}_\theta$ which is holomorphic in  $\Sigma_\theta$, and
\begin{equation}\label{CKr}
\psi(\lambda)\in \overline{\Sigma}_{\pi/2}^{+}\quad
\mbox{for}\quad \lambda\in \overline\Sigma_\theta^{+},\quad |\lambda|\ge r.
\end{equation}
Then $\psi$ is Carasso-Kato function. Moreover,
for any generator
$A$ of a bounded  $C_0$-semigroup on $X$,
one has $-\psi(A) \in \mathcal H(\frac{\pi}{2}(1-\frac{\pi}{2\theta})).$
\end{cor}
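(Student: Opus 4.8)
The plan is to reduce the statement to Corollary~\ref{IlCBFcor}, whose hypotheses coincide with \eqref{CKr} \emph{except} that Corollary~\ref{IlCBFcor} requires the sector inclusion \eqref{MCon4} on all of $\overline{\Sigma}_\theta^+$, whereas \eqref{CKr} only supplies it for $|\lambda|\ge r$. The whole difficulty is therefore to absorb the lower bound $|\lambda|\ge r$, and the device for this is a shift of the argument: I would replace $\psi$ by $\psi_d:=\psi(\cdot+d)$ for a suitably large $d>0$, apply Corollary~\ref{IlCBFcor} to $\psi_d$, and then transfer the conclusion back to $\psi$ by means of Lemma~\ref{sdvig}.

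First I would record that $\psi_d=\psi(\cdot+d)\in\mathcal{BF}$ for every $d>0$, since $\psi_d'=\psi'(\cdot+d)$ is again completely monotone because $(-1)^k\psi^{(k+1)}(\lambda+d)\ge0$ for $\lambda>0$. Next comes the geometric heart of the argument. Since adding a positive real number to a point of the closed upper half-plane can only decrease its argument, the map $\lambda\mapsto\lambda+d$ satisfies $|\arg(\lambda+d)|\le|\arg\lambda|$; hence it maps $\overline{\Sigma}_\theta$ into itself and $\Sigma_\theta$ into itself, so $\psi_d$ inherits from $\psi$ a continuous extension to $\overline{\Sigma}_\theta$ that is holomorphic in $\Sigma_\theta$, and $\lambda+d\in\overline{\Sigma}_\theta^+$ whenever $\lambda\in\overline{\Sigma}_\theta^+$. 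The point to be checked is the magnitude: the quantity $\min_{\lambda\in\overline{\Sigma}_\theta^+}|\lambda+d|$ is the distance from $-d$ to the convex sector $\overline{\Sigma}_\theta^+$, and a direct computation of the distances to its two bounding rays gives $\min(d,d\sin\theta)=d\sin\theta$ (the perpendicular foot onto the ray of angle $\theta$ lands at $-d\cos\theta>0$, so the minimum is attained on that ray). Consequently, choosing $d\ge r/\sin\theta$ forces $|\lambda+d|\ge r$ for \emph{all} $\lambda\in\overline{\Sigma}_\theta^+$, and then \eqref{CKr} yields $\psi_d(\lambda)=\psi(\lambda+d)\in\overline{\Sigma}_{\pi/2}^+$ for every $\lambda\in\overline{\Sigma}_\theta^+$. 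Thus $\psi_d$ satisfies the hypotheses of Theorem~\ref{IlCBF} with $\theta_1=\pi/2<\theta=\theta_2$, as required in Corollary~\ref{IlCBFcor}.

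With this in hand, Corollary~\ref{IlCBFcor} (case $\theta_1=\pi/2$, $\theta_2=\theta$) gives, for the generator $-A$ of a bounded $C_0$-semigroup, that $-\psi_d(A)\in\mathcal{BH}(\theta_\psi)$ with $\theta_\psi=\frac{\pi}{2}(1-\frac{\theta_1}{\theta_2})=\frac{\pi}{2}(1-\frac{\pi}{2\theta})$. Since the subordinator attached to $\psi_d$ is $e^{-ds}\mu_t(ds)$, where $(\mu_t)_{t\ge0}$ corresponds to $\psi$, Bochner's subordination formula of Theorem~\ref{BochP}(ii) applied to the bounded semigroup $e^{-t(A+d)}=e^{-td}e^{-tA}$ identifies $\psi_d(A)=\psi(A+d)$, so that $-\psi(A+d)\in\mathcal{BH}(\theta_\psi)\subset\mathcal H(\theta_\psi)$. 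Finally Lemma~\ref{sdvig}, with this value of $d$, transfers holomorphicity of angle $\theta_\psi$ from $-\psi(A+d)$ to $-\psi(A)$, giving $-\psi(A)\in\mathcal H(\theta_\psi)$. As this holds for every generator of a bounded $C_0$-semigroup on every Banach space, $\psi$ is Carasso-Kato and the holomorphy angle is exactly $\frac{\pi}{2}(1-\frac{\pi}{2\theta})$.

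The main obstacle is the shift step: one must be sure that translating by $d$ simultaneously (i) keeps $\lambda$ inside the sector $\overline{\Sigma}_\theta^+$ and (ii) pushes it past the radius $r$, uniformly over the whole sector, including arbitrarily small $\lambda$. The distance identity $\min_{\lambda\in\overline{\Sigma}_\theta^+}|\lambda+d|=d\sin\theta$ is precisely what reconciles these two requirements, and it is the only genuinely new ingredient; everything else is an application of the already-established Theorem~\ref{IlCBF}, Corollary~\ref{IlCBFcor} and Lemma~\ref{sdvig}.
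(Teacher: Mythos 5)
Your proposal is correct and follows essentially the same route as the paper: shift to $\psi_d=\psi(\cdot+d)$ so that \eqref{CKr} holds on all of $\overline{\Sigma}_\theta^{+}$, apply Corollary \ref{IlCBFcor} with $\theta_1=\pi/2$, $\theta_2=\theta$, and transfer back via Lemma \ref{sdvig}. The only difference is that you make explicit the quantitative choice $d\ge r/\sin\theta$ coming from $\min_{\lambda\in\overline{\Sigma}_\theta^{+}}|\lambda+d|=d\sin\theta$, which the paper leaves implicit.
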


\begin{proof}
From (\ref{CKr}) it follows  that there exists  $d>0$ such
that function $\psi_d$ given by
\[
\psi_d(\lambda):=\psi(\lambda+d),\quad \lambda \in
\overline{\Sigma}_\theta,
\]
is holomorphic in $\Sigma_\theta$ and continuous in $\overline{\Sigma}_\theta$,  and it satisfies
\begin{equation}\label{CKr1}
\psi_d(\lambda)\in \overline{\Sigma}_{\pi/2}^{(+)}\quad
\mbox{for}\quad \lambda\in \overline{\Sigma}_\theta^{(+)}.
\end{equation}
 Therefore, by Corollary \ref{IlCBFcor}
 with $\theta_2=\theta$ and $\theta_1=\pi/2$, if
$-A$ is the generator of a bounded $C_0$-semigroup then
$-\psi_d(A)=-\psi(A+d)\in \mathcal{BH}(\theta_0),$ where
$$
\theta_0=\frac{\pi}{2}\left(1-\frac{\pi}{2\theta}\right) \in
\left(0,\frac{\pi}{2}\right).
$$
Then, by Lemma \ref{sdvig},
we conclude that $-\psi(A)$ generates a
bounded $C_0$-semigroup possessing holomorphic extension to
$\Sigma_{\theta_0}$.
\end{proof}

Let us recall now the next  result by  Fujita \cite[p. 337 and Lemma 2]{F1}.

\begin{thm}\label{F1}
Let $\alpha\in (0,1)$, $\theta_\alpha=\pi/(1+\alpha)$ and
$\Theta\in (\theta_\alpha,\pi)$ Let $\psi\in \mathcal{BF}$ satisfy
the following conditions:
\begin{itemize}
\item [(A1)]  $\psi$ admits a continuous extension
to $\overline{\Sigma}_{\Theta}$ which is holomorphic in $\Sigma_{\Theta}$.

\item [(A2)] one has
\begin{equation}\label{A2}
\lim_{r\to\infty}\,\frac{\psi(re^{i\theta})}{\psi(r)}=e^{i\alpha\theta}\quad
\mbox{uniformly in}\quad |\theta|\le \theta_0,
\end{equation}
and $\psi$ is regularly varying of order $\alpha$,

\item [(A3)] $\psi(re^{\pm i\theta_\alpha})/r$ are integrable in a right neighborhood of $0$.
\end{itemize}
Then $\psi$ is a Carasso-Kato function.
\end{thm}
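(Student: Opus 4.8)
The plan is to deduce Theorem~\ref{F1} from Corollary~\ref{IlCBF1}; thus it suffices to verify the hypotheses of the latter, i.e. to exhibit an angle $\theta\in(\pi/2,\pi)$ and an $r>0$ such that $\psi$ is holomorphic in $\Sigma_\theta$, continuous on $\overline{\Sigma}_\theta$, and $\psi(\lambda)\in\overline{\Sigma}_{\pi/2}^{+}$ whenever $\lambda\in\overline{\Sigma}_\theta^{+}$ and $|\lambda|\ge r$. The natural choice is $\theta=\theta_\alpha=\pi/(1+\alpha)$. For $\alpha\in(0,1)$ one has $\theta_\alpha\in(\pi/2,\pi)$ and $\theta_\alpha<\Theta$, so by (A1) the function $\psi$ extends continuously to $\overline{\Sigma}_{\theta_\alpha}$ and holomorphically to $\Sigma_{\theta_\alpha}$, which settles the regularity requirement. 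The arithmetic that singles out $\theta_\alpha$ is the identity $\alpha\theta_\alpha=\pi-\theta_\alpha$, whence $\alpha\theta_\alpha<\pi/2$ because $\theta_\alpha>\pi/2$; the surplus $\pi/2-\alpha\theta_\alpha=\theta_\alpha-\pi/2>0$ will provide the room needed below.

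For the upper bound I would use (A2). Writing $\lambda=re^{i\phi}$ and using $\psi(r)>0$, the uniform convergence $\psi(re^{i\phi})/\psi(r)\to e^{i\alpha\phi}$ (for $\phi\in[0,\theta_\alpha]$, a range covered by the limit in (A2)) has values approaching the compact arc $\{e^{i\alpha\phi}:0\le\phi\le\theta_\alpha\}$, which lies on the unit circle with arguments in $[0,\alpha\theta_\alpha]\subset[0,\pi/2)$, hence bounded away from $0$ and from the branch cut. Therefore $\arg\psi(re^{i\phi})=\alpha\phi+o(1)$ uniformly as $r\to\infty$, and exploiting the surplus $\theta_\alpha-\pi/2$ one finds $r_1$ with $\arg\psi(\lambda)<\pi/2$ for all $\lambda\in\overline{\Sigma}_{\theta_\alpha}^{+}$ with $|\lambda|\ge r_1$.

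The genuinely delicate point is the lower bound $\arg\psi(\lambda)\ge0$ (equivalently ${\rm Im}\,\psi\ge0$) on the same region. The asymptotics from (A2) only give $\arg\psi\ge\alpha\phi-o(1)$, which is worthless in a shrinking neighbourhood of the positive real axis. There I would use that $\psi$ is Bernstein and non-constant, so $\psi>0$ and $\psi'>0$ on $(0,\infty)$; consequently $\log\psi$ is holomorphic near $(r,\infty)$ and $\arg\psi(x+iy)=y\,\psi'(x)/\psi(x)+O(y^{2})>0$ for small $y>0$, i.e. ${\rm Im}\,\psi>0$ just above the axis. I would then apply the maximum principle to the harmonic function $\arg\psi={\rm Im}\,\log\psi$ on the truncated sector $\{|\lambda|>r,\ 0<\arg\lambda<\theta_\alpha\}$ (on which $\psi\neq0$ for large $r$, since by (A2) $|\psi(re^{i\phi})|=\psi(r)(1+o(1))\to\infty$): its boundary values vanish on the lower edge, exceed $\alpha\theta_\alpha-o(1)>0$ on the upper edge, and are nonnegative on the bounding arc once $r$ is large, the two regimes $\arg\lambda\ge\delta$ and $\arg\lambda\le\delta$ being covered respectively by (A2) and by the local expansion just described. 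Since $|\psi(\lambda)|\le c_\psi|\lambda|$ by Lemma~\ref{psi}(ii), a Phragm\'en--Lindel\"of argument controls the growth and yields $\arg\psi\ge0$ in the interior. Reconciling the two regimes on the arc and justifying the maximum principle on an unbounded sector is the main obstacle; I expect (A3) to be superfluous for this route, in line with the paper's aim of weakening Fujita's hypotheses.

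Combining the two bounds gives $\psi(\lambda)\in\overline{\Sigma}_{\pi/2}^{+}$ for $\lambda\in\overline{\Sigma}_{\theta_\alpha}^{+}$ with $|\lambda|\ge r:=r_1$, which is precisely condition \eqref{CKr} of Corollary~\ref{IlCBF1} with $\theta=\theta_\alpha$. Hence $\psi$ is a Carasso--Kato function, and in fact $-\psi(A)\in\mathcal{H}\big(\tfrac{\pi}{2}(1-\tfrac{\pi}{2\theta_\alpha})\big)=\mathcal{H}\big(\tfrac{\pi}{4}(1-\alpha)\big)$ for every generator $-A$ of a bounded $C_0$-semigroup on $X$.
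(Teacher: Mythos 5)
Your overall route is exactly the paper's: Theorem~\ref{F1} is deduced from Corollary~\ref{IlCBF1} by verifying \eqref{CKr} on a sector of half-angle $\theta_\alpha$ (the paper's own proof is the short remark following the statement), the arithmetic $\alpha\theta_\alpha=\pi-\theta_\alpha<\pi/2$ is the right observation, the upper bound $\arg\psi\le\pi/2-\varepsilon$ for large $|\lambda|$ does follow from the uniform limit \eqref{A2} as you say, and the remark that (A3) is superfluous agrees with the paper. The resulting angle $\tfrac{\pi}{4}(1-\alpha)$ is also correct.

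The genuine gap is the one you flag yourself and do not close: the lower bound $\mathrm{Im}\,\psi(\lambda)\ge 0$ on the part of $\overline{\Sigma}_{\theta_\alpha}^{+}\cap\{|\lambda|\ge r\}$ near the positive real axis. This is a real issue, not a formality: \eqref{A2} only gives $\arg\psi(re^{i\phi})\ge\alpha\phi-\varepsilon(r)$, which is vacuous for $\phi\lesssim\varepsilon(r)$, and the statement ``$\mathrm{Im}\,\psi\ge0$ just above the axis'' is \emph{false} for general Bernstein functions uniformly in $x$ (for $\psi(z)=1-e^{-z}$ one has $\mathrm{Im}\,\psi(x+iy)=e^{-x}\sin y<0$ for $y\in(\pi,2\pi)$ and \emph{every} $x$), so some quantitative input from the hypotheses is indispensable. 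Your two regimes do not meet: the expansion $\arg\psi(x+iy)=y\psi'(x)/\psi(x)+O(y^2)$ has an $x$-dependent error and controls only a region whose width you have not related to the radius $r(\varepsilon)$ at which \eqref{A2} becomes effective, so on the arc $|\lambda|=r$ there remains an uncontrolled intermediate range of arguments, and the maximum principle then yields only $\arg\psi\ge-C\,\omega(\lambda,E)$ rather than $\ge0$. A clean way to close it: for $x>0$, $y>0$ write $\mathrm{Im}\,\psi(x+iy)=\int_0^y\mathrm{Re}\,\psi'(x+it)\,dt$ and use $\cos(st)\ge1-(st)^2/2$ in the L\'evy--Khintchine formula to get $\mathrm{Re}\,\psi'(x+it)\ge\psi'(x)-\tfrac{t^2}{2}\psi'''(x)\ge\psi'(x)-3t^2\psi(x)/x^3$ by \eqref{Jac2}; since $\sin(sy)\le sy$ gives $\mathrm{Im}\,\psi(re^{i\theta})\le r\sin\theta\,\psi'(r\cos\theta)$, comparing with $\mathrm{Im}\,\psi(re^{i\theta})\ge\tfrac12\sin(\alpha\theta)\psi(r)$ from \eqref{A2} yields $x\psi'(x)\ge c\,\psi(x)$ for large $x$ (alternatively this follows from regular variation and the monotone density theorem), whence $\mathrm{Re}\,\psi'(x+it)>0$ for $0<t<x\sqrt{c/3}$ and therefore $\mathrm{Im}\,\psi>0$ on a full thin subsector $\{0<\arg\lambda<\delta_1\}$ for large $|\lambda|$, which overlaps the range $\arg\lambda\ge\delta_1$ covered by \eqref{A2}. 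To be fair, the paper's own justification is equally silent on this point; but as written your argument does not yet constitute a proof of \eqref{CKr}.
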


Note that Theorem \ref{F1} follows from Corollary \ref{IlCBF1}.
Moreover, Corollary \ref{IlCBF1} shows that one can omit the
condition $(A3)$ and the assumption that $\psi$
is a regularly varying. Indeed  from (A1) ,  (A2) and the properties
\[
\psi(r)>0,\quad r>0,\quad
\alpha\Theta<\alpha\theta_{\alpha}=\frac{\pi\alpha}{1+\alpha}<\pi/2,
\]
it follows that for a large $r>0,$
\[
\psi(\lambda)\in
\overline{\Sigma}_{\alpha\Theta}^{(+)}\subset\overline{\Sigma}_{\pi/2}^{+}\quad
\mbox{for}\quad \lambda\in \overline{\Sigma}_\Theta^{+}(r),
\]
hence (\ref{CKr}) holds.

Now we turn our attention to Carraso-Kato functions $\psi$ which are, in addition,  \emph{complete}
Bernstein functions. As in the situation of Theorem \ref{IlCBF}, we first require $\psi$ to map generators of bounded semigroups
into the generators of \emph{sectorially bounded} holomorphic semigroups.
Such $\psi$ can, in fact, be
\emph{characterized} in an elegant way as the following statement shows. (It corresponds to Theorem \ref{CBFIntro}
from Introduction.)

\begin{thm}\label{IlCBF11}
Let $\psi$ be  a complete Bernstein function and let $\gamma \in
(0,\pi/2)$ be fixed. The the following assertions are equivalent.
\begin{itemize}

\item [(i)]
One has
$$ \psi(\overline{\mathbb C}_+) \subset
 \overline{\Sigma}_\gamma.$$

\item [(ii)]  For each (complex) Banach space $X$ and each generator $-A$ of a bounded
$C_0$-semigroup on $X,$ the operator $-\psi(A)$ belongs to $\mathcal{BH}(\pi/2-\gamma)$.
\end{itemize}
\end{thm}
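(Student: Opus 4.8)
The plan is to prove the two implications separately, with essentially all the work in $(i)\Rightarrow(ii)$; the converse $(ii)\Rightarrow(i)$ reduces to a one-dimensional test. For $(i)\Rightarrow(ii)$ I would feed the hypothesis $\psi(\overline{\mathbb C}_+)\subset\overline{\Sigma}_\gamma$ directly into Proposition \ref{psi1}, which is designed for exactly this configuration: it produces, for every $\theta$ in the nonempty interval $(\pi/2,\theta_0)$ with $\theta_0\in(\pi/2,\pi)$ given by \eqref{CCcot}, the sector inclusion $\psi(\overline{\Sigma}_\theta)\subset\overline{\Sigma}_{\tilde\theta}$ with $\tilde\theta\in(0,\pi/2)$. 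Since $\psi\in\mathcal{CBF}$, Theorem \ref{Shill}(iii) shows $\psi$ maps $H^+$ into $\overline{H^+}$, so this refines to the half-sector inclusion $\psi(\overline{\Sigma}_\theta^{+})\subset\overline{\Sigma}_{\tilde\theta}^{+}$. This is precisely the mapping hypothesis \eqref{MCon4} of Theorem \ref{IlCBF} with $\theta_2=\theta$ and $\theta_1=\tilde\theta$, and the extra requirement $\theta_1<\theta_2$ of Corollary \ref{IlCBFcor} holds because $\tilde\theta<\pi/2<\theta$. Hence Corollary \ref{IlCBFcor} applies: for every generator $-A$ of a bounded $C_0$-semigroup on every Banach space $X$ one has $-\psi(A)\in\mathcal{BH}\bigl(\tfrac{\pi}{2}(1-\tilde\theta/\theta)\bigr)$, with $\psi(A)$ understood in the Hille--Phillips calculus.

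It then remains to pass to the sharp angle by a limiting argument. Reading the formula for $\cot\tilde\theta$ in Proposition \ref{psi1}, as $\theta\to(\pi/2)^{+}$ one has $\sin\theta\to1$ and $|\cos\theta|\to0$, so $\cot\tilde\theta\to\cot\gamma$, that is $\tilde\theta\to\gamma$; consequently $\tfrac{\pi}{2}(1-\tilde\theta/\theta)\to\tfrac{\pi}{2}(1-2\gamma/\pi)=\pi/2-\gamma$. Thus, given any $\theta'<\pi/2-\gamma$, I can pick $\theta>\pi/2$ for which $\tfrac{\pi}{2}(1-\tilde\theta/\theta)>\theta'$; the corresponding membership in $\mathcal{BH}$ furnishes a holomorphic extension of $(e^{-t\psi(A)})_{t\ge0}$ to $\Sigma_{\theta'}$ that is bounded there. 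By uniqueness of holomorphic extensions these patch together as $\theta'\uparrow\pi/2-\gamma$ into a holomorphic extension on $\Sigma_{\pi/2-\gamma}$ bounded on every proper subsector, i.e.\ $-\psi(A)\in\mathcal{BH}(\pi/2-\gamma)$, which is $(ii)$.

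For $(ii)\Rightarrow(i)$ it suffices to test $(ii)$ on scalars. Fix $\lambda_0\in\overline{\mathbb C}_+$, take $X=\mathbb C$ and $A=\lambda_0$; then $-A$ generates the semigroup $(e^{-t\lambda_0})_{t\ge0}$, which is bounded precisely because ${\rm Re}\,\lambda_0\ge0$, and by \eqref{clos} one has $\psi(A)=\psi(\lambda_0)$. Hypothesis $(ii)$ forces the scalar semigroup $(e^{-t\psi(\lambda_0)})$ to lie in $\mathcal{BH}(\pi/2-\gamma)$. For a scalar $z$ the semigroup $(e^{-tz})$ is sectorially bounded holomorphic of angle $\pi/2-\gamma$ exactly when ${\rm Re}\,(e^{i\varphi}z)\ge0$ for all $|\varphi|<\pi/2-\gamma$, equivalently $|\arg z|\le\gamma$; hence $\psi(\lambda_0)\in\overline{\Sigma}_\gamma$. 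As $\lambda_0\in\overline{\mathbb C}_+$ was arbitrary, $(i)$ follows.

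The main obstacle is the endpoint in $(i)\Rightarrow(ii)$: both Proposition \ref{psi1} and Corollary \ref{IlCBFcor} only yield holomorphy angles strictly below $\pi/2-\gamma$ for each admissible $\theta$, so reaching the critical angle $\pi/2-\gamma$ demands the patching argument together with the explicit asymptotics $\tilde\theta\to\gamma$ as $\theta\to(\pi/2)^{+}$. The rest is a direct invocation of the machinery already assembled.
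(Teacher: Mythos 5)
Your proof of $(i)\Rightarrow(ii)$ is exactly the paper's argument: apply Proposition \ref{psi1} to get $\psi(\overline{\Sigma}_\theta^{+})\subset\overline{\Sigma}_{\tilde\theta}^{+}$ for $\theta\in(\pi/2,\theta_0)$, feed this into Corollary \ref{IlCBFcor} with $\theta_2=\theta$, $\theta_1=\tilde\theta$, and then let $\theta\downarrow\pi/2$, using $\cot\tilde\theta\to\cot\gamma$ to push the holomorphy angle up to $\pi/2-\gamma$; the patching step at the end is the same ``$\theta_0$ arbitrarily close to $\pi/2$'' limit the paper invokes, and it is legitimate because $\mathcal{BH}(\beta)$ only requires boundedness on proper subsectors. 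Where you genuinely diverge is the converse: the paper disposes of $(ii)\Rightarrow(i)$ by citing Carasso--Kato's Theorem 4 and its proof, whereas you test $(ii)$ on the scalar operator $A=\lambda_0$ on $X=\mathbb{C}$ and read off $|\arg\psi(\lambda_0)|\le\gamma$ from the elementary fact that $(e^{-tz})$ is sectorially bounded of angle $\pi/2-\gamma$ iff $z\in\overline{\Sigma}_\gamma$. Your route is self-contained and actually cleaner here, since it yields the sharp sector $\overline{\Sigma}_\gamma$ directly (the Carasso--Kato necessary condition naturally produces a shifted sector $\overline{\Sigma}_\gamma-\beta$, which the paper must implicitly sharpen using the full strength of the sectorial boundedness in $(ii)$); the only point to be careful about, which you handle correctly, is that for $\lambda_0\in i\mathbb{R}$ the value $\psi(\lambda_0)$ produced by the L\'evy--Khintchine formula \eqref{clos} is the continuous boundary extension of $\psi$, so the conclusion really covers all of $\overline{\mathbb{C}}_+$.
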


\begin{proof}
The implication (ii) $\Rightarrow$ (i) follows from \cite[Theorem
4]{Carasso} and its proof.
So, it suffices to prove that (i) implies (ii).

Assume that (ii) is true. Then, by  Corollary \ref{IlCBFcor} and
Proposition \ref{psi1}, we obtain that
\begin{equation}\label{LimS}
-\psi(A)\in \mathcal{BH}(\theta), \quad
\theta=\frac{\pi}{2}\left(1-\frac{\tilde{\theta}_0}{\theta_0}\right),
\end{equation}
where $\theta_0\in (\pi/2,\pi)$ is such that
\[
|\cos\theta_0|=\frac{\cot\gamma}{\cot\gamma+1},
\]
and $\tilde{\theta_0}=\tilde{\theta_0}(\theta_0) \in (0,\pi/2),$ is defined by the equation
\[
\cot \tilde{\theta}_0=  C(\theta_0),\qquad
C(\theta):=\frac{\cot\gamma+1}{\sin\theta}\left(\frac{\cot\gamma}{\cot\gamma+1}-|\cos\theta|\right).
\]
Note that
\[
\lim_{\theta_0\to\pi/2}\, C(\theta_0)=\cot \gamma \quad
\]
and therefore
\[
 \lim_{\theta_0\to\pi/2}\,\tilde{\theta}_0(\theta_0)=\gamma.
\]
Thus considering  $\theta_0$ in (\ref{LimS}) arbitrarily close
to $\pi/2$, we obtain the assertion (ii).
\end{proof}

Let us recall that (\ref{run}) is necessary for  $\psi\in
\mathcal{BF}$ to be a Carasso-Kato function. The next
statement show that if moreover $\psi\in \mathcal{CBF}$ then
(\ref{run}) is also sufficient thus providing a characterization of the Carasso-Kato property
for complete Bernstein functions.

\begin{cor}\label{IlCBF12}
Let $\psi$ be  complete Bernstein function. Then $\psi$ is
Carasso-Kato  if and only if there exist $\gamma\in (0,\pi/2)$
and $\beta\ge 0$ such that (\ref{run}) holds. Moreover, if
(\ref{run}) holds and if $-A$ generates  a bounded
$C_0$-semigroup on $X,$ then $-\psi(A) \in \mathcal H(\pi/2-\gamma).$
\end{cor}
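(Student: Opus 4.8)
The plan is to split the equivalence into its two implications, with all the substantive work reduced to Theorem \ref{IlCBF11} by means of a shift in the \emph{value} of $\psi$. The necessity direction carries no new content: if $\psi$ is Carasso-Kato, then the convolution semigroup $(\mu_t)_{t\ge0}$ attached to $\psi$ by Theorem \ref{Bochner} lies in $\mathcal I$, and \cite[Theorem 4]{Carasso} — recorded above in \eqref{CK} and the subsequent line \eqref{run} — guarantees the existence of $\gamma\in(0,\pi/2)$ and $\beta\ge0$ with $\psi(\C_+)\subset\overline{\Sigma}_\gamma-\beta$. I would simply cite this; note it uses neither the completeness of $\psi$ nor Theorem \ref{IlCBF11}.

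For the converse I would assume \eqref{run} and set $\tilde\psi:=\psi+\beta$. The first step is to verify that $\tilde\psi\in\mathcal{CBF}$: writing the Stieltjes representation $\psi(\lambda)=a+b\lambda+\int_{0+}^\infty \lambda\,\sigma(ds)/(\lambda+s)$ from Theorem \ref{Shill}(iv), adding the constant $\beta$ only replaces $a$ by $a+\beta\ge0$, so $\tilde\psi$ has the very same form and is again complete Bernstein. The second step is to reinterpret \eqref{run}: it says exactly $\psi(\lambda)+\beta\in\overline{\Sigma}_\gamma$ for $\lambda\in\C_+$, and by continuity of $\psi$ on $\overline{\C}_+$ this upgrades to $\tilde\psi(\overline{\C}_+)\subset\overline{\Sigma}_\gamma$, which is precisely condition (i) of Theorem \ref{IlCBF11} for $\tilde\psi$. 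Invoking the implication (i)$\Rightarrow$(ii) of that theorem, I obtain that for every Banach space $X$ and every generator $-A$ of a bounded $C_0$-semigroup one has $-\tilde\psi(A)\in\mathcal{BH}(\pi/2-\gamma)$.

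The final step is to transfer this back from $\tilde\psi(A)$ to $\psi(A)$. The constant $\beta$ is regularised trivially, being $\widehat{\beta\delta_0}$, so the sum rule for the (extended) Hille--Phillips calculus yields $\tilde\psi(A)=\psi(A)+\beta I$ with the bounded summand $\beta I$. Since $\beta I$ commutes with the semigroup, this gives $e^{-t\psi(A)}=e^{\beta t}\,e^{-t\tilde\psi(A)}$ for $t\ge0$. As $(e^{-t\tilde\psi(A)})_{t\ge0}$ extends holomorphically to $\Sigma_{\pi/2-\gamma}$ and $z\mapsto e^{\beta z}$ is entire, the product $z\mapsto e^{\beta z}e^{-z\tilde\psi(A)}$ is a holomorphic extension of $(e^{-t\psi(A)})_{t\ge0}$ to $\Sigma_{\pi/2-\gamma}$, whence $-\psi(A)\in\mathcal H(\pi/2-\gamma)$. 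The scalar factor $e^{\beta z}$ may destroy boundedness on subsectors, which is exactly why only membership in $\mathcal H$, and not in $\mathcal{BH}$, can be asserted. Since this holds for every such $A$, the function $\psi$ is Carasso-Kato, completing the proof. The corollary is therefore essentially a reduction to Theorem \ref{IlCBF11}; the only point genuinely requiring care — and the closest thing to an obstacle — is justifying the identities $\tilde\psi(A)=\psi(A)+\beta I$ and $e^{-t\psi(A)}=e^{\beta t}e^{-t\tilde\psi(A)}$ at the level of the extended calculus, which follows from the sum rule together with the commutation of $\beta I$ with the semigroup.
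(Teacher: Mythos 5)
Your proof is correct, and it reduces the statement to Theorem \ref{IlCBF11} just as the paper does, but via a genuinely different shift. You perturb the \emph{value} of $\psi$, setting $\tilde\psi=\psi+\beta$, so that hypothesis (i) of Theorem \ref{IlCBF11} for $\tilde\psi$ is literally the condition \eqref{run}; you then return from $-\tilde\psi(A)\in\mathcal{BH}(\pi/2-\gamma)$ to $-\psi(A)\in\mathcal H(\pi/2-\gamma)$ through the identity $e^{-t\psi(A)}=e^{\beta t}e^{-t\tilde\psi(A)}$, which indeed follows from the sum rule with the bounded summand $\beta I$. The paper instead perturbs the \emph{argument}: it applies Theorem \ref{IlCBF11} to $\psi_\beta(\lambda)=\psi(\lambda+\beta)$ and transfers back via Lemma \ref{sdvig}, i.e.\ by multiplying the semigroup by the commuting bounded semigroup generated by $[\psi(\cdot+\beta)-\psi(\cdot)](A)$. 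The two transfer mechanisms play the same role, but yours is more elementary (a scalar exponential factor rather than Lemma \ref{sdvig} and the commutation of two nontrivial semigroups), and your modification matches the literal form of \eqref{run} --- which constrains $\psi(z)+\beta$, not $\psi(z+\beta)$ --- more directly, since in the paper's route one must additionally check that $\psi(\cdot+\beta)$ maps $\overline{\C}_+$ into $\overline{\Sigma}_\gamma$. Both routes explain in the same way why only membership in $\mathcal H(\pi/2-\gamma)$, not in $\mathcal{BH}(\pi/2-\gamma)$, is claimed: the transferring factor ($e^{\beta z}$ in your case) may destroy sectorial boundedness.
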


\begin{proof}
It is sufficient to show  that (\ref{run}) implies that $\psi$ is
Carasso-Kato. If (\ref{run}) is satisfied,  by  applying Theorem
\ref{IlCBF11} to
 $\psi_\beta\in \mathcal{CBF}$, $\psi_\beta(\lambda):=\psi(\lambda+\beta)$,
we obtain that
\[
-\psi_\beta(A)=-\psi(\beta+A)\in \mathcal{BH}(\theta),\quad
\theta=\pi/2-\gamma.
\]
for any generator $-A$ of a bounded $C_0$-semigroup on $X$. Then, using Lemma \ref{sdvig},
we conclude that $-\psi(A)$ generates a bounded
$C_0$-semigroup having a holomorphic extension to
$\Sigma_{\theta}$.
\end{proof}

\begin{rem}\label{log}
Observe that
$\psi(z)=\log(1+z)$ is complete Bernstein and, moreover,
\[
|{\rm Im}(\psi(z))|\le \pi/2,\qquad z\in \C_{+}.
\]
Thus,
for every  $\gamma\in (0,\pi/2)$ there is  $\beta>0$ such that  $\psi$ satisfies (\ref{run}).
Then, by
Corollary \ref{IlCBF12}, we obtain
that $\psi$ is
Carasso-Kato, and, moreover, for any generator $-A$ of a bounded
$C_0$-semigroup  the semigroup generated by $-\log(A+I)$
admits a holomorphic extension to  $\C_{+}=\Sigma_{\pi/2}$. This fact complements
Example \ref{Ex}, a).
\end{rem}

\section{Appendix}
It is an open question whether for any $\alpha\in
(0,1),$
\begin{equation}\label{Stat1}
\psi \in \mathcal{BF} \Longrightarrow [\psi(\lambda^\alpha)]^{1/\alpha}\in \mathcal{BF}.
\end{equation}
A positive answer to this question would allow one to apply the
methods from \cite{Laub} directly and to obtain the results from
Section \ref{Main1} in a comparatively simple way. Let us analyse
the property \eqref{Stat1} in some more details.

Apart from the situation described in (\ref{Stat}), it is known
that (\ref{Stat1} is true if
$\alpha=1/n, n \in \N$ \cite[Proposition 7.1]{Laub} (see, also
\cite[Remark 12]{PS}). The following Lemma \ref{genG} generalizes
\cite[Proposition 7.1]{Laub} and (\ref{Stat}) in the case when
$\alpha\in (0,1/2]$ and $\psi\in \mathcal{BF}$ and it extends these statements  for any
$\alpha\in (0,1)$ if $\psi$ is a so-called special Bernstein function. Recall that a non-zero Bernstein function $\psi$ is said to be a special Bernstein function, if $z/\psi(z)$ is a Bernstein function.

\begin{prop}\label{PrelP}
Let $\psi$ be a  Bernstein function and let  $\alpha\in (0,1)$.
For $\beta>0$ define
\[
\tilde{\psi}_{\alpha,\beta}(z):=
\left(\frac{\psi(z^{\alpha})}{z^\alpha}\right)^\beta, \qquad z
>0.
\]
Then $ \tilde{\psi}_{\alpha,\beta}$ is complete monotone for all
$\alpha\in (0,1/2]$ and $\beta>0.$ If $\psi$ is a special
Bernstein function  then $ \tilde{\psi}_{\alpha,\beta}$ is
complete monotone for all  $\alpha\in (0,1)$ and $\beta>0.$
\end{prop}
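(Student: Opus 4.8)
The plan is to prove complete monotonicity by establishing the stronger property that $\tilde{\psi}_{\alpha,\beta}$ is \emph{logarithmically completely monotone} (LCM), i.e. that $-(\log \tilde{\psi}_{\alpha,\beta})'$ is completely monotone; I will use the standard facts (see \cite{SchilSonVon2010}) that every LCM function is completely monotone, that the LCM class is closed under arbitrary positive powers, and that both the completely monotone and the LCM classes are stable under composition with a Bernstein function. Since $\tilde{\psi}_{\alpha,\beta}=(\psi(z^\alpha)/z^\alpha)^\beta$, it suffices to show that $z\mapsto \psi(z^\alpha)/z^\alpha$ is LCM, and then all $\beta>0$ follow at once. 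Throughout I use that for a nonzero $\psi\in\mathcal{BF}$ the functions $\psi'$ and $1/\psi$ are completely monotone (the latter because $1/\psi\in\mathcal{P}$), hence so is their product $\psi'/\psi$.

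I would first dispose of the special Bernstein case. Here $\psi^{\ast}(\lambda):=\lambda/\psi(\lambda)\in\mathcal{BF}$ by definition, so $\lambda\mapsto \psi^{\ast}(\lambda^\alpha)\in\mathcal{BF}$ for every $\alpha\in(0,1]$ as a composition of Bernstein functions. Because
\[
\tilde{\psi}_{\alpha,\beta}(z)=\Big(\tfrac{\psi(z^\alpha)}{z^\alpha}\Big)^{\beta}=\big(\psi^{\ast}(z^\alpha)\big)^{-\beta},
\]
and a negative real power $B^{-\beta}$ ($\beta>0$) of any $B\in\mathcal{BF}$ is LCM — indeed $-(\log B^{-\beta})'=\beta\,B'/B$ is completely monotone — it follows that $\tilde{\psi}_{\alpha,\beta}$ is completely monotone for all $\alpha\in(0,1)$ and $\beta>0$.

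For general $\psi$ the function $\psi(\lambda)/\lambda$ is completely monotone but \emph{not} LCM, which is exactly what forces the restriction on $\alpha$. The key reduction is the identity $\tilde{\psi}_{\alpha,\beta}(z)=\tilde{\psi}_{1/2,\beta}(z^{2\alpha})$: since $z\mapsto z^{2\alpha}\in\mathcal{BF}$ precisely when $2\alpha\le 1$, it suffices to handle $\alpha=1/2$ and then compose with the Bernstein function $z^{2\alpha}$. Thus I must show that $g(z):=\psi(\sqrt z)/\sqrt z$ is LCM. A direct computation yields
\[
-(\log g)'(z)=\frac{1}{2\sqrt z}\Big(\frac{1}{\sqrt z}-\frac{\psi'(\sqrt z)}{\psi(\sqrt z)}\Big)=\tfrac12\,h(\sqrt z),\qquad h(w):=\frac{\psi(w)-w\psi'(w)}{w^{2}\psi(w)},
\]
and, since composition with $\sqrt z$ preserves complete monotonicity, the problem reduces to proving that $h$ is completely monotone.

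The crux is the factorisation $h(w)=\dfrac{\eta(w)}{w^{2}}\cdot\dfrac{1}{\psi(w)}$ with $\eta(w):=\psi(w)-w\psi'(w)$. Writing $\psi\sim(a,b,\mu)$ and using $1-(1+x)e^{-x}=\int_0^{x}ue^{-u}\,du$, one gets $\eta(w)=a+w^{2}\int_{0+}^{\infty}\big(\int_0^{s}v e^{-wv}\,dv\big)\mu(ds)$, so that by Tonelli
\[
\frac{\eta(w)}{w^{2}}=\frac{a}{w^{2}}+\int_0^{\infty}v\,e^{-wv}\,\mu\big((v,\infty)\big)\,dv,
\]
which is completely monotone (a nonnegative multiple of $w^{-2}$ plus the Laplace transform of the nonnegative function $v\mapsto v\,\mu((v,\infty))$). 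As $1/\psi$ is completely monotone too, $h$ is a product of completely monotone functions and hence completely monotone, giving that $g$ is LCM and finishing the case $\alpha\in(0,1/2]$. The main obstacle is precisely this last step: for general $\psi$ the naive candidate $\psi(w)/w$ is not LCM (equivalently $\tfrac1w-\psi'/\psi$ need not be completely monotone), and the hypothesis $\alpha\le 1/2$ is exactly what allows one to trade a power $z^{-1/2}$ for the extra factor $w^{-1}$ that turns the problematic expression into the genuinely completely monotone $h$.
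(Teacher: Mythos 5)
Your proof is correct, but it goes by a genuinely different route than the paper's. The paper handles the case $\alpha\in(0,1/2]$ by invoking the Fourati--Jedidi result \cite[Proposition 3.6]{Sonia} that $f_\alpha(z):=z^{1-\alpha}\psi(z^\alpha)\in\mathcal{CBF}$ for $\alpha\le 1/2$, whence $z/f_\alpha(z)=z^\alpha/\psi(z^\alpha)\in\mathcal{CBF}$ and $\tilde\psi_{\alpha,\beta}=[z/f_\alpha(z)]^{-\beta}$ is completely monotone by \cite[Theorem 3.6 (ii)]{SchilSonVon2010}; the special Bernstein case is dispatched exactly as you do, by writing $\tilde\psi_{\alpha,\beta}=[f(z^\alpha)]^{-\beta}$ with $f=z/\psi\in\mathcal{BF}$. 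You instead avoid the complete Bernstein machinery entirely: you reduce $\alpha\in(0,1/2]$ to $\alpha=1/2$ via the identity $\tilde\psi_{\alpha,\beta}(z)=\tilde\psi_{1/2,\beta}(z^{2\alpha})$ (valid since $z^{2\alpha}\in\mathcal{BF}$ exactly when $2\alpha\le1$), and then prove directly that $\psi(\sqrt z)/\sqrt z$ is logarithmically completely monotone by exhibiting $(\psi(w)-w\psi'(w))/w^2=a/w^2+\int_0^\infty v e^{-wv}\mu((v,\infty))\,dv$ as an explicit completely monotone function and multiplying by the potential $1/\psi$. I checked the computation: the identity $-(\log g)'(z)=\tfrac12 h(\sqrt z)$, the Tonelli interchange, and the closure properties you invoke (LCM $\subset$ CM, stability of LCM under positive powers and under composition with Bernstein functions, complete monotonicity of $\psi'$ and $1/\psi$) are all correct and standard. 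What your approach buys is self-containedness — the only external input is textbook material from \cite{SchilSonVon2010}, whereas the paper leans on a nontrivial result from a preprint — and it isolates cleanly why $\alpha\le 1/2$ is the natural threshold (one extra factor $w^{-1}$ is needed to make $1/w-\psi'/\psi$ completely monotone). What the paper's route buys is brevity and the slightly stronger structural information that $z^\alpha/\psi(z^\alpha)$ is actually a complete Bernstein function, not merely that its negative powers are completely monotone.
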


\begin{proof}
If $\psi\in \mathcal{BF}$ and $\alpha\in (0,1/2]$, then by
\cite[Proposition 3.6]{Sonia} one has
\[
f_\alpha(z):=z^{1-\alpha}\psi(z^\alpha)\in \mathcal{CBF},
\]
so that $z/f_\alpha(z)\in \mathcal{CBF}$ and, by \cite[Theorem
3.6, (ii)]{SchilSonVon2010}, for any $\beta>0$,
\[
\left[\frac{\psi(z^\alpha)}{z^\alpha}\right]^\beta=
\left[\frac{z^{1-\alpha}\psi(z^\alpha)}{z}\right]^\beta=
\left[\frac{z}{f_\alpha(z)}\right]^{-\beta}
\]
is complete monotone. Let now $\alpha\in (0,1)$ and $\psi$ be a
special Bernstein function so that $\psi(z)=z/f(z)$, $f\in
\mathcal{BF}$. Then $f(z^\alpha)$ is Bernstein, and by
\cite[Theorem 3.6, (ii)]{SchilSonVon2010} the function
$\tilde{\psi}_{\alpha,\beta}$ given by
\[
\tilde{\psi}_{\alpha,\beta}(z):= [f(z^{\alpha})]^{-\beta}, \qquad
z >0,
\]
is complete monotone for any $\beta>0$.
\end{proof}

\begin{remark}\label{Arem}
 Let us note two partial cases of  Proposition \ref{PrelP}. Let
$\beta=1/\alpha-1$ and
\begin{equation}\label{FunAB}
\tilde{\psi}_{\alpha}(z):=
\left(\frac{\psi(z^\alpha)}{z^\alpha}\right)^{1/\alpha-1},\qquad
\alpha\in (0,1).
\end{equation}
If $\psi\in \mathcal{BF}$ and $\alpha\in (0,1/2]$ then
$\tilde{\psi}_{\alpha}$  is completely monotone.
If  $\psi\in \mathcal{SBF}$ and  $\alpha\in (0,1]$ then
$\tilde{\psi}_{\alpha}$ is completely monotone as well.
\end{remark}

\begin{lemma}\label{genG}
Let $\psi$ be a  Bernstein function and let
\[
\psi_\alpha(z):= [\psi(z^\alpha)]^{1/\alpha},\qquad \alpha\in
(0,1).
\]
If $\alpha\in (0,1/2],$ then
$
\psi_\alpha(z)\in  \mathcal{BF}$. If $\alpha\in (0,1)$ and in addition $\psi$ is a special Bernstein function, then
$
\psi_\alpha$ is a special Bernstein function too.
\end{lemma}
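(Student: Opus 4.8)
The plan is to verify the defining property of a Bernstein function directly. Since $\psi_\alpha\ge 0$ is obvious, by the definition in Section~\ref{Secberfunct} it suffices to prove that $\psi_\alpha'$ is completely monotone, and I would arrange the computation so that this reduces exactly to Proposition~\ref{PrelP}. Differentiating $\psi_\alpha(z)=[\psi(z^\alpha)]^{1/\alpha}$ by the chain rule gives
\[
\psi_\alpha'(z)=z^{\alpha-1}\,\psi'(z^\alpha)\,[\psi(z^\alpha)]^{1/\alpha-1},\qquad z>0.
\]
The key regrouping is to collect the two ``algebraic'' factors: using $1/\alpha-1=(1-\alpha)/\alpha$ and $\alpha-1=-(1-\alpha)$,
\[
z^{\alpha-1}[\psi(z^\alpha)]^{1/\alpha-1}=\left(\frac{\psi(z^\alpha)}{z^\alpha}\right)^{(1-\alpha)/\alpha},
\]
which is precisely the function $\tilde{\psi}_{\alpha,\beta}$ of Proposition~\ref{PrelP} with $\beta=(1-\alpha)/\alpha>0$. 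Hence
\[
\psi_\alpha'(z)=\psi'(z^\alpha)\cdot\tilde{\psi}_{\alpha,(1-\alpha)/\alpha}(z).
\]

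Next I would show that both factors are completely monotone. The factor $\psi'(z^\alpha)$ is the composition of the completely monotone function $\psi'$ with the Bernstein function $z\mapsto z^\alpha$, hence completely monotone (composition of a completely monotone function with a Bernstein function is completely monotone, see \cite{SchilSonVon2010}). The factor $\tilde{\psi}_{\alpha,(1-\alpha)/\alpha}$ is completely monotone by Proposition~\ref{PrelP}: for a general $\psi\in\mathcal{BF}$ this holds when $\alpha\in(0,1/2]$, while for $\psi$ special Bernstein it holds for all $\alpha\in(0,1)$. As the product of two completely monotone functions is completely monotone, $\psi_\alpha'$ is completely monotone, so $\psi_\alpha\in\mathcal{BF}$. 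This already settles the first assertion ($\alpha\in(0,1/2]$), and the $\mathcal{BF}$-membership half of the second assertion ($\alpha\in(0,1)$ with $\psi$ special).

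For the special case it then remains to upgrade ``$\psi_\alpha\in\mathcal{BF}$'' to ``$\psi_\alpha$ is special Bernstein'', i.e.\ to check $z/\psi_\alpha(z)\in\mathcal{BF}$. Here I would use the duality built into the operation $\chi\mapsto\chi_\alpha$. Writing $\psi^{*}(z):=z/\psi(z)$ for the conjugate, a direct computation gives
\[
\frac{z}{\psi_\alpha(z)}=\left(\frac{z^\alpha}{\psi(z^\alpha)}\right)^{1/\alpha}=[\psi^{*}(z^\alpha)]^{1/\alpha}=(\psi^{*})_\alpha(z).
\]
Since $\psi$ is special, $\psi^{*}\in\mathcal{BF}$, and $\psi^{*}$ is itself special because $z/\psi^{*}(z)=\psi(z)\in\mathcal{BF}$. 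Thus $\psi^{*}\in\mathcal{SBF}$, and applying the $\mathcal{BF}$-membership already established (valid for all $\alpha\in(0,1)$ since $\psi^{*}$ is special) to $\psi^{*}$ in place of $\psi$ yields $(\psi^{*})_\alpha\in\mathcal{BF}$, i.e.\ $z/\psi_\alpha\in\mathcal{BF}$. Hence $\psi_\alpha$ is special Bernstein.

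I expect the only point requiring care to be the algebraic regrouping in the first step, namely recognizing that the mixed power $z^{\alpha-1}[\psi(z^\alpha)]^{1/\alpha-1}$ is exactly a $\tilde{\psi}_{\alpha,\beta}$; once this is seen, the argument reduces entirely to Proposition~\ref{PrelP} together with the two standard stability properties of completely monotone functions (closure under products and under composition with Bernstein functions). The distinction between the two ranges $\alpha\le 1/2$ and $\alpha<1$ is then absorbed wholesale into the range of validity of Proposition~\ref{PrelP}, so no further case analysis is needed beyond invoking that proposition and, for the special case, the conjugation identity above.
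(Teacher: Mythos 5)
Your proof is correct and follows essentially the same route as the paper: the chain-rule factorization $\psi_\alpha'(z)=\psi'(z^\alpha)\,\bigl(\psi(z^\alpha)/z^\alpha\bigr)^{1/\alpha-1}$ reducing everything to Proposition~\ref{PrelP} (the paper records the needed case $\beta=1/\alpha-1$ as Remark~\ref{Arem}), and the conjugate identity $z/\psi_\alpha=(\psi^{*})_\alpha$ with $\psi^{*}=z/\psi$ for the special Bernstein case (the paper's $f$ is your $\psi^{*}$). Your write-up of the second half is in fact slightly more explicit than the paper's, which contains a small typo in the definition of $f_\alpha$ but is otherwise the identical argument.
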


\begin{proof}
We have
\begin{equation}\label{GHT}
\psi_\alpha'(z)=\psi'(z^\alpha)\tilde{\psi}_\alpha(z),\qquad z>0,
\end{equation}
where  $\tilde{\psi}_\alpha$ is defined by (\ref{FunAB}). Then,
since $\psi'(z^\alpha)$ is complete monotone (as composition of
Bernstein and complete monotone functions, see \cite[Theorem 3.7]{SchilSonVon2010}),
$\psi_\alpha\in
\mathcal{BF}$ by Remark \ref{Arem}.

If $\psi\in \mathcal{SBF}$ and $\alpha\in (0,1)$ then  $\psi=z/f$ for some  $f\in
\mathcal{SBF}.$ Hence
$
f_\alpha(z):=[f(z^{1/\alpha})]^\alpha\in \mathcal{BF},
$
and then
$
\psi_\alpha(z)=\frac{z}{f_\alpha(z)}\in \mathcal{SBF}.
$
\end{proof}

\begin{example}\label{SpecF}
Note that (\ref{Stat1}) does not imply that $\psi$ belongs to
$\mathcal{SBF}$. For instance,
\[
\psi(z):=1-\frac{1}{(1+z)^2}=\frac{z(2+z)}{(1+z)^2},\qquad z>0,
\]
is Bernstein, while $\psi \not \in \mathcal{SBF}.$ Indeed,
\[
\frac{z}{\psi(z)}=\frac{(1+z)^2}{2+z}=z+\frac{1}{2+z}\not\in\mathcal{BF}.
\]
On the other hand, we have
\[
\psi(z)=\frac{z}{f_1(z)f_2(z)},
\]
where
\[
f_1(z)=1+z\in \mathcal{CBF}, \qquad \text{and} \qquad
f_2(z)=\frac{1+z}{2+z}=1-\frac{1}{2+z}\in \mathcal{BF}.
\]
Thus, for $\alpha\in (0,1),$
\[
([\psi(z^\alpha)]^{1/\alpha})' =\frac{2}{(1+z^\alpha)^3} \cdot
[f_1(z^\alpha)]^{-(1/\alpha-1)}\cdot
[f_2(z^\alpha)]^{-(1/\alpha-1)}
\]
is complete monotone, so that $[\psi(z^\alpha)]^{1/\alpha} \in
\mathcal{BF}$ for any $\alpha\in (0,1)$.
\end{example}
\section{Acknowledgment}

We are grateful to C. J. K. Batty for reading parts of this manuscript.

\end{document}